\theoremstyle{plain}
\newtheorem{sat}{Theorem}[section]
\newtheorem{lem}[sat]{Lemma}
\newtheorem{kor}[sat]{Corollary}
\newtheorem{prop}[sat]{Proposition}
\newtheorem*{defi*}{Definition}
\newtheorem*{bei*}{Example}
\newtheorem*{sat*}{Theorem}
\newtheorem*{kor*}{Corollary}
\newtheorem*{rmk*}{Remark}
\newtheorem{quest}{Question}
\newtheorem*{namedtheorem}{\theoremname}
\newcommand{\theoremname}{testing}
\newenvironment{named}[1]{\renewcommand{\theoremname}{#1}\begin{namedtheorem}}{\end{namedtheorem}}
\theoremstyle{definition}
\let\ssection=\section
\renewcommand{\section}{\setcounter{equation}{0}\ssection}
\renewcommand{\setminus}{{\smallsetminus}}
\theoremstyle{remark}
\newtheorem*{bem}{Remark}
\newcommand{\BC}{\mathbb C}
\newcommand{\BH}{\mathbb H}
\newcommand{\BR}{\mathbb R}
\newcommand{\BD}{\mathbb D}
\newcommand{\BS}{\mathbb S}
\newcommand{\BZ}{\mathbb Z}
\newcommand{\BT}{\mathbb T}
\newcommand{\CC}{\mathcal C}
		\newcommand{\CL}{\mathcal L}
\newcommand{\CM}{\mathcal M}		\newcommand{\CN}{\mathcal N}
		\newcommand{\CP}{\mathcal P}
\newcommand{\D}{\partial}
\def\co{\colon\thinspace}
\DeclareMathOperator{\PSL}{PSL}		
\DeclareMathOperator{\vol}{vol}		
\DeclareMathOperator{\inj}{inj}
\begin{document}

\title{Geometric limits of knot complements}
\author{Jessica S. Purcell}
\author{Juan Souto}
\begin{abstract}
We prove that any complete hyperbolic 3--manifold with finitely
generated fundamental group, with a single topological end, and which
embeds into $\BS^3$ is the geometric limit of a sequence of hyperbolic
knot complements in $\BS^3$. In particular, we derive the existence of
hyperbolic knot complements which contain balls of arbitrarily large
radius. We also show that a complete hyperbolic 3--manifold with two
convex cocompact ends cannot be a geometric limit of knot complements
in $\BS^3$.
\end{abstract}
\maketitle

\section{Introduction}

In this paper we study geometric properties of hyperbolic knot
complements.  Unless explicitly stated, a \emph{knot complement} is
understood to be the complement of a non-trivial knot in the 3--sphere
$\BS^3$.  A non-trivial knot $K\subset \BS^3$ is \emph{hyperbolic} if
its complement $M_K$ admits a complete hyperbolic metric. Many knots
are hyperbolic; in fact, among the 1.701.936 prime knots with 16 or
fewer crossings, all but 32 are hyperbolic \cite{Hoste}. In general,
Thurston proved that every knot $K \subset \BS^3$ which is neither a
torus nor a satellite knot is hyperbolic.

By the Mostow--Prasad theorem, the hyperbolic metric on a hyperbolic
knot complement is unique.  In particular it follows from work of
Gordon and Luecke \cite{gordon-luecke} that the hyperbolic metric on
the complement of a hyperbolic knot in $\BS^3$ is a complete invariant
of the knot. However, it remains a difficult problem to use geometric
arguments in the classification of knots.  One central difficulty is
in distinguishing hyperbolic knot complements from general finite
volume hyperbolic manifolds.

\begin{quest}
Which properties of the hyperbolic metric distinguish knot complements
from other hyperbolic 3--manifolds?
\end{quest}

There are some results suggesting that within the world of finite
volume hyperbolic 3--manifolds, hyperbolic knot complements form a
very special class.  For instance, Reid \cite{Reid-knot} proved that
the complement of the figure-8 knot is the only hyperbolic arithmetic
knot complement.  Similarly, every hyperbolic knot complement admits
infinitely many non-isometric finite degree covers, but at most three
of them are knot complements \cite{gaw}.

In this paper, we investigate the question above by studying geometric
limits of hyperbolic knot complements.  We find that many manifolds
arise as these geometric limits, which implies that knot complements
can admit metrics with unusual and perhaps unexpected geometric
properties.  We also investigate manifolds that cannot arise in this
manner.

Recall that a hyperbolic manifold $M$ is a geometric limit of a
sequence of hyperbolic manifolds $M_i$, if there are basepoints $p\in
M$ and $p_i \in M_i$ such that larger and larger balls about $p$ in
$M$ have, as $i$ tends to $\infty$, better and better almost isometric
embeddings into balls about $p_i$ in $M_i$.  Thus if a manifold is a
geometric limit of knot complements, then there exist hyperbolic knots
whose geometric properties are very close to those of the limiting
manifold.

Our first result asserts that surprisingly many hyperbolic
3--manifolds arise as geometric limits of knot complements.
Specifically, we prove:

\begin{sat}\label{thm:main}
Let $N$ be a complete hyperbolic 3--manifold with finitely generated
fundamental group and a single topological end.  If $N$ is
homeomorphic to a submanifold of $\BS^3$, then $N$ is a geometric
limit of a sequence of hyperbolic knot complements in $\BS^3$.
\end{sat}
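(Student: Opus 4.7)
By the tameness theorem of Agol and Calegari--Gabai, $N$ is the interior of a compact 3-manifold $\bar N$, and the single-end hypothesis forces $\partial \bar N = \Sigma$ to be a single closed surface. The embedding $\bar N \subset \BS^3$ thus yields a topological splitting $\BS^3 = \bar N \cup_{\Sigma} V$, where $V$ is a compact 3-submanifold with $\partial V=\Sigma$. My plan is to combine this splitting with a Klein--Maskit gluing construction along the end of $N$, after reducing to the geometrically finite case.

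I would first reduce to the case when $N$ is geometrically finite. By density-type results in Kleinian group theory, $N$ is a geometric limit of geometrically finite hyperbolic structures $N_k$ on the same underlying topological manifold $\bar N$. A standard diagonal argument then lets me assume $N$ itself is geometrically finite, with conformal boundary a Riemann surface $X$ on $\Sigma$.

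In this geometrically finite case, the construction proceeds as follows. I would produce a sequence of knots $K_n \subset V$ together with geometrically finite hyperbolic structures on the interior of $V \setminus K_n$, whose higher-genus end on $\Sigma$ has conformal boundary $\bar X$ and whose rank-2 end is a cusp at $K_n$. By the Klein--Maskit combination theorem, these structures glue along $\Sigma$ to the hyperbolic structure on $N$, producing a complete hyperbolic structure on $\BS^3 \setminus K_n$ with a single cusp at $K_n$; by Mostow--Prasad rigidity this must be the hyperbolic structure on the knot complement. By arranging for the distance in $V \setminus K_n$ from $\Sigma$ to the cusp at $K_n$ to tend to infinity with $n$, a fixed basepoint $p\in\bar N$ sees arbitrarily large balls in $\BS^3\setminus K_n$ that are almost isometric to balls in $N$, yielding the desired geometric convergence.

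The main obstacle is the existence and control of the geometrically finite hyperbolic structures on $V\setminus K_n$. One must produce, for a well-chosen family of knots $K_n\subset V$, hyperbolic structures on $V\setminus K_n$ whose conformal boundary on $\Sigma$ is the prescribed $\bar X$. This requires an existence result within the deformation space of Kleinian groups for $V\setminus K_n$, relying on Thurston's uniformization of Haken manifolds and a careful analysis of the skinning map when $V$ has compressible boundary. A secondary issue is ensuring $K_n$ is a knot rather than a link, which can typically be arranged by a connected-sum modification inside $V$ that does not affect the limit geometry at the basepoint.
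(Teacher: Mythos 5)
Your skeleton (split $\BS^3=\overline N\cup_\Sigma V$, put knots in $V$, control the geometry near a basepoint in $\overline N$) is in the right spirit, but the central step has a genuine gap. The gluing you propose cannot work as stated: by Mostow--Prasad rigidity the complete hyperbolic structure on $\BS^3\setminus K_n$ is determined by its topology, so you do not get to prescribe that its restriction to the $\overline N$--side be close to a chosen geometrically finite structure by matching conformal boundaries. Klein--Maskit combination requires the two Kleinian groups to share an honest common subgroup with compatibly arranged limit sets, not merely conjugate conformal structures at infinity; and when $\Sigma=\D\overline N$ is compressible in $\overline N$ (the handlebody case, which is unavoidable --- it is needed already for Corollary \ref{large-inj}) there is no quasi-Fuchsian surface subgroup along which to amalgamate and no skinning map in Thurston's sense. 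Worse, even granting hyperbolic structures on $\BS^3\setminus K_n$ containing $\overline N$ $\pi_1$--injectively with a long ``neck,'' the geometric limit seen from a basepoint in $\overline N$ is \emph{not} the geometrically finite manifold $N$: as the gluings get more complicated the end of the $\overline N$--side degenerates in the limit (this is exactly Proposition \ref{prop:gluings} and Proposition \ref{prop:limit-degenerate} of the paper). A geometrically finite structure is too flexible to be pinned down as a geometric limit this way, so your reduction to the geometrically finite case, while legitimate as a reduction, leaves you with the hardest instance and no mechanism to handle it.

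The paper goes in the opposite direction: it reduces (via the strong density theorem, Brooks' circle packings and maximal cusps, and a result of Canary--Culler--Hersonsky--Shalen) to the case where the single end of $N$ is \emph{degenerate}, precisely because degenerate ends are rigid --- the ending lamination theorem together with the ``non-realized implies ending lamination'' criterion forces any sequence of hyperbolic structures on $\overline N$ in which curves converging to the ending lamination are pinched to converge strongly to $N$. The knot complements are then obtained from link complements with four carefully chosen components (two of which bound disjoint disks, one giving acylindricity via a doubly incompressible curve, plus an auxiliary knot) by Dehn filling three of them, using the Hodgson--Kerckhoff quantified filling theorem and control on normalized lengths. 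Your proposed ``connected-sum modification'' to convert a link into a knot would in fact destroy hyperbolicity (composite knots are satellites, so their complements contain essential tori); passing from links to knots is exactly the difficulty that the disjoint-disk and normalized-length bookkeeping in Propositions \ref{prop:links} and \ref{prop:reduction} is designed to overcome.
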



Before moving on, observe that applying Theorem \ref{thm:main} to
$N=\BH^3$ we obtain:

\begin{kor}\label{large-inj}
For every $R>0$ there exists a hyperbolic knot complement $M_K$ and
$x\in M_K$ with injectivity radius $\inj(x,M_K)>R$.
\end{kor}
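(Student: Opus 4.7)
The plan is to apply Theorem \ref{thm:main} with $N=\BH^3$ and then unpack the definition of geometric limit to extract the desired injectivity radius bound.

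First I would verify that $N=\BH^3$ satisfies the hypotheses of Theorem \ref{thm:main}: it is a complete hyperbolic 3--manifold, its fundamental group is trivial (hence finitely generated), it has a single topological end, and since it is homeomorphic to $\BR^3$, it embeds as an open submanifold of $\BS^3$. Thus the theorem produces a sequence of hyperbolic knots $K_i\subset\BS^3$ and basepoints $p\in\BH^3$, $p_i\in M_{K_i}$ such that $(M_{K_i},p_i)$ converges geometrically to $(\BH^3,p)$.

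Next I would exploit the fact that $\BH^3$ has infinite injectivity radius everywhere. Given $R>0$, fix some $R'>R$ (say $R'=2R$). Because the geometric convergence provides, for every $\epsilon>0$ and every $R'>0$, an $i_0$ such that for all $i\geq i_0$ there is a $(1+\epsilon)$--bilipschitz embedding $\phi_i\co B(p,R')\to M_{K_i}$ with $\phi_i(p)=p_i$, any closed geodesic loop through $p_i$ of length at most $R$ would pull back under $\phi_i^{-1}$ to an essential loop in $B(p,R')\subset\BH^3$ of length at most $(1+\epsilon)R<R'$, contradicting the fact that $\BH^3$ is simply connected and has infinite injectivity radius at $p$. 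Therefore $\inj(p_i,M_{K_i})>R$ for all sufficiently large $i$, and taking $M_K=M_{K_i}$ and $x=p_i$ for any such $i$ proves the corollary.

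I do not anticipate any genuine obstacle here, as the corollary is an essentially formal consequence of Theorem \ref{thm:main} together with the definition of geometric convergence; the only care needed is to ensure that $\BH^3$ meets the hypotheses of the theorem and to apply the bilipschitz estimate to a ball of radius comfortably larger than $R$ so that the slight metric distortion does not spoil the conclusion.
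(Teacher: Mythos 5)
Your proposal is correct and follows essentially the same route as the paper: apply Theorem \ref{thm:main} to $N=\BH^3$ and use the $(1+\epsilon)$--bilipschitz embeddings of large balls centered at $p$ to force a large injectivity radius at the basepoints $p_i$. The only quibble is that excluding essential loops through $p_i$ of length at most $R$ yields $\inj(p_i,M_{K_i})>R/2$ rather than $>R$ (the injectivity radius is half the length of the shortest essential geodesic loop), but since $R$ is arbitrary this is harmless and is exactly the normalization the paper itself makes.
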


The statement of Theorem \ref{thm:main} is not optimal.  Namely, we
can construct many other geometric limits of knot complements by
observing that every hyperbolic 3--manifold which is a geometric limit
of manifolds satisfying the conditions in Theorem \ref{thm:main} is
also a geometric limit of hyperbolic knot complements.  For example,
this idea can be used to prove that every hyperbolic manifold $N$
homeomorphic to the trivial interval bundle over a closed surface
which has at least one degenerate end is also a limit of knot
complements.  Again the same reasoning shows that there are hyperbolic
3--manifolds with finitely generated fundamental group and arbitrarily
many ends which are geometric limits of knot complements.  We discuss
these in section \ref{sec:various}.

We then turn to the converse problem, and ask which hyperbolic
3--manifolds cannot be the geometric limit of hyperbolic knot
complements.  Besides obvious topological obstructions, we find there
are some less obvious geometric obstructions.  In particular, we find:

\begin{sat}\label{yair}
Let $M$ be a hyperbolic 3--manifold. If the manifold $M$ has at least
two convex cocompact ends, then $M$ is not the geometric limit of any
sequence of hyperbolic knot complements in $\BS^3$.
\end{sat}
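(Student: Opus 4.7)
The plan is to argue by contradiction. Suppose $M$ has two convex cocompact ends with cross--sections $\Sigma_1,\Sigma_2\subset M$ (closed orientable surfaces of genera $g_j\ge 2$; each $\pi_1(\Sigma_j)\hookrightarrow\pi_1(M)$ is a convex cocompact, hence quasi--Fuchsian, subgroup), and that $M$ is the geometric limit of knot complements $M_i=\BS^3\setminus K_i$. Choose a connected compact submanifold $K\subset M$ whose boundary contains $\Sigma_1\cup\Sigma_2$, and for $i$ large take the almost--isometric embeddings $\phi_i\co K\to M_i$ supplied by the geometric convergence.

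I would first upgrade incompressibility to the target manifolds: by the standard stability of convex cocompact Kleinian representations under Chabauty convergence, the induced homomorphism $\pi_1(\Sigma_j)\to\pi_1(M_i)$ is, for $i$ large, an injection with convex cocompact image, and in particular contains no parabolic element. Hence each $\phi_i(\Sigma_j)\subset M_i$ is an incompressible embedded surface of genus $g_j\ge 2$.

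Next I would analyse the topology of $\phi_i(\Sigma_2)$ inside $\BS^3$. Since $H_2(\BS^3\setminus K_i;\BZ)=0$, the surface $\phi_i(\Sigma_2)$ separates $\BS^3$ into two regions, exactly one of which, call it $U_2$, is disjoint from $K_i$, so $U_2\subset M_i$. Incompressibility in $M_i$ forces incompressibility of $\phi_i(\Sigma_2)$ in $U_2$, and since $\pi_1(\BS^3)=1$ the Loop Theorem then produces a compressing disk on the other side $H_2:=\BS^3\setminus U_2^{\mathrm{int}}$. Iterating compressions and filling the resulting $2$--spheres with $3$--balls via Alexander's theorem exhibits $H_2$ as a genus--$g_2$ handlebody; in particular $\pi_1(H_2)\cong F_{g_2}$ is free of rank $g_2$, and $K_i\subset H_2$.

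The crux is that the incompressible surface $\phi_i(\Sigma_1)$ now sits in the interior of the handlebody $H_2$. The induced map $\pi_1(\Sigma_1)\to\pi_1(H_2)\cong F_{g_2}$ cannot be injective, because a closed orientable surface group of genus $\ge 2$ does not embed in any free group. The Loop Theorem applied in $H_2$ therefore yields an embedded compressing disk $D\subset H_2$ with $\partial D$ an essential curve on $\phi_i(\Sigma_1)$. Since $\phi_i(\Sigma_1)$ is incompressible in $M_i$, the disk $D$ must meet $K_i$. Choosing $D$ to minimize $|D\cap K_i|$ and using that the linking map $\Ker\bigl(\pi_1(\Sigma_1)\to F_{g_2}\bigr)\to H_1(M_i)=\BZ$ has image all of $\BZ$ (otherwise a kernel curve of zero linking with $K_i$ would, after removing canceling intersections by surgery, compress $\phi_i(\Sigma_1)$ inside $M_i$), one arranges $|D\cap K_i|=1$. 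Then $D\setminus N(K_i)$ is an essential annulus in $M_i$ with one boundary component $\partial D$ on $\phi_i(\Sigma_1)$ and the other a meridian of $K_i$ on $\partial N(K_i)$. Its core is a nontrivial element of $\pi_1(\Sigma_1)\subset\pi_1(M_i)$ that is $\pi_1(M_i)$--conjugate to the parabolic meridian $\mu_i$. This contradicts the convex cocompactness of $\pi_1(\Sigma_1)$ from the first step. The main technical obstacle will be the reduction to $|D\cap K_i|=1$: i.e.\ locating a kernel element of linking number $\pm 1$ with $K_i$. If a direct disk--surgery argument is insufficient, an alternative approach is to work with the planar surface $D\setminus N(K_i)$ directly, invoking an annular JSJ--type decomposition in the hyperbolic knot complement $M_i$ to extract a parabolic element of $\pi_1(\Sigma_1)$.
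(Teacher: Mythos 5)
Your overall strategy diverges from the paper's after the common starting point (two surfaces, separation in $\BS^3$, non-injectivity into $\pi_1(\BS^3)=1$), and the step you yourself flag as the ``main technical obstacle'' is a genuine gap that I do not believe can be closed by the means you propose. The parenthetical argument for surjectivity of the linking map is invalid: a nontrivial kernel curve with zero linking number with $K_i$ bounds a disk meeting $K_i$ in algebraically cancelling pairs, but ``removing cancelling intersections by surgery'' means tubing along $K_i$, which raises genus; you obtain an embedded \emph{surface} in $M_i$ with that boundary, not a compressing disk, so there is no contradiction with incompressibility. Even granting a kernel element of linking number one, the Loop Theorem hands you an embedded disk for \emph{some} kernel element with no control on $|D\cap K_i|$, and minimizing geometric intersection gives an arbitrary $k$, not $1$. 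The fallback of working with the planar surface $D\setminus N(K_i)$ directly only shows that $[\partial D]$ is a product of conjugates of meridians, which does not make it parabolic; no annular decomposition extracts an accidental parabolic from this. This is precisely where the paper's proof becomes quantitative and cannot be replaced by topology: it produces (via Lackenby's argument) an incompressible, boundary-incompressible planar surface $X_i$ with $k_i$ boundary components in the CAT($-1$) completion $\overline U_i$ of the complement of a convex core, bounds $\area(X_i)\le 2\pi(k_i-2)$ by Gau\ss--Bonnet, and bounds $\area(X_i)\ge k_i d\delta$ from below by disjoint collars --- the contradiction requires choosing the collar width $d$ large \emph{before} taking $i$ large, which is exactly what geometric convergence supplies. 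The fact that the obstruction is geometric rather than topological (the paper shows many manifolds homeomorphic to $\Sigma_g\times\BR$ \emph{are} limits of knot complements) is a strong indication that a loop-theorem/linking-number argument alone cannot succeed.

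Two further gaps occur earlier. First, the cross-sections of convex cocompact ends need not be incompressible in $M$ (take $M$ to be the interior of a nontrivial compression body with a convex cocompact structure: the exterior end has compressible cross-section), so your opening claim that $\pi_1(\Sigma_j)\hookrightarrow\pi_1(M)$ is a quasi-Fuchsian surface subgroup fails for general $M$ satisfying the hypotheses; the paper never uses $\pi_1$-injectivity of the surfaces, only their local convexity, passing instead to the cover determined by the convex region $V_i$ and to its convex core. Second, the assertion that iterated compression exhibits $H_2$ as a genus-$g_2$ handlebody is unjustified: maximal compression of $\partial H_2$ into $H_2$ yields a compression body over a possibly nonempty surface that is incompressible only into the remaining piece (not into all of $\BS^3$), so no contradiction with the absence of closed incompressible surfaces in $\BS^3$ forces that piece to be a union of balls. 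Since the non-injectivity of $\pi_1(\Sigma_1)\to\pi_1(H_2)$ rests on $\pi_1(H_2)$ being free, this step also needs repair.
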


Recall that if $M$ has two convex cocompact ends, then every regular
neigbhorhood of the convex core $CC(M)$ in $M$ has at least two
compact boundary components, where $CC(M)$ is the smallest closed
totally convex subset of $M$.



\medskip
\noindent {\bf Organization and overview.} The following gives a road
map for this paper.  In section \ref{sec:prelim}, we remind the reader
of basic and not so basic facts on hyperbolic 3--manifolds and their
deformation theory.

The main result of section \ref{sec:kleinian}, namely Proposition
\ref{prop:only-degenerate}, is that any manifold satisfying the
hypotheses of Theorem \ref{thm:main} is the geometric limit of
hyperbolic manifolds which, besides satisfying the same hypotheses,
have the property that their single topological end is degenerate.  We
also prove Proposition \ref{prop:limit-degenerate}: that if $M$ is a
hyperbolic 3--manifold with a degenerate end, and if
$\{M_{\gamma_i}\}$ is any sequence of hyperbolic 3--manifolds
homeomorphic to the interior of $M$ with parabolics $\gamma_i$
converging to the ending lamination of $M$, then $M$ is a geometric
limit of $\{M_{\gamma_i}\}$.

Thus at this step, it remains to show that there are geometric limits
of knot complements homeomorphic to $M$ and where certain curves are
parabolic.  This is done in Proposition \ref{prop:reduction}, in
section \ref{sec:knots}.  We start constructing the desired manifold
as a limit of hyperbolic link complements.  In order to go from link
complements to knot complements, we perform hyperbolic Dehn filling on
certain components.  The links are chosen in such a way that the
slopes of Dehn filling get long.  We derive from a version of Hodgson
and Kerckhoff's quantified Dehn filling theorem that the filled
manifolds have the same geometric limit.

Theorem \ref{thm:main} follows easily from Proposition
\ref{prop:only-degenerate}, Proposition \ref{prop:reduction} and
Proposition \ref{prop:limit-degenerate}.  For the convenience of the
reader, we invert the logical order, deriving Theorem \ref{thm:main}
from these results in section \ref{sec:proof}.

In section \ref{sec:yair} we prove Theorem \ref{yair}.  The proof is
based on the fact that the closure of the complement of the
convex--hull of a subset of $\D_\infty\BH^3$ is a locally
CAT(-1)--manifold.  This fact allows us to adapt an argument due to
Marc Lackenby \cite{Lackenby-attaching}.

Finally, in section  \ref{sec:various} we discuss briefly
Corollary \ref{large-inj} and some other
consequences of Theorem \ref{thm:main} and Theorem \ref{yair}.  We also
propose some questions.
\medskip

\noindent{\bf Acknowledgements.}
The second author would like to thank the Mathematics Institute of
Oxford University for its hospitality while this work was
conceived. We would like to thank Larry Guth, Marc Lackenby and Yair
Minsky for very helpful conversations.

The first author has been partially supported by the Leverhulme trust,
and by NSF grant DMS-0704359. The second author has been partially
supported by NSF grant DMS-0706878 and the Alfred P. Sloan Foundation.


\section{Hyperbolic 3--manifolds and geometric limits}\label{sec:prelim}

Throughout this note we will only consider knot complements in
$\BS^3$.


Under a \emph{hyperbolic 3--manifold} we understand a connected and
orientable, complete Riemannian manifold $M$ of constant curvature
$-1$.  Equivalently, $M$ is isometric to $\BH^3/\Gamma$, where
$\Gamma$ is a discrete, torsion free subgroup of the group of
isometries of hyperbolic 3--space $\BH^3$.

By Mostow--Prasad rigidity, any two homotopy equivalent, finite volume
hyperbolic 3--manifolds are isometric.  On the other hand, if the
volume of $M$ is infinite, then there is a rich and well--developed
deformation theory.

In this section, we review a few aspects of this deformation theory.
We refer to \cite{Japaner} for details of facts which are, by now,
classical, and to \cite{Marden} for more recent results.

\subsection{Tameness}
Given a hyperbolic 3--manifold $M$ with finitely generated fundamental
group, there is an associated compact 3--manifold with boundary
$\overline{M}$, the {\em manifold compactification of $M$}, such that
$M$ is homeomorphic to the interior of $\overline{M}$.  This is the
result of the \emph{tameness theorem}, proved by Agol \cite{Agol}, and
Calegari and Gabai \cite{Calegari-Gabai}. It is well known that
$\overline M$ is unique up to homeomorphism, that every component of
$\D\overline M$ has at least genus $1$ and that $\chi(\D\overline
M)=0$ if and only if $M$ has finite volume.

Recall that a non-trivial element $\gamma \in \pi_1(M)$ is
\emph{parabolic} if it is freely homotopic to curves in $M$ of
arbitrarily short length.  A subsurface $P \subset \D\overline{M}$ is
the \emph{parabolic locus of $M$} provided it satisfies:
\begin{enumerate}
\item $P$ consists of all toroidal components of $\D\overline{M}$ and
a collection of homotopically essential, disjoint, non-parallel
annuli.
\item If $\gamma$ is a homotopically non-trivial curve in $P$, then
any element in $\pi_1(M)$ represented by $\gamma$ is parabolic.
\item If $\gamma \subset M$ represents a parabolic element and
$\gamma_i$ is a sequence of curves freely homotopic to $\gamma$ and
with $\ell_M(\gamma_i) \to 0$, then for every regular neighborhood $U$
of $P$ in $\overline M$, there is $i_U$ with $\gamma_i \subset U$ for
all $i \ge i_U$.
\end{enumerate}
If $\overline M$ is the manifold compactification of the hyperbolic
3--manifold $M$, then any two incompressible subsurfaces $P_1, P_2
\subset \D\overline M$ satisfying conditions (1)--(3) are isotopic to
each other within $\D\overline M$. 

In other words, given a hyperbolic 3--manifold $M$ with finitely
generated fundamental group, $(\overline M, P)$ is uniquely
determined up to homeomorphism of pairs.  In particular, given a
hyperbolic 3--manifold $M$, we will abuse terminology only slightly
when we use the definite article in the sentence: \emph{$\overline M$
is the manifold compactification of $M$ and $P \subset \D\overline M$
is the parabolic locus.}


\begin{defi*}
Let $M$ be a hyperbolic 3--manifold with finitely generated
fundamental group; let $\overline M$ be its manifold compactification
and $P \subset \D\overline M$ its parabolic locus.  The components of
$\D\overline{M} \setminus P$ are the \emph{geometric ends} of $M$.
\end{defi*}

Observe that whenever the parabolic locus of $M$ is empty, then
geometric and topological ends coincide.

\subsection{Convex core}
Let $M = \BH^3/\Gamma$ be a hyperbolic 3--manifold and recall that we
can identify the boundary at infinity of $\BH^3$ with the complex
projective line $\D_\infty\BH^3 = \BC P^1$. Denote by $\Lambda_\Gamma$
the {\em limit set} of $\Gamma$ and let $CH(\Lambda_\Gamma) \subset
\BH^3$ be its convex hull.  The quotient $CC(M) = CH(\Lambda_\Gamma) /
\Gamma$ is the {\em convex core of $M$}.  Equivalently, $CC(M)$ is the
smallest closed, totally convex subset of $M$, i.e. the smallest
closed subset which contains all closed geodesics.  The manifold $M$
is \emph{convex cocompact} if its convex core is compact.  Observe
that if $M$ is convex cocompact then its parabolic locus is empty.

More generally, a hyperbolic 3--manifold $M$ with finitely generated
fundamental group is {\em geometrically finite} if its convex core has
finite volume $\vol(CC(M)) < \infty$.  One important class of
geometrically finite manifolds are the maximal cusps.  A hyperbolic
3--manifold $M$ is said to be a \emph{maximal cusp} if $(\overline{M},
P)$ is such that every component of $\D \overline{M} \setminus P$ is
homeomorphic to a 3--punctured sphere.  Note that if $M$ is a maximal
cusp, then its convex core $CC(M)$ must have totally geodesic
boundary, with each boundary component isometric to the unique
hyperbolic structure on a 3--punctured sphere.

\subsection{Degenerate ends}
Throughout the paper, we will be mostly interested in hyperbolic
3--manifolds which have a single geometric end.  To avoid unnecessary
notation and terminology, the following definition is tailored to the
particular situation of this paper:

\begin{defi*}
Let $M$ be a hyperbolic 3--manifold with finitely generated
fundamental group and a single geometric end. The geometric end of $M$
is said to be \emph{degenerate} if $M$ does not contain any proper
totally convex subset, or equivalently, if $M=CC(M)$.
\end{defi*}

Intuitively, manifolds with degenerate ends are more complicated than
convex cocompact manifolds.  However, the former are much more rigid
than the latter; this is going to be crucial in this paper.  We find
the first manifestation of the rigidity of degenerate ends in the
following result, due to Thurston \cite{thurston} and Canary
\cite{Canary-covering}, which we state in the situation we are
interested in:

\begin{named}{Covering theorem}
Assume that $M$ is a hyperbolic 3--manifold with finitely generated
fundamental group and with a unique geometric end. Assume that the
geometric end of $M$ is degenerate and that $\pi \co M\to N$ is a
Riemannian cover where $N$ has infinite volume.  Then the cover $\pi$
is finite--to--one.
\end{named}

Suppose now that $M$ has a single geometric end which is degenerate.
Then we have an associated \emph{ending lamination} $\lambda$, whose
definition we recall.

We may identify $\overline{M}$ with a submanifold of $M$ such that the
complement $M \setminus \overline{M}$ has a product structure.
Observe that any curve $\gamma \subset M\setminus \overline{M}$
corresponds to a unique homotopy class of curves in $\D\overline{M}$.
Now if the end of $M$ is degenerate, then there is a sequence of
closed geodesics $\gamma_i$ with bounded length, exiting the end,
represented by simple closed curves in $\D\overline{M} \setminus P$.
A subsequence of the curves $\gamma_i$ converges in
$\mathcal{PML}(\D\overline{M} \setminus P)$ to a filling measured
lamination whose support, $\lambda$, does not depend on the sequence
$\gamma_i$.
The lamination $\lambda$ is the \emph{ending lamination} of
$M$.  See Thurston \cite{thurston}, Bonahon \cite{Bonahon86}, or
Canary \cite{Canary-ends} for basic properties of the ending
lamination.

The ending lamination theorem, due to Minsky \cite{ELC1} and Brock,
Canary, and Minsky \cite{ELC2}, asserts that every hyperbolic
3--manifold is determined up to isometry by its topology and end
invariants.  We state this theorem again in the particular case we are
interested in.

\begin{named}{Ending lamination theorem}
Let $\overline{M}$ be a compact 3--manifold with boundary, and $P
\subset \D\overline{M}$ a possibly empty subsurface with
$\D\overline{M} \setminus P$ connected.  Assume that $M$ and $M'$ are
hyperbolic 3--manifolds homeomorphic to the interior of $\overline{M}$
with parabolic locus $P$.  Assume also that the geometric ends of $M$
and $M'$ are degenerate and have the same ending lamination.  Then $M$
and $M'$ are isometric.
\end{named}

\subsection{Doubly incompressible laminations}
Before going any further recall the following definition:

\begin{defi*}
Let $\overline M$ be a compact 3--manifold whose interior admits a
hyperbolic structure.  Let $P \subset \D\overline M$ be a possibly
empty subsurface consisting of all toroidal components of $\D\overline
M$ and a collection of disjoint, non-parallel, homotopically essential
annuli.  We say that $(\overline{M}, P)$ is \emph{acylindrical} if
\begin{enumerate}
	\item every component of $\D\overline{M} \setminus P$ is
	incompressible in $\overline{M}$,
	\item every properly embedded annulus $(A, \D A) \subset
	(\overline{M}, \D\overline{M} \setminus P)$ is isotopic relative to
	the boundary to an annulus contained in the boundary
	$\D\overline{M}$, and
	\item there is no properly embedded M\"obius band with boundary in
	$\D\overline{M} \setminus P$.
\end{enumerate}
\end{defi*}

A generalization of this notion is due to Kim, Lecuire, and Ohshika
\cite{KLO}, who defined a measured lamination $\alpha \in
\CP\CM\CL(\D\overline M \setminus P)$ to be \emph{doubly
incompressible}, if there is $\eta>0$ such that $i(\alpha, \D E) >
\eta$ for $E$ any essential annulus, M\"obius band, or disc.  Observe
that if $\gamma \subset \D\overline M \setminus P$ is doubly
incompressible when considered as an element in $\CP\CM\CL(\D\overline
M \setminus P)$, then $(\overline M, \CN(\gamma) \cup P)$ is
acylindrical; here $\CN(\gamma)$ is a regular neighborhood of $\gamma$
in $\D\overline M$.

The following result, which we state only in the setting we are
interested in, asserts essentially that ending laminations are doubly
incompressible.

\begin{sat}[Canary]\label{Canary-masur}
Assume that a hyperbolic 3--manifold $M$ with finitely generated
fundamental group has a single geometric end and that this end is
degenerate with ending lamination $\lambda$.  If $\alpha$ is any
measured lamination with support $\lambda$, then $\alpha$ is doubly
incompressible.

Moreover, if $\gamma_i$ is any sequence of simple closed curves in
$\D\overline M \setminus P$ converging in $\CP\CM\CL(\D\overline M
\setminus P)$ to $\alpha$, then $(\overline M, \CN(\gamma_i) \cup P)$
is acylindrical for all sufficiently large $i$.  Here, $\CN(\gamma_i)$
is a regular neighborhood of $\gamma_i$ in $\D\overline M$.
\end{sat}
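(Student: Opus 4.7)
The plan is to prove both assertions by contradiction, leveraging the filling property of the ending lamination $\lambda$ together with the Covering Theorem.

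For the first assertion, suppose $\alpha$ is not doubly incompressible. Then there is a sequence of essential discs, annuli, or Möbius bands $E_n \subset \overline M$ with $i(\alpha,\D E_n)\to 0$, and after passing to a subsequence I may assume all $E_n$ are of one type. A Möbius band reduces to an annulus by passing to the orientation double cover of $\overline M$, so I focus on discs and annuli. Rescaling and extracting a projective limit $[\D E_n]\to[\beta]$ in $\CP\CM\CL(\D\overline M\setminus P)$, continuity of the intersection form gives $i(\alpha,\beta)=0$. Since $\lambda$ is filling and minimal, the supports of $\alpha$ and $\beta$ must coincide.

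In the disc case, each $\D E_n$ is a meridian, so $\beta$ lies in the closure of the disc set $\overline{\calD}\subset\CP\CM\CL(\D\overline M\setminus P)$; by the known fact that ending laminations of degenerate ends lie in the Masur domain, $i(\lambda,\mu)>0$ for every $\mu\in\overline{\calD}$, contradicting $i(\alpha,\beta)=0$. In the annulus case, the limit yields an essential annulus $A$ with $i(\alpha,\D A)=0$; since $\lambda$ has positive intersection with every non-peripheral simple closed curve on $\D\overline M\setminus P$, both components of $\D A$ must be parallel to $\D P$, so $A$ is a cusp-to-cusp annulus whose core $c\in\pi_1(M)$ is freely homotopic into $P$. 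I would then pass to the cover $\widehat M\to M$ corresponding to the subgroup generated by $\pi_1(A)$ together with the peripheral subgroups of the cusps joined by $A$. The annulus $A$ lifts to $\widehat M$ as an essential properly embedded annulus, so $\widehat M$ is geometrically infinite with infinite volume and still has a degenerate end arising as the lift of the degenerate end of $M$. The Covering Theorem then forces $\widehat M\to M$ to be finite-to-one, contradicting the fact that the chosen subgroup has infinite index. The main obstacle is precisely this last step: one must verify that the cover carries a degenerate end to which the Covering Theorem applies and that the associated subgroup is of infinite index, both of which require a careful analysis of how $A$ sits relative to the cusps and to the geodesic representatives $\gamma_i\to\lambda$ exiting the end.

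The second assertion is a compactness consequence of the first. Suppose along a subsequence (still called $\gamma_i$) the pair $(\overline M,\CN(\gamma_i)\cup P)$ fails to be acylindrical; then for each $i$ there exists an essential disc, annulus, or Möbius band $F_i\subset\overline M$ with $\D F_i\subset\D\overline M\setminus(\CN(\gamma_i)\cup P)$. Kneser--Haken finiteness, applied to essential surfaces of bounded topological type in $\overline M$, implies that after passing to a further subsequence all $F_i$ lie in a single isotopy class represented by a fixed essential surface $F$. Then $i(\gamma_i,\D F)=0$ for all $i$ in the subsequence, and passing to the projective limit yields $i(\alpha,\D F)=0$, contradicting the first assertion.
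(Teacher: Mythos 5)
There are genuine gaps. Keep in mind that the paper itself does not reprove this theorem: the remark that follows it reduces the first claim to Thurston's work when $\D\overline M\setminus P$ is incompressible and to Canary's theorem that $\lambda$ lies in the Masur domain when it is compressible, and obtains the second claim from the \emph{openness} of the Masur domain. Your disc case is exactly that citation, which is fine. The annulus case is where your argument breaks. The negation of double incompressibility gives a sequence $E_n$ with $i(\alpha,\D E_n)\to 0$; unless this infimum is attained by some fixed $E$, no essential annulus $A$ with $i(\alpha,\D A)=0$ is produced --- the projective limit $\beta$ of the $\D E_n$ is a lamination with support $\lambda$, not the boundary of an annulus, so ``the limit yields an essential annulus $A$'' is a non sequitur. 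The missing sub-case is the dangerous one: in a hyperbolic twisted $I$--bundle over a closed non-orientable surface (connected boundary, a single end, empty parabolic locus) the boundaries of the essential vertical annuli accumulate in $\CP\CM\CL(\D\overline M)$ onto every lamination invariant under the covering involution of $\D\overline M$, so one genuinely has to argue that the ending lamination is not such a limit; this uses the geometry of the degenerate end and is not a formal consequence of $\lambda$ being filling. Moreover, even in the sub-case you do treat, the Covering Theorem step fails for the reason you yourself flag: the cover $\widehat M$ associated to the subgroup generated by $\pi_1(A)$ and the adjacent peripheral subgroups is generated by parabolics and is in general geometrically finite; the degenerate end of $M$ does not lift to it, so the Covering Theorem (whose hypothesis is that the \emph{covering} manifold has a degenerate end) does not apply to $\widehat M\to M$, and no contradiction with infinite index results.

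The proof of the second assertion is also incorrect as written. Kneser--Haken finiteness bounds the number of \emph{disjoint, pairwise non-parallel} essential surfaces; it does not bound the number of isotopy classes of essential discs, annuli or M\"obius bands, which is typically infinite (all meridians of a handlebody, all vertical annuli of an $I$--bundle). Hence you cannot pass to a subsequence of the $F_i$ lying in a single isotopy class, and the contradiction evaporates. The correct route, and the one the paper indicates, is openness: one needs that $\alpha$ has positive intersection with every point of the \emph{closure} in $\CP\CM\CL(\D\overline M\setminus P)$ of the set of projectivized boundaries of essential discs, annuli and M\"obius bands (for the Masur domain this is due to Masur and Otal); then $\gamma_i\to\alpha$ forces $\gamma_i$ to be doubly incompressible for large $i$, and the observation preceding the theorem converts this into acylindricity of $(\overline M,\CN(\gamma_i)\cup P)$. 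Your uniform $\eta$ from the first claim does not by itself give this, since $i(\alpha,\D F_i)\ge\eta$ with $\D F_i$ of unbounded total length says nothing about the projective limit of the $\D F_i$.
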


\begin{bem}
If $\D\overline M \setminus P$ is incompressible, then Theorem
\ref{Canary-masur} follows from the work of Thurston \cite{thurston}.
Canary proved the first claim of Theorem \ref{Canary-masur} if
$\D\overline M$ is compressible and the parabolic locus is empty.  In
fact, in this case he showed that $\lambda$ belongs to the so--called
Masur domain, a subset of the set of doubly incompressible laminations
\cite{Canary-ends}.  Since the Masur domain is open \cite{Masur86,
Otal88}, the second claim follows.  Canary's argument goes through
without any problems in the presence of parabolics.
\end{bem}

\subsection{Pleated surfaces and non-realized laminations}
Assume that $M$ is a hyperbolic 3--manifold with manifold
compactification $\overline M$ and parabolic locus $P$, and let
$\overline S$ be a compact surface with interior $S = \overline S
\setminus \D\overline S$.  A {\em pleated surface} is a map
$$\phi \co S \to M$$ such that there is a finite volume hyperbolic
metric $\sigma$ on $S$ such that the following holds:
\begin{enumerate}
\item The image under $\phi$ of a boundary parallel curve in $S$
represents a parabolic element in $\pi_1(M)$.
\item The map $\phi \co (S,\sigma) \to M$ preserves the lengths of
paths.
\item Every point in $x$ is contained in an arc $\kappa$, geodesic
with respect to $\sigma$, such that the restriction of $\phi$ to
$\kappa$ is an isometric embedding.
\end{enumerate}
See for instance \cite{CEG} for basic properties of pleated surfaces. 

If we fix a homotopy class of maps $[S\to M]$ then we say that a
lamination $\lambda \subset S$ is {\em realized} in $M$ by a pleated
surface $\phi \co S\to M$ in the correct homotopy class if:
\begin{itemize}
\item[(4)] The restriction of $\phi$ to each leaf of $\lambda$ is an
isometric immersion.
\end{itemize}

The following is a very technical result which essentially asserts
that if a filling lamination is not realized, then it is the ending
lamination.

\begin{prop}[Non-realized implies ending lamination]\label{detect-ending}
Assume that $\overline M$ is a compact 3--manifold and $P \subset
\D\overline M$ is a subsurface such that there is some hyperbolic
manifold with manifold compactification $\overline M$ and parabolic
locus $P$.  Assume moreover that $S = \D\overline M \setminus P$ is
connected.

Let $N$ be a hyperbolic 3--manifold and $f \co \overline M \to N$ a
homotopy equivalence mapping each curve in $P$ to a parabolic element
in $\pi_1(N)$.  Assume that that there is a filling doubly
incompressible lamination $\lambda \subset S = \D\overline M
\setminus P$ which is not realized by any pleated surface homotopic
to the restriction of $f$ to $S$.  Then the following holds:
\begin{itemize}
\item $\overline M$ is the manifold compactification of $N$ and $P$
its parabolic locus,
\item the only geometric end of $N$ is degenerate, and has ending
lamination $\lambda$.
\end{itemize}
\end{prop}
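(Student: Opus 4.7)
The strategy is to use the non-realizability of $\lambda$ to force pleated surfaces in the homotopy class of $f|_S$ to escape a geometric end of $N$, and then to recognize that end as degenerate with ending lamination $\lambda$.

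First, pick simple closed curves $\gamma_i \subset S = \D\overline M \setminus P$ with $\gamma_i \to \alpha$ in $\CP\CM\CL(S)$ for some measured lamination $\alpha$ supported on $\lambda$. By Theorem \ref{Canary-masur}, the pair $(\overline M, \CN(\gamma_i) \cup P)$ is acylindrical for all sufficiently large $i$; in particular $\gamma_i$ is non-peripheral in $(\overline M, P)$. Since $f$ is a $\pi_1$-isomorphism sending $P$ to parabolics, $f_*(\gamma_i)$ is non-trivial and non-parabolic in $N$, so it is represented by a closed geodesic $\gamma_i^* \subset N$. A standard construction then yields a pleated surface $\phi_i \co S \to N$ in the homotopy class of $f|_S$ that realizes $\gamma_i$ and maps each boundary curve of $S$ to a parabolic loop.

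Next I would show that any subsequence of $\{\phi_i\}$ eventually leaves every compact subset of $N$. If not, choose basepoints $x_i \in \gamma_i^*$ staying in a fixed compact set and pass to a subsequence along which the hyperbolic structures $\sigma_i$ on $S$ and the pleating loci of $\phi_i$ converge; the standard compactness theorem for pleated surfaces of fixed topological type with prescribed cusp behavior produces a limit pleated surface $\phi_\infty \co S \to N$ still in the homotopy class of $f|_S$. Since $\phi_i(\gamma_i)$ converges in the Hausdorff sense to a geodesic lamination containing $\lambda$, the restriction of $\phi_\infty$ to $\lambda$ is a totally geodesic immersion, so $\phi_\infty$ realizes $\lambda$, contradicting the hypothesis. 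The potential degeneration $\ell_N(\gamma_i^*) \to 0$ is excluded by acylindricity, since such curves would have to become parabolic in the limit, which is forbidden.

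Finally, I would identify the escape end. By tameness, $N$ has a manifold compactification $\overline N$ with parabolic locus $P_N$. Since $f$ is a homotopy equivalence sending $P$ to parabolics, Scott's compact core theorem together with Waldhausen's theorem (applicable because all relevant surfaces are incompressible in the aspherical, irreducible $\overline M$) promote $f$ to an embedding $(\overline M, P) \hookrightarrow (\overline N, P_N)$ onto a relative compact core. The escaping pleated surfaces $\phi_i$, homotopic to $f|_S$, must enter a single geometric end $\CE$ whose corresponding boundary component of $\overline N$ is a copy of $S$; the closed geodesics $\gamma_i^*$ exit $\CE$ while $\gamma_i \to \lambda$, so $\CE$ is degenerate with ending lamination $\lambda$. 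The covering theorem, combined with the fact that $f_*$ is surjective on $\pi_1$, rules out any additional geometric ends, and consequently $(\overline N, P_N) = (\overline M, P)$. The main obstacle is the escape step of the second paragraph: extracting a limit pleated surface that realizes the entire filling lamination $\lambda$ requires simultaneous control of the conformal structure, basepoint, and pleating locus, which is precisely where double incompressibility of $\lambda$ (via Theorem \ref{Canary-masur}) becomes indispensable.
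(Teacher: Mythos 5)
The paper does not prove Proposition \ref{detect-ending}: it is quoted from the literature, attributed to Thurston when $\pi_1(M)$ does not split as a free product, to Kleineidam--Souto \cite{Kleineidam-Souto} when it splits but is not free, and to Namazi--Souto \cite{Namazi-Souto} and Ohshika when it is free. So there is no internal proof to compare against; what can be assessed is whether your sketch would constitute a proof, and it would not in the cases where $S$ is compressible.

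Your outline is essentially Thurston's argument for the incompressible case, and in that setting it is sound in spirit. The genuine gap is the compactness step in your second paragraph. The ``standard compactness theorem for pleated surfaces'' rests on the fact that a simple closed curve which is short in the hyperbolic structure $\sigma_i$ on $S$ must have a short geodesic representative in $N$, which keeps the $\sigma_i$ in a compact part of moduli space as long as the images meet a fixed compact set of the thick part. When $S$ is compressible this fails completely: a meridian of $\overline M$ is null-homotopic in $N$, so nothing prevents $\sigma_i$ from developing arbitrarily short compressible curves while the maps $\phi_i$ still meet a fixed compact set; no limiting pleated surface of the same topological type can then be extracted. Controlling this degeneration using only the double incompressibility of $\lambda$ is precisely the content of \cite{Kleineidam-Souto} and \cite{Namazi-Souto}, and it is not a routine adaptation --- it is the reason the proposition required two further papers after Thurston. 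You correctly flag this as ``the main obstacle,'' but flagging it is not the same as overcoming it. A second, smaller gap: in your final paragraph, Waldhausen's theorem is not available when $\D\overline M$ is compressible, so promoting $f$ to a homeomorphism onto a relative compact core (and hence identifying $(\overline N, P_N)$ with $(\overline M, P)$) needs a different argument in the free and freely decomposable cases; and the covering theorem by itself does not exclude extra geometric ends --- one first needs the homeomorphism $\overline N \cong \overline M$ together with the connectedness of $S$ to draw that conclusion.
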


Proposition \ref{detect-ending} is due to Thurston \cite{thurston} if
$\pi_1(M)$ does not split as a free product.  If $\pi_1(M)$ splits as
a free product but is not free, then Proposition \ref{detect-ending}
is due to Kleineidam and Souto \cite{Kleineidam-Souto}.  The case that
$\pi_1(M)$ is free has been treated by Namazi and Souto
\cite{Namazi-Souto} and Ohshika.

\begin{bem}
Recall that the manifold compactification and the parabolic locus of a
hyperbolic 3--manifold are only determined up to homeomorphism.
Therefore, to be slightly more precise, the statement of Proposition
\ref{detect-ending} should be that there is a homeomorphism $F \co N
\to \overline M \setminus \D\overline M$ that satisfies properties of
the Proposition.  
\end{bem}

\subsection{Geometric limits}
A sequence $(M_i, p_i)$ of pointed hyperbolic 3--manifolds
\emph{converges geometrically}, or equivalently, converges in the
\emph{pointed Gromov--Hausdorff topology}, to a pointed manifold $(M,
p)$ if for every $\epsilon > 0$ and every compact set $K \subset M$
with $p \in K$, there exists $i_{\epsilon, K}$ such that for all $i
\ge i_{\epsilon,K}$ there is a $(1+\epsilon)$--bilipschitz embedding
$$f_i\co (K,p) \hookrightarrow (M_i, p_i).$$

Observe that if $(M_i, p_i)$ converges geometrically to $(M, p)$ and
$q \in M$ is a second base point, then there are points $q_i \in M_i$
such that $(M_i, q_i)$ converges geometrically to $(M, q)$.  On the
other hand, it is not difficult to construct sequences of manifolds
$(M_i)$ and two sequences of base points $p_i, q_i \in M_i$ such that
the $(M_i, p_i)$ and $(M_i, q_i)$ converge geometrically to
non-homeomorphic limits.  This last remark explains the undetermined
article in the following definition:

\begin{defi*}
A (connected) hyperbolic 3--manifold $M$ is a \emph{geometric limit}
of a sequence of hyperbolic manifolds $(M_i)$ if there are base points
$p \in M$ and $p_i \in M_i$ such that $(M_i, p_i)$ converges
geometrically to $(M, p)$.
\end{defi*}

The following is an obvious but extremely useful lemma.

\begin{lem}\label{lemma:silly}
Assume that a manifold $M$ is a geometric limit of a sequence of
hyperbolic 3--manifolds $M_i$ and that each $M_i$ is a geometric limit
of a sequence of hyperbolic knot complements.  Then $M$ is also a
geometric limit of a sequence of hyperbolic knot complements.\qed
\end{lem}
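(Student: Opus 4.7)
The plan is a straightforward diagonal argument directly from the definition of geometric convergence; no deeper tool is needed, since both hypotheses already supply bilipschitz approximations on arbitrarily large balls.

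First, I would fix the data: basepoints $p \in M$ and $p_i \in M_i$ witnessing the geometric convergence $(M_i,p_i) \to (M,p)$, and for each $i$ a sequence of hyperbolic knots $K_{i,j} \subset \BS^3$ with basepoints $q_{i,j} \in M_{K_{i,j}}$ witnessing $(M_{K_{i,j}}, q_{i,j}) \to (M_i, p_i)$ as $j \to \infty$. For each positive integer $n$ let $B_n \subset M$ denote the closed ball of radius $n$ around $p$ and set $\epsilon_n = 1/n$.

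Next, I would perform the diagonal selection. Using the convergence $M_i \to M$, pick $i(n)$ (depending on $n$, and arranged to tend to infinity with $n$) and a $(1+\epsilon_n/3)$--bilipschitz embedding
\[
f_n \co (B_n, p) \hookrightarrow (M_{i(n)}, p_{i(n)}).
\]
The image $f_n(B_n)$ is a compact subset of $M_{i(n)}$ containing $p_{i(n)}$, so by the convergence $M_{K_{i(n),j}} \to M_{i(n)}$ I can pick $j(n)$ and a $(1+\epsilon_n/3)$--bilipschitz embedding
\[
g_n \co (f_n(B_n), p_{i(n)}) \hookrightarrow (M_{K_{i(n),j(n)}}, q_{i(n),j(n)}).
\]
The composition $g_n \circ f_n$ is then a $(1+\epsilon_n/3)^2$--bilipschitz, hence (for $n$ large) a $(1+\epsilon_n)$--bilipschitz, embedding of $(B_n, p)$ into $(M_{K_{i(n),j(n)}}, q_{i(n),j(n)})$ sending $p$ to $q_{i(n),j(n)}$.

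Finally, since every compact $K \subset M$ containing $p$ is contained in some $B_n$, and since $\epsilon_n \to 0$, this shows that $(M_{K_{i(n),j(n)}}, q_{i(n),j(n)})$ converges geometrically to $(M,p)$, exhibiting $M$ as a geometric limit of the knot complements $M_{K_{i(n),j(n)}}$. There is no genuine obstacle: the only minor points to be careful about are that the bilipschitz constants multiply under composition and that the basepoints are transported correctly along the two approximations, both of which are immediate from the definition.
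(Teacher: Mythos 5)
Your diagonal argument is correct and is exactly the standard proof the paper has in mind; the paper itself states this lemma with no written proof, treating it as obvious. The only points needing care --- that bilipschitz constants multiply under composition and that basepoints are carried along --- are precisely the ones you address, so nothing is missing.
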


\subsection{Algebraic limits}
We recall now a second concept of convergence.

Let $M$ be a hyperbolic 3--manifold.  Let $AH(M)$ denote the space of
conjugacy classes of discrete faithful representations of $\pi_1(M)$
into $\PSL_2\BC$.  Give $AH(M)$ the quotient topology induced by the
compact--open topology on the space of discrete faithful
representations.  Convergence of representations in $AH(M)$ is called
\emph{algebraic convergence}.  If $\{\rho_n\}$ converges algebraically
to $\rho$, then the manifold $\BH^3/ \rho(\pi_1(M) )$ is the
\emph{algebraic limit} of the 3--manifolds $\BH^3/ \rho_n(\pi_1(M))$.
See \cite{McMullen} or \cite{biringer-souto} for a description of
algebraic convergence from the point of view of the involved
hyperbolic manifolds.

\begin{named}{Density theorem}
Assume that $\BZ^2 \not\leq \pi_1(M)$.  Then the set of those
$\rho\in AH(M)$ such that the associated manifold $\BH^3 /
\rho(\pi_1(M))$ is convex cocompact is dense in $AH(M)$.
\end{named}

The density theorem follows from the proof of the tameness conjecture
by Agol, Calegari and Gabai, the proof of the ending lamination
theorem by Brock, Canary, and Minsky, and the work of Kim, Lecuire and
Ohshika.  The final step needs the more general form of Proposition
\ref{detect-ending} above due to Namazi, Ohshika and Souto.  See
\cite{Marden} for the relation between all these results.
\medskip

Before moving on we state a very weak form of the continuity of
Thurston's length function (see Brock \cite{Brock-length} for an extensive
discussion of the length function).

\begin{prop}\label{realized-continuity}
Assume that $\overline M$ is a compact 3--manifold and $P \subset
\D\overline M$ is a subsurface such that there is some hyperbolic
manifold with manifold compactification $\overline M$ and parabolic
locus $P$.  Assume moreover that $S = \D\overline M \setminus P$ is
connected and let $\lambda\subset S$ be a filling doubly
incompressible lamination.

Let $\{M_i\}$ be a sequence of hyperbolic 3--manifolds in
$AH(\overline M)$ such that every curve in $P$ is parabolic in $M_i$
for all $i$ and assume that the sequence $\{M_i\}$ converges
algebraically to some manifold $M$ in which $\lambda$ is realized.
Then we have
$$\lim_i l_{M_i}(\gamma_i)=\infty$$ for every sequence of simple
closed curves $\{\gamma_i\}$ in $S$ converging to $\lambda$ in
$\CP\CM\CL(S)$.  Here $l_{M_i}(\gamma_i)$ is the infimum of the
lengths in $M_i$ of all curves freely homotopic to $\gamma_i$.
\end{prop}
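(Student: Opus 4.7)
The plan is to argue by contradiction. Suppose the conclusion fails; after passing to a subsequence we may assume $\ell_{M_i}(\gamma_i) \le L$ for some $L < \infty$. For each $i$, I would choose a pleated surface
\[
f_i \co (S, \sigma_i) \to M_i
\]
in the homotopy class induced by $S \hookrightarrow \overline M \to M_i$ whose pleating locus contains $\gamma_i$. Then $\ell_{\sigma_i}(\gamma_i) = \ell_{M_i}(\gamma_i) \le L$. Since $\gamma_i \to \lambda$ in $\CP\CM\CL(S)$, I would choose scalars $c_i \to \infty$ and pass to a subsequence so that $\gamma_i/c_i \to \mu$ in $\CM\CL(S)$, where $\mu$ is a nonzero measured lamination with support $\lambda$.

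Next I would extract a limit pleated surface into the algebraic limit $M$. By Theorem \ref{Canary-masur}, the pair $(\overline M, \CN(\gamma_i) \cup P)$ is acylindrical for all sufficiently large $i$. This acylindricity, combined with the bound $\ell_{\sigma_i}(\gamma_i) \le L$ and the fact that $\lambda$ is filling, rules out the degenerations that could otherwise arise: any curve disjoint from $\gamma_i$ pinching in $\sigma_i$ would, via the length-preserving property of pleated surfaces, become parabolic in the algebraic limit $M$, producing an essential annulus in $\overline M$ disjoint from $\gamma_i$ and contradicting acylindricity. After choosing base points on $\gamma_i$ and passing to a further subsequence, the equivariant lifts $\widetilde f_i \co \widetilde S \to \BH^3$ converge on compacta to a $\rho|_{\pi_1(S)}$--equivariant pleated map, where $\rho$ is the representation giving the algebraic limit $M$. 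This yields a pleated surface $f_\infty \co (S, \sigma_\infty) \to M$ whose pleating locus is the Hausdorff limit of those of the $f_i$ and therefore contains $\lambda$; in particular $f_\infty$ realizes $\mu$.

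Finally I would derive the contradiction as a length comparison. Since $f_\infty$ realizes $\mu$,
\[
\ell_M(\mu) \;=\; \ell_{\sigma_\infty}(\mu) \;=\; \lim_i \ell_{\sigma_i}\!\left(\gamma_i/c_i\right) \;=\; \lim_i \frac{\ell_{\sigma_i}(\gamma_i)}{c_i} \;\le\; \lim_i \frac{L}{c_i} \;=\; 0,
\]
where the second equality uses continuity of the length function on $\CM\CL(S)$ over Teichm\"uller space applied to the convergence $(S, \sigma_i) \to (S, \sigma_\infty)$. On the other hand, the hypothesis that $\lambda$ is realized in $M$ forces $\ell_M(\nu) > 0$ for every nonzero measured lamination $\nu$ with support $\lambda$, giving $\ell_M(\mu) > 0$ and the desired contradiction. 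The hard part is the middle paragraph: ensuring that the limit pleated surface lies in the algebraic limit $M$ itself, rather than in some strictly larger geometric limit, and that its pleating locus still carries $\lambda$. This is where the full strength of the doubly incompressibility hypothesis, through Theorem \ref{Canary-masur}, is essential.
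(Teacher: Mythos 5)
A preliminary remark on the comparison you asked for: the paper does not prove Proposition \ref{realized-continuity} at all. It is stated as ``a very weak form of the continuity of Thurston's length function'' with a pointer to Brock \cite{Brock-length}. The implicit argument is the \emph{forward} one, i.e.\ Thurston's structural stability of realizations: since $\lambda$ is realized in $M$, a train track carrying $\lambda$ has nearly straight image in $M$; this is an open condition controlled by finitely many group elements, so algebraic convergence makes the same train track nearly straight in $M_i$ for large $i$, whence any carried curve has geodesic length in $M_i$ at least a uniform constant times its total weight. As $\gamma_i\to\lambda$ in $\CP\CM\CL(S)$ the weights blow up, so $l_{M_i}(\gamma_i)\to\infty$. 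Your proof therefore necessarily takes a different route, and unfortunately it has a genuine gap.

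The gap sits exactly where you flag ``the hard part'', and it is not repaired by the ingredients you list. Your contradiction hypothesis gives only $l_{M_i}(\gamma_i)\le L$; since the word length of $\gamma_i$ tends to infinity, algebraic convergence $\rho_i\to\rho$ gives no control whatsoever on $\rho_i(\gamma_i)$, so the axes of $\rho_i(\gamma_i)$ --- hence the pleated surfaces $f_i$ --- may exit every compact set of $M_i$ relative to the algebraic framing, and the induced metrics $\sigma_i$ may leave every compact set of Teichm\"uller space by converging to $[\lambda]$ in Thurston's boundary \emph{without any curve pinching}. This is precisely what happens when $\lambda$ is an ending lamination, which is the case in which the paper actually applies the proposition (Proposition \ref{prop:limit-degenerate}, where $l_{M_n}(\gamma_n)=0$); so any correct proof must use the hypothesis that $\lambda$ is realized in $M$ \emph{before} passing to the limit, not merely at the end to get $\ell_M(\mu)>0$. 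Your mechanism for excluding degeneration addresses only pinching, and even that step is incorrect: a curve disjoint from $\gamma_i$ becoming parabolic does not produce a properly embedded essential annulus in $(\overline M,\CN(\gamma_i)\cup P)$ --- acylindricity is a topological condition on the pair and is perfectly compatible with extra parabolics (maximal cusps of handlebodies, for instance). The honest way to exclude a pinching curve $c$ disjoint from $\gamma_i$ is $i(c,\mu)=\lim_i i(c,\gamma_i/c_i)=0$ against $\mu$ filling, and even that requires the pinching curves to stabilize. Two smaller points: if $\gamma_i$ is parabolic in $M_i$ (allowed under $l_{M_i}(\gamma_i)\le L$, and the case of interest), no pleated surface realizes it, so your $f_i$ need not exist; and since $S$ may be compressible, the acylindricity of $(\overline M,\CN(\gamma_i)\cup P)$ for large $i$ should be deduced from the openness of double incompressibility (as in the remark following Theorem \ref{Canary-masur}) rather than from that theorem itself, which concerns ending laminations.
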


\subsection{Strong limits}
Algebraic limits are not necessarily the same as geometric limits, see
for example \cite[Chapter 9]{thurston}.

\begin{defi*}
Suppose $\{M_n\}$ is a sequence of hyperbolic 3--manifolds that
converges algebraically and geometrically to the manifold $M$.  Then
we say $\{M_n\}$ \emph{converges strongly} to $M$.
\end{defi*}

In the course of the proof of Theorem \ref{thm:main} we will need to
be able to deduce that some algebraically convergent sequences also
converge strongly.  Our main tool is the following result which follows
easily from the Canary--Thurston covering theorem.

\begin{prop}\label{alg-strong}
Assume that a sequence $\{\rho_i\}$ in $AH(M)$ converges algebraically
to some $\rho \in AH(M)$ and suppose that $CC(\BH^3 / \rho(\pi_1(M)))
= \BH^3/\rho(\pi_1(M))$.  Then the sequence $\{\rho_i\}$ converges
strongly to $\rho$.
\end{prop}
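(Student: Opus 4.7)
The plan is to pass to a geometric subsequential limit, construct the associated locally isometric covering of hyperbolic manifolds, and then apply the Canary--Thurston covering theorem. After extracting a subsequence, the Kleinian groups $\Gamma_i = \rho_i(\pi_1(M))$ converge geometrically (in the Chabauty topology) to some Kleinian group $\Gamma_\infty$; standard facts about the relationship between algebraic and geometric convergence yield $\Gamma := \rho(\pi_1(M)) \subseteq \Gamma_\infty$, together with a locally isometric covering map $\pi \colon N \to N_\infty$, where $N = \BH^3/\Gamma$ is the algebraic limit and $N_\infty = \BH^3/\Gamma_\infty$ is the geometric limit. Since strong convergence is equivalent to $\pi$ being an isometry, it suffices to show that $\Gamma = \Gamma_\infty$ for every such subsequential geometric limit.

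By the tameness theorem the manifold $N$ is topologically tame, and the hypothesis $CC(N) = N$ forces $N$ to have no convex cocompact ends: either $N$ has finite volume, or every end of $N$ is degenerate. I would then apply the Canary--Thurston covering theorem to $\pi \colon N \to N_\infty$; in the general form due to Canary this rules out $\pi$ being infinite-to-one, since an infinite cover would require $N$ to contain a geometrically finite end mapping finite-to-one onto a neighbourhood of an end of $N_\infty$, contradicting the absence of convex cocompact ends in $N$. Hence $\pi$ is finite-to-one, and in particular $\Gamma$ has finite index in $\Gamma_\infty$.

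The main obstacle is to improve ``finite-to-one'' to ``degree one''. The standard argument uses the description of accidental elements in the geometric limit: every $g \in \Gamma_\infty \setminus \Gamma$ is realised as a limit $g = \lim_i \rho_i(h_i)$ where the sequence $\{h_i\} \subset \pi_1(M)$ leaves every finite subset of the abstract group. A careful analysis shows that any such accidental $g$ must be parabolic and strictly enlarge the stabiliser in $\Gamma$ of some parabolic fixed point of $\Gamma$ (for example, promoting a rank-one cusp subgroup of $\Gamma$ to a rank-two cusp subgroup of $\Gamma_\infty$). This is incompatible with $\pi$ being a finite-degree covering, since a finite cover between hyperbolic $3$--manifolds preserves the rank of every cusp subgroup. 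Consequently no such $g$ exists, $\Gamma = \Gamma_\infty$, and the sequence $\{\rho_i\}$ converges strongly to $\rho$.
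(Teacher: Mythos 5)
The paper never writes out a proof of Proposition \ref{alg-strong} (it is asserted to ``follow easily from the Canary--Thurston covering theorem'', with \cite{AC1,AC2,Gero} cited for stronger results), so the comparison is with the intended standard argument. Your skeleton --- pass to a Chabauty subsequential limit $\Gamma\le\Gamma_\infty$, observe that $CC(N)=N$ forces every end of $N$ to be degenerate, invoke the covering theorem to get $[\Gamma_\infty:\Gamma]<\infty$, then upgrade finite index to index one --- is exactly the right route, and the first two steps are essentially correct. (Two small caveats there: your paraphrase of the covering theorem is off --- the correct logic is that each degenerate end of $N$ has a neighbourhood on which $\pi$ is finite--to--one, and since all finitely many ends are degenerate the cover is globally finite--to--one; and you should say a word about the exceptional fibered case of Canary's theorem, which arises when the geometric limit has finite volume.)

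The genuine gap is in the last step. First, the claim that every accidental element $g\in\Gamma_\infty\setminus\Gamma$ is parabolic is not justified: it is false for general geometric limits (Thurston's Chapter 9 examples produce new loxodromics in abundance), and even granting finite index it requires the J{\o}rgensen--Marden analysis showing that a root in $\Gamma_\infty$ of a loxodromic element of $\Gamma$ already lies in $\Gamma$ --- a nontrivial argument combining J{\o}rgensen's inequality with the algebraic convergence. Second, and fatally, the contradiction you draw does not follow from what you claim to establish. At best you show that an accidental parabolic \emph{strictly enlarges the stabiliser} of a parabolic fixed point $\xi$ of $\Gamma$ as a subgroup; this is entirely compatible with a finite--degree cover, since $\mathrm{Stab}_\Gamma(\xi)$ may simply have finite index $d>1$ in $\mathrm{Stab}_{\Gamma_\infty}(\xi)$ with both subgroups of the same rank (a finite cover can restrict to a degree $d>1$ cover on a cusp torus). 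So ``finite covers preserve the rank of every cusp subgroup'' yields no contradiction, and the proof does not close. One way to finish correctly: since $[\Gamma_\infty:\Gamma]<\infty$, the group $\Gamma_\infty$ is finitely generated and $N_\infty$ has a compact core $C$; for large $i$ the almost--isometric embeddings $f_i\colon C\to M_i$ induce homomorphisms $\phi_i\colon\Gamma_\infty\to\Gamma_i$ restricting on $\Gamma$ to $\rho_i\circ\rho^{-1}$ and converging pointwise to the inclusion. Then $\ker\phi_i\cap\Gamma=1$, and a normal subgroup of $\Gamma_\infty$ meeting the finite--index subgroup $\Gamma$ trivially is finite, hence trivial by torsion--freeness; thus $\phi_i$ is injective, while $\phi_i(\Gamma)=\Gamma_i\supseteq\phi_i(\Gamma_\infty)$ forces $\Gamma_\infty=\Gamma$.
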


See \cite{AC1,AC2,Gero} for related, much more powerful results.

Proposition \ref{alg-strong} does not apply if $\BH^3/\rho(\pi_1(M))$
is a maximal cusp.  However, in this case, we have the following
weaker result which follows directly from \cite[Prop. 3.2]{accs}:

\begin{prop}\label{prop:accs}
Assume that a sequence $\{\rho_i\}$ in $AH(M)$ converges algebraically
to some $\rho\in AH(M)$ and suppose that $\BH^3/\rho(\pi_1(M))$ is a
maximal cusp.  Then, up to passing to a subsequence, the hyperbolic
manifolds $\BH^3/\rho_i(\pi_1(M))$ converge geometrically to some $N$
such that there is an isometric embedding $CC(\BH^3/\rho(\pi_1(M)))
\hookrightarrow N$.
\end{prop}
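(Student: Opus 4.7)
\medskip
\noindent\textbf{Proof proposal for Proposition \ref{prop:accs}.}
The plan is to first extract geometric convergence from algebraic convergence by a standard basepoint argument, then identify the algebraic limit with a cover of the geometric limit, and finally appeal to \cite[Prop.~3.2]{accs} to see that this cover restricts to an embedding of the convex core.

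First, set $M_i = \BH^3/\rho_i(\pi_1(M))$ and $M_\infty = \BH^3/\rho(\pi_1(M))$. Since $M_\infty$ is a maximal cusp, its convex core $CC(M_\infty)$ is compact with totally geodesic boundary consisting of copies of the unique complete hyperbolic structure on the three--punctured sphere. Choose a basepoint $p_\infty \in CC(M_\infty)$ lying in the thick part, so that $\inj(p_\infty, M_\infty) \ge \epsilon_0$ for some $\epsilon_0>0$. Algebraic convergence $\rho_i \to \rho$ provides, for every compact submanifold $K \subset M_\infty$ containing $p_\infty$, maps $f_i \co K \to M_i$ which become arbitrarily close to isometric embeddings on their domain as $i\to\infty$. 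Set $p_i = f_i(p_\infty)$; then $\inj(p_i, M_i)$ is bounded below by approximately $\epsilon_0$ for large $i$. By the standard Gromov compactness theorem for pointed hyperbolic 3--manifolds, after passing to a subsequence $(M_i, p_i)$ converges geometrically to some pointed hyperbolic 3--manifold $(N, q)$.

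Second, I would identify how $M_\infty$ sits with respect to $N$. Passing to suitable conjugates, the discrete faithful representations $\rho_i$ converge to $\rho$ and the associated Kleinian groups $\rho_i(\pi_1(M))$ converge geometrically, in the Hausdorff topology on closed subgroups of $\mathrm{Isom}(\BH^3)$, to a Kleinian group $\Gamma_N$ with $N = \BH^3/\Gamma_N$. A general feature of such convergence is that the algebraic limit $\rho(\pi_1(M))$ is contained as a subgroup of the geometric limit $\Gamma_N$. This inclusion induces a locally isometric covering $\pi \co M_\infty \to N$ sending $p_\infty$ to $q$.

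Third, I would invoke \cite[Prop.~3.2]{accs}. That proposition is designed precisely for the situation where the algebraic limit is a maximal cusp: it asserts that the covering $\pi$ restricts to an isometric embedding of $CC(M_\infty)$ into $N$. The underlying mechanism is the rigidity of the totally geodesic three--punctured sphere boundary components of $CC(M_\infty)$: one shows that these boundary components cannot be collapsed or doubled up by the covering $\pi$, because any identification would force extra short curves in $M_i$ for $i$ large, contradicting that the corresponding curves in $M_\infty$ are parabolic (hence have no short representatives off the cusps). Once $\pi|_{CC(M_\infty)}$ is known to be an isometric embedding, the conclusion of the proposition follows.

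The main obstacle, if one had to prove it without citing \cite{accs}, would be the third step: controlling the covering $\pi$ on a neighborhood of $\D CC(M_\infty)$ so as to rule out self--identifications. Since we are allowed to cite \cite[Prop.~3.2]{accs} directly, the actual work here is the setup in the first two steps, namely arranging basepoints in the thick part of $CC(M_\infty)$ so that Gromov compactness applies and so that the identification $\rho(\pi_1(M)) \leq \Gamma_N$ is realized geometrically with $CC(M_\infty)$ in the image.
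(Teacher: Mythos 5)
Your proposal matches the paper's treatment: the paper offers no argument beyond the assertion that the proposition ``follows directly from [Prop.\ 3.2, accs]'', which is exactly the reduction you carry out, and your preliminary steps (injectivity-radius lower bound at a basepoint in the thick part, geometric convergence of a subsequence via compactness, inclusion of the algebraic limit group as a subgroup of the geometric limit group) are the standard J{\o}rgensen--Marden facts implicit in that citation. One small caveat: algebraic convergence yields locally almost-isometric maps $f_i\co K\to M_i$ that need not be embeddings (that would be geometric convergence), but this does not affect your argument, since you only use these maps to transport the basepoint and bound its injectivity radius from below.
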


Before going on to more interesting topics, we recall the following
strong density theorem:

\begin{named}{Strong density theorem}
Assume that $\BZ^2 \not\leq \pi_1(M)$.  Then any $\rho\in AH(M)$ is
the strong limit of a sequence $\{\rho_i\}$ in $AH(M)$ such that for
each $i$ the associated manifold $\BH^3/\rho_i(\pi_1(M))$ is convex
cocompact.
\end{named}

The strong density theorem follows directly from the Density theorem
and work of Brock and Souto \cite[Theorem 1.4] {Brock-Souto}.


\section{Proof of the main theorem}\label{sec:proof}
In this section we reduce the proof of Theorem \ref{thm:main} to
various results obtained in the two subsequent sections.

\begin{named}{Theorem \ref{thm:main}}
Let $N$ be a complete hyperbolic 3--manifold with finitely generated
fundamental group and a single topological end.  If $N$ is
homeomorphic to a submanifold of $\BS^3$, then $N$ is a geometric
limit of a sequence of hyperbolic knot complements.
\end{named}

\begin{proof}

Assume that $N$ is as in the statement of Theorem \ref{thm:main} and
let $\overline N$ be its manifold compactification.

First, we note that we need not consider trivial cases.  If $N$ has
abelian fundamental group, then using for instance Klein combination,
one obtains a sequence $\{N_i\}$ of say genus 2 handlebodies
converging geometrically to $N$.  By Lemma \ref{lemma:silly}, it
suffices to prove that each one of the $N_i$ is a geometric limit of
hyperbolic knot complements.  From now on we assume that $\pi_1(N)$ is
not abelian.  Similarly, if $\D\overline N$ is a torus, then the fact
that $N$ embeds into $\BS^3$ implies that $N$ is itself a knot
complement.  So we may assume that $\D\overline N$ is a connected
surface of genus at least two.  In other words, we may assume than $N$
has infinite volume.

In section \ref{sec:kleinian}, we will prove:

\begin{named}{Proposition \ref{prop:only-degenerate}}
Let $N$ be a complete, infinite volume, hyperbolic 3--manifold with
non-abelian, finitely generated fundamental group and a single
topological end.  Assume that $N$ is homeomorphic to a submanifold of
$\BS^3$.  Then $N$ is a geometric limit of a sequence $\{N_i\}$ of
hyperbolic 3--manifolds such that for all $i$ the following holds:
\begin{itemize}
\item $N_i$ has finitely generated fundamental group, no parabolics
and a single end,
\item the end of $N_i$ is degenerate, and
\item $N_i$ admits an embedding into $\BS^3$.
\end{itemize}
\end{named}

Combining Proposition \ref{prop:only-degenerate} and Lemma
\ref{lemma:silly}, we see that it suffices to prove Theorem
\ref{thm:main} for those manifolds $N$ which, besides satisfying the
assumptions in the theorem, have empty parabolic locus and degenerate
end.  Assume that we have such a manifold $N$ and let $\lambda \subset
\D\overline N$ be the ending lamination of its unique end.  Denote
also by $\lambda$ some measured lamination supported by $\lambda$ and
choose once and forever a sequence $\{\gamma_i\}$ of simple closed
curves in $\D\overline N$ converging to $\lambda$ in
$\CP\CM\CL(\D\overline N)$.  By Theorem \ref{Canary-masur}, for all
sufficiently large $i$, the pair $(\overline N, \CN(\gamma_i))$ is
acylindrical.  Here, $\CN(\gamma_i)$ is a regular neighborhood of
$\gamma_i$ in $\D\overline N$.  In section \ref{sec:knots} we will
show:

\begin{named}{Proposition \ref{prop:reduction}}
Let $\overline N$ be a compact irreducible and atoroidal submanifold
of $\BS^3$ with connected boundary of genus at least two, and let
$\eta \subset \D\overline N$ be a simple closed curve with
$(\D\overline N, \CN(\eta))$ acylindrical and $\D\overline N \setminus
\eta$ connected. Then there is a sequence of hyperbolic knot
complements $\{M_{K_i}\}$ converging geometrically to a hyperbolic
manifold $N_\eta$ homeomorphic to the interior of $\overline N$, such
that $\eta$ represents a parabolic element in $N_\eta$.
\end{named}

With $\gamma_i$ as above and $i$ sufficiently large, let
$N_{\gamma_i}$ be the hyperbolic 3--manifold provided by Proposition
\ref{prop:reduction}.  The content of Proposition
\ref{prop:limit-degenerate}, proved in section \ref{sec:kleinian}, is
that the manifolds $N_{\gamma_i}$ converge geometrically to $N$.

\begin{named}{Proposition \ref{prop:limit-degenerate}}
Suppose $M$ is a hyperbolic 3--manifold with empty parabolic locus and
a single end.  Assume that the end of $M$ is degenerate with ending
lamination $\lambda$ and suppose $\{\gamma_n\}$ is a sequence of
simple closed curves in $\D\overline{M}$ converging in
$\mathcal{PML}(\D\overline{M})$ to a projective measured lamination
supported by $\lambda$.  If $\{M_n\}$ is any sequence of hyperbolic
3--manifolds homeomorphic to $M$, such that $\gamma_n$ is parabolic in
$M_n$ for all $n$, then $M$ is a geometric limit of the sequence
$\{M_n\}$.
\end{named}

By Proposition \ref{prop:reduction}, each of the $N_{\gamma_i}$ is a
limit of hyperbolic knot complements and by Proposition
\ref{prop:limit-degenerate} the $N_{\gamma_i}$ converge geometrically
to $N$.  Lemma \ref{lemma:silly} concludes the proof of Theorem
\ref{thm:main}.
\end{proof}


\section{Three facts on geometric limits}\label{sec:kleinian}

The main goals of this section are Proposition
\ref{prop:only-degenerate} and Proposition
\ref{prop:limit-degenerate}.  We also prove a technical result needed
in the proof of Proposition \ref{prop:reduction} later on.

\subsection{Approximating by manifolds with degenerate ends}\label{sec:degenerate}
The statement of Proposition \ref{prop:only-degenerate} below is
summarized by saying that every hyperbolic manifold satisfying the
assumptions of Theorem \ref{thm:main} can be approximated by a
sequence of manifolds which, besides satisfying the same assumptions,
have the property that their unique end is degenerate.

\begin{prop}{\label{prop:only-degenerate}}
Let $N$ be a complete, infinite volume, hyperbolic 3--manifold with
non--abelian, finitely generated fundamental group and a single
topological end.  Assume that $N$ is homeomorphic to a submanifold of
$\BS^3$.  Then $N$ is a geometric limit of a sequence $\{N_i\}$ of
hyperbolic 3--manifolds such that for all $i$ the following holds:
\begin{itemize}
\item $N_i$ has finitely generated fundamental group, no parabolics
and a single end,
\item the end of $N_i$ is degenerate, and
\item $N_i$ admits an embedding into $\BS^3$.
\end{itemize}
\end{prop}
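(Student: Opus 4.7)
The strategy is a two-step reduction combining the Strong Density Theorem and Lemma~\ref{lemma:silly}: first approximate $N$ by convex cocompact structures, then approximate each such structure by degenerate-end manifolds, and finally extract a diagonal sequence.

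Let $\overline N$ be the manifold compactification of $N$ and $P \subset \D\overline N$ its parabolic locus. The hypotheses (infinite volume, non-abelian finitely generated $\pi_1$, a single topological end) force $\D\overline N$ to be a connected closed orientable surface $\Sigma$ of genus at least two, with $P$ consisting of a possibly empty union of disjoint essential annuli; in particular $\BZ^2\not\leq\pi_1(N)$. Since $\overline N$ embeds in $\BS^3$, every element of $AH(\overline N)$ produces a hyperbolic manifold homeomorphic to the interior of $\overline N$, and so embedding in $\BS^3$. By the Strong Density Theorem, $N$ is a strong (hence geometric) limit of a sequence $\{Q_j\}$ of convex cocompact manifolds in $AH(\overline N)$, each of which satisfies every conclusion of the proposition except that its single end is convex cocompact rather than degenerate. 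By Lemma~\ref{lemma:silly} the problem reduces to showing that every convex cocompact $Q\in AH(\overline N)$ of this topological type is a geometric limit of degenerate-end manifolds in $AH(\overline N)$.

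For the second step, fix $Q$ convex cocompact with conformal boundary $X\in\CT(\Sigma)$, choose a filling doubly incompressible measured lamination $\lambda$ on $\Sigma$ (such $\lambda$ exist, since $\overline N$ is irreducible and atoroidal with boundary of genus at least two), and approximate it by simple closed curves $\gamma_i\to\lambda$ in $\CP\CM\CL(\Sigma)$. The goal is to build degenerate-end $D_i\in AH(\overline N)$, with ending lamination supported on a perturbation of $\lambda$, that converge algebraically to $Q$. Algebraic convergence is obtained by working in slices of $AH(\overline N)$ defined by pinching $\gamma_i$ and using Proposition~\ref{realized-continuity} together with the Ending Lamination Theorem to ensure that the ending laminations of the $D_i$ tend to $\lambda$ without being realized, while the algebraic structure tracks that of $Q$. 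Algebraic convergence is then promoted to strong, hence geometric, convergence via Propositions~\ref{alg-strong} and~\ref{prop:accs}; the first applies at each $D_i$ (whose convex core is the whole manifold), while the second controls intermediate maximal-cusp-like limits so that no extra geometric piece splits off in the geometric limit beyond $Q$.

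\textbf{Main obstacle.} The principal difficulty lies in the second step: approximating a convex cocompact $Q$ by degenerate-end manifolds geometrically, not merely algebraically. The natural algebraic constructions of degenerate structures (algebraic limits of families whose conformal boundary tends to infinity in $\CT(\Sigma)$, or Bers-boundary-type points) typically produce degenerate manifolds far from $Q$. Keeping the approximants close to $Q$ requires careful control of the ending laminations and of the pinching curves, and since $Q$ is convex cocompact, Proposition~\ref{alg-strong} does not apply to the limit itself; the argument must therefore use covering-theorem-based tools to rule out the appearance of additional geometric ends in the geometric limit. The embedding hypothesis $N\hookrightarrow\BS^3$ plays the role of guaranteeing that the approximants constructed in $AH(\overline N)$ are of the correct topological type throughout.
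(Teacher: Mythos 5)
Your first step --- invoking the Strong Density Theorem to reduce to the case where $N$ is convex cocompact --- agrees with the paper. The second step, however, contains a fatal flaw rather than just a gap. You propose to construct degenerate-end manifolds $D_i \in AH(\overline N)$ converging \emph{algebraically} to the convex cocompact manifold $Q$. This is impossible: convex cocompact representations lie in the interior of $AH(\overline N)$ in the character variety (Marden's quasiconformal stability), and that interior consists precisely of geometrically finite, minimally parabolic representations. Hence any sequence in $AH(\overline N)$ converging algebraically to $Q$ is eventually geometrically finite, and in particular eventually \emph{not} degenerate. No amount of care with pinching slices, Proposition \ref{realized-continuity}, or covering-theorem arguments can repair this, because the obstruction is to the algebraic convergence itself, before the question of promoting it to geometric convergence even arises. (Note also that Proposition \ref{prop:accs} applies only when the algebraic limit is a maximal cusp, which $Q$ is not.)

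The missing idea --- and the heart of the paper's proof --- is that the degenerate approximants must have a \emph{different topological type} from $N$; it is only the geometric limit, taken at suitable basepoints, that recovers $N$. Concretely, the paper uses Brooks' circle packing theorem to replace $N$ by nearby convex cocompact manifolds whose conformal boundary carries a fine circle packing, cuts along the associated totally geodesic planes and interstitial ideal triangles, and doubles along the planes. The result is a sequence of maximal cusps $N_i$, each homeomorphic to two copies of $\overline N$ joined by $1$--handles (still embeddable in $\BS^3$ and with a single end --- this is where the single-end hypothesis is used), whose convex cores $CC(N_i)$ converge geometrically to $N$. Each maximal cusp is an algebraic limit of manifolds with a single degenerate end and no parabolics by Canary--Culler--Hersonsky--Shalen, and Proposition \ref{prop:accs} --- applicable exactly because the algebraic limit is a maximal cusp --- embeds $CC(N_i)$ isometrically into a geometric limit of those approximants. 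A diagonal argument then produces the desired sequence. Your proposal contains neither the change of topology nor the maximal-cusp intermediary, and without them the construction cannot be carried out.
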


\begin{proof}
Observe that it follows from the Strong Density Theorem that the
manifold $N$ can be approximated by homeomorphic convex cocompact
manifolds.  In other words, we may assume that $N$ is convex
cocompact.

Below we will use an argument of Brooks \cite{Brooks} to prove:

\begin{lem}\label{prop:cocompact-by-maxcusps}
Let $N$ be a complete, convex cocompact hyperbolic 3--manifold with
finitely generated fundamental group and a single topological end.
Assume that $N$ is homeomorphic to a submanifold of $\BS^3$.  Then $N$
is a geometric limit of the sequence of convex cores $\{CC(N_i)\}$
associated to a sequence of hyperbolic 3--manifolds $N_i$ such that
the following holds:
\begin{itemize}
	\item $N_i$ has finitely generated fundamental group and a single
	end,
	\item $N_i$ admits an embedding into $\BS^3$, and 
	\item $N_i$ is a maximal cusp.
\end{itemize}
\end{lem}

Assuming Lemma \ref{prop:cocompact-by-maxcusps} we conclude the proof
of Proposition \ref{prop:only-degenerate}.  Let $N_i$ be one of the
maximal cusps provided by Lemma \ref{prop:cocompact-by-maxcusps}.  By
work of Canary, Culler, Hersonsky, and Shalen \cite[Lemma 15.2]{cchs},
$N_i$ is the algebraic limit of a sequence $\{ N_n^i\}_n$ of
hyperbolic 3--manifolds homeomorphic to $M$, without parabolics, and
whose only end is degenerate.  By Proposition \ref{prop:accs}, there
is a subsequence of $\{N_n^i\}_n$, say the whole sequence, converging
geometrically to some manifold $N_\infty^i$ into which the convex core
$CC(N_i)$ of the maximal cusp $N_i$ can be embedded. Since the convex
cores $CC(N_i)$ converge geometrically to $N$, and each one of them is
contained in a geometric limit of the sequence $\{N_n^i\}$, we deduce
that there is a diagonal sequence $\{N^i_{n_i}\}$ converging
geometrically to $N$.  This diagonal sequence satisfies the
requirements of the proposition.
\end{proof}

It remains to prove Lemma \ref{prop:cocompact-by-maxcusps}.  Before
launching the proof we need to remind the reader of a couple of
definitions.  Assume for the sake of concreteness that
$N=\BH^3/\Gamma$ is convex cocompact.  Recall that the {\em
discontinuity domain} $\Omega_\Gamma = \D_\infty \BH^3 \setminus
\Lambda_\Gamma$ is the complement of the limit set $\Lambda_\Gamma$ of
$\Gamma$ in the boundary at infinity $\D_\infty\BH^3$.  Identifying
$\D_\infty \BH^3 = \BC P^1$ we obtain that the surface $\D_\infty N =
\Omega_\Gamma/\Gamma$ is not only a Riemann surface but has also a
canonical projective structure.  A \emph{circle in $\D_\infty N$} is a
topological circle which is actually round with respect to the
canonical projective structure.  A \emph{circle packing} is a
collection of circles in $\D_\infty N$ bounding disjoint disks, such
that every component of the complement of the union of all these disks
are curvilinear triangles.  We will derive Lemma
\ref{prop:cocompact-by-maxcusps} from the following fact due to Brooks
\cite{Brooks}:
 
\begin{sat}[Brooks]\label{prop:circle-packing}
For all $\epsilon>0$ there is a convex cocompact hyperbolic
3--manifold $N_\epsilon$ homeomorphic to $N$ such that the following
holds: $N_\epsilon$ is $(1+\epsilon)$--bilipschitz to $N$, and
$\D_\infty N_\epsilon$ admits a circle packing such that each disk has
at most diameter $\epsilon$ with respect to the canonical hyperbolic
metric of $\D_\infty N$.
\end{sat}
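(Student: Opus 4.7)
The plan is to follow Brooks's original strategy from \cite{Brooks}. The key observation is that $\D_\infty N = \Omega_\Gamma/\Gamma$ carries two natural structures: the canonical complex projective structure inherited from the embedding $\Omega_\Gamma \hookrightarrow \BC P^1$, and the hyperbolic (Poincar\'e) metric coming from its underlying conformal structure. Our task is to deform the Kleinian group $\Gamma$ to a new convex cocompact group $\Gamma_\epsilon$ so that the quotient of its discontinuity domain admits a genuine projective circle packing with small hyperbolic diameter disks, while keeping the deformation $(1+\epsilon)$-small in the bilipschitz sense on the quotient 3-manifold.

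First, using the Koebe--Andreev--Thurston circle packing theorem applied to the Riemann surface underlying $\D_\infty N$, fix a fine triangulation $\tau$ of $\D_\infty N$, realized by an abstract circle packing in the hyperbolic metric in which every disk has hyperbolic diameter at most $\epsilon/2$. This encodes the combinatorics we wish to realize projectively. The point is that this packing is not a priori round with respect to the canonical projective structure on $\D_\infty N$; it is merely a topological packing whose circles are round in the conformal metric.

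The core step, due to Brooks, is to promote $\tau$ to an honest projective packing at the cost of a small deformation. One works in the finite-dimensional space of marked complex projective structures on $\D_\infty N$, which has real dimension equal to $\dim_\BR \CT(\D_\infty N)$, and one considers the map assigning to a projective structure the collection of ``packing defects'' (products of cross-ratios, or turning angles, at the vertices of $\tau$). Brooks shows that the locus where this map vanishes is nonempty and passes arbitrarily close to the canonical projective structure of $\D_\infty N$ when $\tau$ is sufficiently fine; this yields a nearby marked projective structure $\sigma_\epsilon$ on $\D_\infty N$ that exactly realizes a projective circle packing with combinatorics $\tau$. Via the Ahlfors--Bers correspondence, small projective perturbations of $\D_\infty N$ are realized by quasiconformal deformations of $\Gamma$, and hence $\sigma_\epsilon$ is the projective boundary of a new convex cocompact hyperbolic $3$-manifold $N_\epsilon$ homeomorphic to $N$. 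Choosing the perturbation sufficiently small guarantees both the $(1+\epsilon)$-bilipschitz bound between $N_\epsilon$ and $N$ and, via uniform control on the Poincar\'e metrics under quasiconformal deformations, the bound $\epsilon$ on the hyperbolic diameter of the packing disks in $\D_\infty N_\epsilon$.

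The main obstacle, as always in Brooks's work, is the perturbation argument: one must show that projective structures admitting a fixed combinatorial circle packing form a smooth subvariety of the correct codimension in the space of all projective structures on $\D_\infty N$, and that as the triangulation $\tau$ is refined this subvariety can be made to pass within any prescribed distance of the canonical projective structure. This is the analytic heart of Brooks's theorem and requires a careful local analysis of the defect map in terms of Schwarzian derivatives, together with a compactness argument controlling the fineness of packings needed to achieve a given $\epsilon$.
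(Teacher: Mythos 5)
The paper does not actually prove this statement: it is quoted as a theorem of Brooks, with the remark that it ``follows easily from the arguments used to prove Theorem 3'' of \cite{Brooks}. So the relevant comparison is with Brooks's argument, and your high--level plan --- perturb $\Gamma$ by a small quasiconformal deformation so that the projective boundary of the new convex cocompact manifold carries an exact circle packing by small disks, then convert smallness of the deformation into a $(1+\epsilon)$--bilipschitz equivalence of the $3$--manifolds --- is indeed the intended route.

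However, the mechanism you propose for the core step is not Brooks's, and as stated it has a genuine gap. You fix a fine combinatorial packing $\tau$ in advance (via Koebe--Andreev--Thurston) and then argue that the locus of projective structures admitting a packing with nerve $\tau$ is a subvariety ``of the correct codimension'' passing near the canonical projective structure of $\D_\infty N$. Two problems. First, the Koebe--Andreev--Thurston metric realizing a prescribed $\tau$ is some other point of Teichm\"uller space; there is no reason it is close to the conformal structure of $\D_\infty N$, and refining $\tau$ does not by itself fix this. Second, and more seriously, the number of ``packing defect'' conditions grows with the fineness of $\tau$, while the relevant deformation space of the convex cocompact group $\Gamma$ has fixed finite dimension; a naive codimension count therefore predicts that the $\tau$--packable locus is empty or rigid for generic fine $\tau$, not that it sweeps close to every point. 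Density cannot be obtained with the combinatorics prescribed in advance. Brooks's actual argument inverts the logic: he starts from a finite configuration of genuinely round circles (round for the canonical projective structure) nearly packing $\D_\infty N$, so that the complementary interstices are finitely many quadrilateral regions each carrying one real modulus; he shows these finitely many moduli give local coordinates on (a slice of) the quasiconformal deformation space; and he observes that the set of moduli for which an interstice can be filled to completion by further round circles (a terminating continued--fraction condition) is dense. A small perturbation of $\Gamma$ then lands all moduli in this dense set, and the combinatorics of the resulting packing is an \emph{output} of the construction, not an input. The smallness of the disks is arranged by starting with small circles, and the $(1+\epsilon)$--bilipschitz control follows, as you say, from the smallness of the quasiconformal dilatation. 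If you replace your transversality step by this interstice--modulus argument, the proof matches the source the paper relies on.
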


This theorem is not stated exactly in this way in \cite{Brooks} but it
follows easily from the arguments used to prove Theorem 3 therein.

\begin{proof}[Proof of Lemma \ref{prop:cocompact-by-maxcusps}]
Let $N$ be convex cocompact, and let $\D_\infty N$ be its conformal
boundary with induced projective structure.  Choose $\epsilon_i$
positive and tending to $0$, and for all $i$ let $N_i^1$ be the
manifold provided by Theorem \ref{prop:circle-packing}, and let
$\CC_i$ be the corresponding circle packing of $\D_\infty N_i^1$.
Observe that $N$ is a geometric limit of the sequence $N_i^1$.

Each of the circles in the circle packing $\CC_i$ bounds a properly
embedded totally geodesic plane in $N_i^1$.  Paint such a plane black.
Any two of these black planes either coincide or are disjoint.
Moreover observe that if $x$, $y$, and $z$ are the vertices of one of
the (triangular) interstices of the packing $\CC_i$, say bounded by
the circles $C_1$, $C_2$, and $C_3$, then the ideal triangle with
vertices $x$, $y$, $z$ is perpendicular to the corresponding black
planes $P_1$, $P_2$, $P_3$.  Paint carefully the triangles red.

Since each of the circles has at most diameter $\epsilon_i \to 0$, for
all $d>0$ there is some $i_d$ such that for $i \ge i_d$, none of the
disks or triangles enters the radius $d$ neighborhood of the convex
core $CC(N_i^1)$ of $N_1^1$.  Cut $N_i$ open along all these planes
and triangles and let $N_i^2$ be the closure of the component
containing $CC(N_i^1)$.

The boundary of $N_i^2$ consists of totally geodesic pieces, some
black and some red.  Let $N_i^3$ be the double of $N_i^2$ along the
black boundary, which consists of subsets of the disks.  Since the red
ideal triangles are perpendicular to the black planes of the boundary
of $N_i^2$, the manifold $N_i^3$ has totally geodesic (red) boundary.
Moreover, since $N^3_i$ is obtained by doubling $N_i^2$, each of the
boundary components of $N^3_i$ is the double of one of the red
triangles in the boundary of $N_i^2$.  Thus every boundary component
of $N_i^3$ is a 3--punctured sphere.  Hence $N_i^3$ is the convex core
of a maximal cusp for all $i$.

Additionally, observe that the manifold compactification of $N_i^3$ is
homeomorphic to two copies of $\overline N$ to which we have attached
1--handles, one for each circle in the circle packing.  In particular,
$N_i^3$ embeds in $\BS^3$ and has a single end.

By construction, $N$ is a geometric limit of the sequence $N_i^1$.
The condition that for all $d$, black disks and red triangles are
eventually of distance greater than $d$ from $CC(N_i^1)$ implies that
$N$ is also a geometric limit of the sequence $N_i^2$.  Each $N_i^2$
embeds into $N_i^3$.  Thus $N$ is a geometric limit of the manifolds
$N_i^3$ as well.
\end{proof}

\begin{bem}
Observe that in the proof of Lemma \ref{prop:cocompact-by-maxcusps} we
used the fact that the manifold $N$ had only a single end to conclude
that the manifolds $N^3_i$ embed into $\BS^3$.  For manifolds with
more ends this argument fails.  However, in some cases it should be
possible to by-pass this problem by choosing the circle packings
$\CC_i$ with more care than we did.  For instance, if $N$ admits a
fixed--point free involution $\tau$ which preserves each boundary
component, then we can consider only $\tau$-equivariant circle
packings and glue each black tile with its image under $\tau$.  This
shows for instance that whenever $\Gamma\subset\PSL_2\BR$ is a torsion
free, cocompact Fuchsian group such that the quotient surface
$\BH^2/\Gamma$ admits an orientation preserving fixed--point free
involution, then the hyperbolic 3--manifold $\BH^3/\Gamma$ is the
geometric limit of a sequence of hyperbolic 3--manifolds $N_i$,
homeomorphic to submanifolds of $\BS^3$, with two topological ends
which are degenerate.
\end{bem}

\subsection{Approximating by manifolds with prescribed cusps}\label{sec:construct}
We show now that given a manifold $M$ with a single degenerate end, we
may obtain $M$ as a limit of hyperbolic manifolds with few
restrictions on their geometry.

\begin{prop}{\label{prop:limit-degenerate}}
Suppose $M$ is a hyperbolic 3--manifold with empty parabolic locus and
a single end. Assume that the end of $M$ is degenerate with ending
lamination $\lambda$ and suppose $\{\gamma_n\}$ is a sequence of
simple closed curves in $\D\overline{M}$ converging in
$\mathcal{PML}(\D\overline{M})$ to a projective measured lamination
supported by $\lambda$.  If $\{M_n\}$ is any sequence of hyperbolic
3--manifolds homeomorphic to $M$, such that $\gamma_n$ is parabolic in
$M_n$ for all $n$, then $M$ is a geometric limit of the sequence
$\{M_n\}$.
\end{prop}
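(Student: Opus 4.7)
The plan is to (a) extract an algebraic subsequential limit of $\{M_n\}$ in $AH(\overline M)$, (b) identify that limit with $M$ using non-realization of the ending lamination, and (c) upgrade algebraic to geometric convergence using the convex core rigidity of degenerate ends. For (a), fix homeomorphisms $M \to M_n$ and regard each $M_n$ as a representation $\rho_n \in AH(\overline M)$; after pre-composing with a suitable mapping class, a pleated surface argument (realizing simple closed curves on $\D\overline M$ that stay far from $\gamma_n$ in $\CP\CM\CL(\D\overline M)$) yields markings under which some subsequence of $\{\rho_n\}$ converges algebraically to a representation $\rho_\infty$, producing an algebraic limit $M_\infty = \BH^3/\rho_\infty(\pi_1(M))$.

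The core step is identifying $M_\infty$ with $M$. I would show that $\lambda$ is not realized by any pleated surface in $M_\infty$ in the homotopy class determined by the marking: if it were, Proposition \ref{realized-continuity} would force $l_{M_n}(\gamma_n) \to \infty$, contradicting the fact that $\gamma_n$ is parabolic in $M_n$, so $l_{M_n}(\gamma_n) = 0$. Theorem \ref{Canary-masur} guarantees that $\lambda$ is filling and doubly incompressible, so Proposition \ref{detect-ending} applies with empty parabolic locus $P$: its conclusions are that $\overline M$ is the manifold compactification of $M_\infty$, the parabolic locus of $M_\infty$ is empty, and $M_\infty$ has a unique geometric end which is degenerate with ending lamination $\lambda$. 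Since $M$ and $M_\infty$ thus share manifold compactification, parabolic locus and ending lamination, the ending lamination theorem yields an isometry $M_\infty \cong M$.

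To finish, note that $M$ has a single degenerate end and empty parabolic locus, so $CC(M) = M$. Proposition \ref{alg-strong} therefore promotes the algebraic convergence $\rho_n \to \rho_\infty$ to strong convergence, which in particular gives geometric convergence $M_n \to M$. Since every algebraic subsequential limit of $\{M_n\}$ is identified with $M$ by the argument above, a standard subsequence-of-subsequence diagonal argument shows that the full sequence $\{M_n\}$ converges geometrically to $M$. The principal obstacle I expect is step (a): since the parabolic curve $\gamma_n$ varies with $n$, one must choose markings so that $\{\rho_n\}$ is bounded in $AH(\overline M)$; this is handled using the filling property of $\lambda$ (so that any fixed reference curve on $\D\overline M$ is eventually transverse to $\gamma_n$ and realizable as a closed geodesic in $M_n$) together with the uniform area bound on pleated surfaces.
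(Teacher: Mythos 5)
Your steps (b) and (c) — non-realization of $\lambda$ via Proposition \ref{realized-continuity}, identification of the limit via Proposition \ref{detect-ending} and the ending lamination theorem, promotion to strong convergence via Proposition \ref{alg-strong} because $CC(M)=M$, and the subsequence-of-subsequence reduction — coincide with the paper's argument. The gap is in step (a), which is precisely the point where the paper leans on a deep external theorem. The paper obtains algebraic convergence of a subsequence of $\{\rho_n\}$, \emph{with the given markings}, from Kim--Lecuire--Ohshika \cite[Theorem 2]{KLO}: since the $\gamma_n$ converge to a doubly incompressible lamination (Theorem \ref{Canary-masur}) and satisfy $l_{M_n}(\gamma_n)=0$, the sequence is precompact in $AH(\overline M)$. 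This is not something you can recover with a quick pleated-surface argument. When $\D\overline M$ is compressible (e.g.\ $M$ a handlebody, which is the generic case in this paper), arbitrary simple closed curves on $\D\overline M$ need not be realizable by pleated surfaces at all, a uniform area bound does not control diameter or injectivity radius of the realizing surfaces, and handling the short compressible curves is exactly the content of the Masur-domain/doubly-incompressible compactness theory. Your sentence acknowledging this as ``the principal obstacle'' is accurate, but the proposed resolution does not close it.

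There is a second, related problem with your fix: if you pre-compose $\rho_n$ with nontrivial, varying mapping classes $\phi_n$ to force convergence, then the limit you obtain is a limit of $\rho_n\circ(\phi_n)_*$, whose accidental parabolics are the curves $\phi_n^{-1}(\gamma_n)$; these no longer converge to $\lambda$ unless the $\phi_n$ are eventually constant. Your non-realization argument, and hence the application of Proposition \ref{detect-ending} and the ending lamination theorem, requires the limit to be marked compatibly with $M$ so that the non-realized lamination is $\lambda$ itself. So either the markings are unchanged — in which case you must prove the KLO-type compactness outright — or they change, and the identification $M_\infty\cong M$ breaks. The correct repair is simply to cite \cite[Theorem 2]{KLO} as the paper does; everything downstream of that in your write-up is sound.
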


\begin{proof}
Observe that it suffices to show that every subsequence of the
sequence $\{M_n\}$ contains a further subsequence which converges
geometrically to $M$.  In particular, we may pass to subsequences as
often as we wish.

Abusing notation, let $\lambda$ also denote the measured lamination
with support $\lambda$ which is the limit of the $\gamma_n$.  By
Theorem \ref{Canary-masur}, the measured lamination $\lambda$ is
doubly incompressible.  In particular, it follows from the work of
Kim, Lecuire, and Ohshika \cite[Theorem 2]{KLO} that every subsequence
of $\{M_n\}$ has a subsequence, say the whole sequence, which
converges algebraically to a hyperbolic 3--manifold $M_A$ homotopy
equivalent to $\overline{M}$.

If $\lambda$ were realized in $M_A$, then by Proposition
\ref{realized-continuity}
$$\lim_nl_{M_n}(\gamma_n)=\infty.$$ On the other hand, since
$\gamma_n$ is parabolic in $M_n$ we have that
$$l_{M_n}(\gamma_n)=0$$ for all $n$.  Hence $\lambda$ cannot be
realized.  It now follows from Proposition \ref{detect-ending} that
$M_A$ is homeomorphic to $M$, has no parabolics, and its end is
degenerate with ending lamination $\lambda$.  In particular,
Proposition \ref{alg-strong} implies that the sequence $M_n$ converges
strongly to $M_A$.  Moreover, since $M$ and $M_A$ have the same ending
lamination $\lambda$, the ending lamination theorem implies that $M$
and $M_A$ are isometric.
\end{proof}

\subsection{Geometric limits of gluings}
We now study some of the geometric limits of sequences of hyperbolic
manifolds obtained by gluing two manifolds by higher and higher powers
of a pseudo-Anosov; compare with \cite{Soma}.  We prove:

\begin{prop}\label{prop:gluings}
Assume that $(\overline N_1, P_1)$ and $(\overline N_2, P_2)$ are
acylindrical, that $S = \D\overline N_1 \setminus P_1$ is connected,
that $\D\overline N_2 \setminus P_2$ has a component $S'$ homeomorphic
to $S$, and fix a homeomorphism $\phi \co S \to S'$.  Fix also a
pseudo-Anosov mapping class $\psi \co S\to S$ and consider 
the 3--manifold
$$N^n=\overline N_1\cup_{\phi\circ\psi^n}\overline N_2$$ obtained by
gluing $\overline N_1$ and $\overline N_2$.  Assume finally that for
each $n$ there is a hyperbolic 3--manifold $M_n$ and a
$\pi_1$--injective embedding $N^n \hookrightarrow M_n$, with all
curves in $P_1 \cup P_2$ mapped to parabolics in $M_n$.

Then some geometric limit $M_G$ of the sequence $\{M_n\}$ is
homeomorphic to the interior of $\overline{N}_1$, has parabolic locus
$P_1$, and its only geometric end is degenerate.
\end{prop}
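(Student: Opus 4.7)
The plan is to realize $M_G$ as an algebraic limit coming from the inclusions $\overline N_1\hookrightarrow M_n$, detect its end invariants via the pseudo-Anosov dynamics, and then promote the algebraic limit to a geometric one.

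First I would choose basepoints $p_n$ in a fixed compact subset of $\overline N_1\subset M_n$, away from the cusps corresponding to $P_1$. The inclusions $\overline N_1\hookrightarrow M_n$ give representations $\rho_n\in AH(\overline N_1)$, each of which sends every curve of $P_1$ to a parabolic. Since $(\overline N_1,P_1)$ is acylindrical, Thurston's compactness theorem for acylindrical pairs yields, after passing to a subsequence, an algebraic limit $\rho_n\to\rho_\infty$; write $M_A=\BH^3/\rho_\infty(\pi_1(\overline N_1))$.

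Next I would exhibit the attracting pseudo-Anosov lamination of $\psi^{-1}$ as an unrealized lamination in $M_A$. Fixing a non-peripheral simple closed curve $\beta\subset S'$ and letting $\lambda\in\CP\CM\CL(S)$ denote the attracting fixed point of $\psi^{-1}$ on $\CP\CM\CL(S)$, the curves $\gamma_n=\psi^{-n}(\phi^{-1}(\beta))\subset S$ converge in $\CP\CM\CL(S)$ to $\lambda$. Under the gluing defining $N^n$, the loops $\gamma_n$ and $\beta$ are identified, hence represent conjugate elements of $\pi_1(M_n)$, so $\ell_{M_n}(\gamma_n)=\ell_{M_n}(\beta)$. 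A second application of Thurston's compactness, now to $AH(\overline N_2,P_2)$, gives a uniform bound on $\ell_{M_n}(\beta)$. Consequently $\ell_{M_n}(\gamma_n)$ stays bounded, and Proposition \ref{realized-continuity} forces $\lambda$ to be unrealized in $M_A$. The required double incompressibility of $\lambda$ follows from the acylindricity of $(\overline N_1,P_1)$ combined with $\lambda$ being a filling pseudo-Anosov lamination on the incompressible surface $S$. Proposition \ref{detect-ending} then identifies the manifold compactification of $M_A$ with $\overline N_1$, its parabolic locus with $P_1$, and its unique geometric end as degenerate with ending lamination $\lambda$.

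Finally I would promote this to a statement about the geometric limits of $\{M_n\}$ themselves. Since $M_A$ has no convex cocompact ends, $CC(M_A)=M_A$, so Proposition \ref{alg-strong} upgrades algebraic to strong convergence: the covers $\widetilde M_n=\BH^3/\rho_n(\pi_1(\overline N_1))$ of $M_n$ converge geometrically to $M_A$. Passing to a further subsequence, $(M_n,p_n)$ converges geometrically to some $M_G$, and the Chabauty containment of subgroups furnishes a Riemannian cover $M_A\to M_G$. The main obstacle is showing that this cover has degree one. Assuming first that $M_G$ has infinite volume, the Canary--Thurston covering theorem forces the cover to be finite-to-one; to rule out higher degree, one argues that any coset representative in $\pi_1(M_n)\setminus\rho_n(\pi_1(\overline N_1))$ displaces the lifted basepoint arbitrarily far as $n\to\infty$. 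This is the key geometric consequence of the pseudo-Anosov twist: after normalization in the amalgamated product $\pi_1(\overline N_1)*_{\pi_1(S)}\pi_1(\overline N_2)$, any such representative must cross $\overline N_2$, and the pseudo-Anosov dynamics combined with the bounded geometry of $\overline N_2$ in $M_n$ drive its basepoint translation to infinity. Once the cover has degree one, $M_G$ is isometric to $M_A$ and inherits the claimed properties.
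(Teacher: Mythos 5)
Your argument through the identification of the algebraic limit matches the paper's: Thurston's compactness theorem for the acylindrical pairs gives algebraic convergence of the covers corresponding to $\pi_1(\overline N_1)$ and $\pi_1(\overline N_2)$, the bounded length of $\psi^{-n}(\phi^{-1}(\beta))$ (inherited from the bounded length of $\beta$ on the $\overline N_2$ side) together with Proposition \ref{realized-continuity} shows the repelling lamination of $\psi$ is not realized, and Propositions \ref{detect-ending} and \ref{alg-strong} identify $M_A$ and upgrade to strong convergence of the covers. Up to that point you are reproducing the paper's proof.

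The gap is in your final step, where you must show the cover $M_A\to M_G$ is trivial. Your proposed mechanism --- that every element of $\pi_1(M_n)\setminus\rho_n(\pi_1(\overline N_1))$ displaces the lifted basepoint arbitrarily far --- is exactly the statement that fails in general when geometric limits are strictly larger than algebraic limits, so it cannot be waved at; it needs a proof, and your sketch does not supply one. Worse, your normalization in the amalgamated product $\pi_1(\overline N_1)*_{\pi_1(S)}\pi_1(\overline N_2)$ only accounts for elements of $\pi_1(N^n)$, whereas $N^n$ is merely a $\pi_1$--injective submanifold of $M_n$: there are elements of $\pi_1(M_n)$ represented by loops that leave $N^n$ through the annuli of $P_1$ or through the components of $\D\overline N_2\setminus(S'\cup P_2)$ without ever crossing the gluing surface $S$, and your pseudo-Anosov/long-product-region heuristic says nothing about their translation lengths at the basepoint. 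The paper avoids all of this with a purely algebraic device: acylindricity of the two pieces yields Lemma \ref{lem:blahblah}, asserting that elements of $\pi_1(\overline N_1)$ conjugate in $\pi_1(M_n)$ are already conjugate in $\pi_1(\overline N_1)$; the covering theorem makes the cover $M_A\to M_G$ finite-to-one, and a nontrivial finite cover would force two elements of $\pi_1(\overline N_1)$ to be conjugate in $\pi_1(M_G)$ but not in $\pi_1(M_A)$, hence (by geometric convergence) conjugate in $\pi_1(M_n)$ for large $n$, contradicting that lemma. You should replace your displacement argument with this conjugacy argument, or else actually prove a uniform lower bound on displacement for \emph{all} of $\pi_1(M_n)\setminus\rho_n(\pi_1(\overline N_1))$, which the hypotheses do not obviously give you.
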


The proof of Proposition \ref{prop:gluings} is similar to the proof of
Proposition \ref{prop:limit-degenerate}.  This result will play a key
role in the proof of Proposition \ref{prop:reduction}.

\begin{proof}
We are going to prove that every subsequence of the sequence $\{M_n\}$
has some further subsequence converging geometrically to a hyperbolic
3--manifold $M_G$ homeomorphic to the interior of $\overline N_1$,
with parabolic locus $P_1$, and whose unique geometric end is
degenerate with ending lamination equal to the repelling lamination of
$\psi$.  The ending lamination theorem implies then that any two of
these geometric limits are isometric; this shows that $M_G$ is a
geometric limit of the whole sequence $\{M_n\}$.  In particular, as in
the proof of Proposition \ref{prop:limit-degenerate}, we may pass to
subsequences as often as we wish.

Before going any further, observe that the assumption that
$\D\overline N_1 \setminus P_1$ and $\D\overline N_2 \setminus P_2$
are incompressible implies that $\pi_1(\overline N_1)$ injects into
$\pi_1(N^n)$ and hence into $\pi_1(M_n)$.  The assumption that
$(\overline N_1, P_1)$ and $(\overline N_2, P_2)$ are actually
acylindrical implies furthermore:

\begin{lem}\label{lem:blahblah}
Let $\overline N_1$ and $M_n$ be as in the statement of Proposition
\ref{prop:gluings} and identify $\pi_1(\overline N_1)$ with a subgroup
of $\pi_1(M_n)$.  If two elements $\gamma, \gamma' \in \pi_1(\overline
N_1)$ are conjugate in $\pi_1(M_n)$ for some $n$, then they are
conjugate in $\pi_1(\overline N_1)$.  Equivalently, if two essential
curves $\gamma, \gamma' \subset\overline N_1$ are freely homotopic
within $M_n$ they are freely homotopic within $\overline N_1$.  \qed
\end{lem}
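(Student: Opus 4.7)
Plan: The lemma asserts that $\pi_1(\overline N_1) \hookrightarrow \pi_1(M_n)$ is conjugacy-injective. I would prove it in two reductions, first passing from $\pi_1(M_n)$ to $\pi_1(N^n)$, and then from $\pi_1(N^n)$ to $\pi_1(\overline N_1)$.

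For the first reduction, realize the conjugation as a map of an annulus $f \co A \to M_n$ sending the two boundary circles to $\gamma$ and $\gamma'$ inside $\overline N_1 \subset N^n$. By the $\pi_1$-injectivity of $N^n \hookrightarrow M_n$ combined with the incompressibility of $\D N^n$ in $N^n$ (which follows from the acylindricity of $(\overline N_i, P_i)$), each component of $\D N^n$ is incompressible in $M_n$. Put $f$ in general position with respect to $\D N^n$ and simplify by a standard 3--manifold cut-and-paste: circles of $f^{-1}(\D N^n)$ that bound disks in $A$ map to circles bounding disks in $\D N^n$ and can be eliminated; essential circles in $A$ bound sub-annuli which are boundary-parallel and can be pushed off. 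This yields a homotopy taking place inside $N^n$, so $\gamma$ and $\gamma'$ are conjugate in $\pi_1(N^n)$.

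For the second reduction, the decomposition $N^n = \overline N_1 \cup_S \overline N_2$ yields the amalgamated product $\pi_1(N^n) = \pi_1(\overline N_1) *_{\pi_1(S)} \pi_1(\overline N_2)$ and an action of $\pi_1(N^n)$ on the Bass--Serre tree $T$. Let $v_1$ be the vertex stabilized by $\pi_1(\overline N_1)$; if $\gamma' = h \gamma h^{-1}$ for some $h \in \pi_1(N^n)$, then $\gamma'$ fixes both $v_1$ and $h \cdot v_1$ and hence the segment between them. Either $h \cdot v_1 = v_1$, so $h \in \pi_1(\overline N_1)$ and we are done, or $\gamma'$ fixes some edge of $T$ and therefore lies in a $\pi_1(\overline N_1)$-conjugate of $\pi_1(S)$. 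After conjugating within $\pi_1(\overline N_1)$, I may assume $\gamma, \gamma' \in \pi_1(S)$, that is, they represent curves on $S$. The sub-annuli of the homotopy lying in $\overline N_2$ are then essential annuli with boundary on $S$; by acylindricity of $(\overline N_2, P_2)$ each is boundary-parallel in $\overline N_2$, so it can be pushed across $S$ back into $\overline N_1$, eventually producing a homotopy wholly within $\overline N_1$.

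The principal technical point is the passage from the embedded form of acylindricity in the paper's definition to a form applicable to the (a priori merely immersed) sub-annuli produced by the homotopy. This is handled via the characteristic submanifold theory of Jaco--Shalen and Johannson, or equivalently by verifying that the $\pi_1(N^n)$-action on $T$ is $k$-acylindrical in the sense of Sela, so that combinatorial normal-form arguments in the amalgamated product apply directly.
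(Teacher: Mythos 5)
The paper offers no argument for this lemma at all: it is stated with a tombstone and treated as an immediate consequence of the acylindricity of $(\overline N_1,P_1)$ and $(\overline N_2,P_2)$. So there is no written proof to match yours against, and your second reduction is essentially the argument the authors are implicitly invoking. The Bass--Serre analysis of the amalgam $\pi_1(N^n)=\pi_1(\overline N_1)*_{\pi_1(S)}\pi_1(\overline N_2)$ is correct, the reduction to elements of the edge group $\pi_1(S)$ is correct, and you rightly flag that the embedded acylindricity in the paper's definition must be upgraded to a statement about essential \emph{maps} of annuli, via Johannson's enclosing theorem or, equivalently, acylindricity of the action on the tree. That is indeed the only delicate point in that half, and you have located it.

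The gap is in your first reduction. Incompressibility of $\D N^n$ in $M_n$ only lets you delete the inessential circles of $f^{-1}(\D N^n)$; it gives no control over the essential sub-annuli of $A$ that map into the closure of $M_n\setminus N^n$, and you assert without justification that these are boundary-parallel. Nothing in the hypotheses of Proposition \ref{prop:gluings} constrains $M_n\setminus N^n$: the embedding $N^n\hookrightarrow M_n$ is only assumed to be $\pi_1$--injective with the curves of $P_1\cup P_2$ parabolic. A priori the complement could contain an essential annulus joining two components of $\D N^n$ (for instance, two cusp tori of $N^n$ lying in the same cusp of $M_n$), which would conjugate two peripheral elements of $\pi_1(\overline N_1)$ with no homotopy taking place inside $N^n$. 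To close this you need either (i) a Bass--Serre analysis of the splitting of $\pi_1(M_n)$ along $\D N^n$ showing that the only elements of $\pi_1(\overline N_1)$ that can be conjugated into an edge group are peripheral --- conjugacy into a component of $\D\overline N_2\setminus S'$ being excluded by the acylindricity of $(\overline N_2,P_2)$ --- followed by a separate treatment of the peripheral case, or (ii) the additional fact, true in the paper's actual application but not among the stated hypotheses, that the complement of $N^n$ in $M_n$ is itself acylindrical (in the proof of Proposition \ref{prop:links} the knot $\kappa_n$ is chosen precisely so that $\BS^3\setminus\iota'(X_{\psi^n\circ\phi})$ contains no essential disks, annuli or M\"obius bands). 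As written, your cut-and-paste does not go through in the region outside $N^n$.
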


Consider now the coverings $M_n^1$ and $M_n^2$ of $M_n$ corresponding
to the subgroups $\pi_1(\overline N_1)$ and $\pi_1(\overline N_2)$ of
$\pi_1(M_n)$ 
respectively.  Since $(\overline N_1,P_1)$ and $(\overline N_2,P_2)$
are acylindrical, Thurston's compactness theorem (see \cite{Kap})
applies.  Hence, passing to a subsequence we may assume that the two
sequences $\{M_n^1\}$ and $\{M_n^2\}$ converge algebraically.  Let
$M_G$ be the algebraic limit of the sequence $\{M_n^1\}$.  Every curve
in $P_1$ represents a parabolic element in $M_G$ because this is the
case in $M_n^1$ for all $n$.  On the other hand, fix a simple closed
curve $\alpha \subset S' \subset \D\overline N_2 \setminus P_2$.  The
compactness of the sequence $\{M_2^n\}$ implies that there is some $L$
with $l_{M_2^n}(\alpha) \le L$ for all $n$.  After the identification
$\phi \circ \psi^n$, the curve $\alpha$ becomes the curve
$\psi^{-n}(\phi^{-1}(\alpha)) \subset S \subset \D\overline N_1
\setminus P_1$.  In particular, we have
$$l_{M_1^n}(\psi^{-n}(\phi^{-1}(\alpha))) \le L\ \ \ \hbox{for all
n.}$$ Let $\lambda$ be the repelling lamination of $\psi$ and observe
that
$$\lim_{n\to\infty} \psi^{-n} (\phi^{-1}(\alpha)) = \lambda\ \ \
\hbox{in}\ \mathcal{PML}(S).$$ A small variation of the argument used
in the proof of Proposition \ref{prop:limit-degenerate} shows that
$\lambda$ is not realized in $M_G$.  Again as in the proof of
Proposition \ref{prop:limit-degenerate}, we derive from Proposition
\ref{detect-ending} that $P_1$ is the parabolic locus of $M_G$ and
that its unique geometric end is degenerate with ending lamination
$\lambda$.  Also as in the proof of Proposition
\ref{prop:limit-degenerate}, Proposition \ref{alg-strong} shows that
the sequence $\{M_n\}$ does not only converge algebraically but also
geometrically to $M_G$.

We have proved that the sequence of covers $M_n^1$ of $M_n$ has the
desired geometric limit.  We claim that this is also the case for the
original sequence $\{M_n\}$.  Passing perhaps to a further
subsequence, we may assume that the manifolds $\{M_n\}$ converge
geometrically to some manifold $M_G'$ covered by $M_G$.  Since the
unique geometric end of $M_G$ is degenerate, we deduce from the
Thurston--Canary covering theorem that the cover $M_G\to M_G'$ is
finite--to--one. We claim that this cover is trivial. Otherwise, there
are two elements $\gamma,\gamma'\in\pi_1(M_G)=\pi_1(\overline N_1)$
which are conjugated in $\pi_1(M_G')$ but not in $\pi_1(M_G)$. In
particular, we deduce from the geometric convergence of the sequence
$\{M_n\}$ to $M_G$ that for all sufficiently large $n$ the elements
$\gamma,\gamma' \in \pi_1(\overline N_1) \subset \pi_1(M_n)$ are also
conjugated in $\pi_1(M_n)$. This contradicts Lemma \ref{lem:blahblah}.
Thus the covering $M_G\to M_G'$ is trivial and hence $M_G' = M_G$. In
other words, $M_G$ is a geometric limit of the sequence $\{M_n\}$.
\end{proof}

\section{Constructing some limits of knot complements}\label{sec:knots}

The goal of this section is to prove Proposition \ref{prop:reduction}.
In order to do so, we start studying certain multicurves in the
boundary of a compression body.  These multicurves are used to prove
first a version of Proposition \ref{prop:reduction} for links.  The
desired knots are then obtained from these links by Dehn-filling.

\subsection{Curves on a compression body}
We prove now a topological fact used in the proof of Proposition
\ref{prop:links}.  The setting is the following: Let $\Sigma$ be a
closed surface and $C$ the compression body obtained by gluing $\Sigma
\times [0,1]$ and $\BS^1 \times \BD^2$ along disks $D_1 \subset \Sigma
\times \{1\}$ and $D_2 \subset\D(\BS^1\times \BD^2)$.  We denote by
$\D_eC$ the compressible boundary component of $C$, by $\D_iC$ the
incompressible one, and by $D$ the disk $D_1=D_2\subset C$.

Fix a simple curve $\alpha \subset \D_eC$ disjoint from $D$, contained
in the component of $C \setminus D$ corresponding to $\BS^1 \times
\BD^2$, representing a generator of $\BZ = \pi_1(\BS^1 \times
\BD^2)$. We will use repeatedly the following fact, which is easily
verified by considering the two components of $C\setminus D$.

\begin{lem}\label{lem:ess-disks}
	The only embedded essential disks in $C\setminus D$ are isotopic to
	$D$ or intersect $\alpha$. \qed
\end{lem}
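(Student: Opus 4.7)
The plan is to exploit the decomposition $C = C_1 \cup_D C_2$, where $C_1 = \Sigma \times [0,1]$ and $C_2 = \BS^1 \times \BD^2$, and analyze the two pieces separately. Any embedded disk $E$ in $C \setminus D$ is disjoint from $D$ and is therefore contained entirely in one of $C_1$ or $C_2$; these two cases are then handled by separate arguments.

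Suppose first that $E \subset C_1 = \Sigma \times [0,1]$. Since $\Sigma$ has positive genus, the boundary of $\Sigma \times [0,1]$ is incompressible, so every properly embedded disk is boundary-parallel; in particular $\partial E$ bounds a disk $\Delta$ in $\partial(\Sigma \times [0,1])$, and $E$ cobounds a $3$-ball with $\Delta$. Because $E$ is disjoint from $D_1$, the boundary $\partial E$ is disjoint from $\partial D_1$, so $\Delta$ either (i) is disjoint from $D_1$, in which case $\Delta$ descends to a disk in $\partial C$ with boundary $\partial E$ and $E$ is inessential in $C$, or (ii) contains $D_1$, in which case the ball bounded by $E \cup \Delta$ exhibits an isotopy in $C$ from $E$ to $D$.

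Now suppose $E \subset C_2 = \BS^1 \times \BD^2$. If $\partial E$ is inessential in $\partial(\BS^1 \times \BD^2)$, the same argument as above, with $D_2$ in place of $D_1$ and using irreducibility of the solid torus, shows that $E$ is either isotopic to $D$ or inessential in $C$. If instead $\partial E$ is essential on the torus $\partial(\BS^1 \times \BD^2)$, then $E$ is a meridian disk, so its boundary represents the meridian class on the boundary torus. By hypothesis $\alpha$ is a simple closed curve on this torus representing a generator of $\pi_1(\BS^1 \times \BD^2) = \BZ$, hence has algebraic intersection number $\pm 1$ with the meridian, and so $\partial E \cap \alpha \neq \emptyset$. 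Therefore $E$ meets $\alpha$, completing the proof.

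No serious obstacle is anticipated: the entire argument rests on the two standard facts that $\Sigma \times [0,1]$ has incompressible boundary (so every properly embedded disk in it is boundary-parallel) and that any meridian of a solid torus intersects any curve representing a generator of the fundamental group. The only mild subtlety is keeping track of whether the bounding disk $\Delta$ contains $D_i$, in order to distinguish between the disk being isotopic to $D$ and being honestly inessential.
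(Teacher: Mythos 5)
Your proof is correct and follows exactly the route the paper intends: the paper offers no written proof, merely remarking that the lemma "is easily verified by considering the two components of $C\setminus D$," which is precisely the case analysis you carry out (boundary-parallel disks in $\Sigma\times[0,1]$, and inessential versus meridian disks in $\BS^1\times\BD^2$). Your handling of the only real subtlety --- whether the bounding disk $\Delta$ contains $D_i$, which distinguishes "inessential in $C$" from "isotopic to $D$" --- is the right one and matches how the lemma is invoked in the proofs of Claims 1 and 4.
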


Let $\gamma$ be any simple closed curve in $\D_eC$ which intersects
$\alpha$ exactly once, with $i(\gamma,\D D ) >0$, and such that, after
isotoping $\gamma$ so that it intersects $\D D$ minimally, $\gamma
\cap (\Sigma \times \{1\} \setminus D_1)$ contains at least two
nonisotopic properly embedded arcs.  Here $i(\cdot,\cdot)$ is the
geometric intersection number.  Finally, denote by $\beta$ the
boundary of a regular neighborhood of $\alpha \cup \gamma$. We prove:

\begin{prop}\label{prop:curves}
Let $\alpha$ and $\beta$ be as above.  Then there exists a
non-separating curve $\eta \subset \D_iC$ such that $(C, \CN(\alpha
\cup \beta \cup \eta))$ is acylindrical.
\end{prop}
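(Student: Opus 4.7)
The plan is to choose $\eta$ carefully and then verify the three acylindricity conditions for $(C, \CN(\alpha \cup \beta \cup \eta))$. Let $\gamma_1, \dots, \gamma_k$ (with $k \geq 2$) denote the pairwise non-isotopic properly embedded arcs of $\gamma \cap (\Sigma \times \{1\} \setminus D_1)$ guaranteed by the hypothesis. I would pick $\eta \subset \Sigma = \D_i C$ to be a non-separating simple closed curve such that $\eta$ together with these arcs sufficiently fills $\Sigma$ in the following sense: every essential simple closed curve $\gamma_0 \subset \Sigma$ disjoint from $\eta$ must, after isotopy off $D_1$, essentially intersect some arc $\gamma_i$. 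The hypothesis of at least two non-isotopic arcs gives enough combinatorial structure on $\Sigma \setminus D_1$ for such an $\eta$ to exist; non-separability can be ensured via a band-sum adjustment with an auxiliary essential curve without disturbing the filling property.

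For condition (1) (incompressibility of $\D C \setminus \CN(\alpha \cup \beta \cup \eta)$): a compressing disk with boundary on $\D_i C \setminus \eta$ is ruled out by incompressibility of $\Sigma$ in $C$. A compressing disk with boundary on $\D_e C \setminus (\alpha \cup \beta)$ must, by Lemma~\ref{lem:ess-disks}, be isotopic to $D$; but $\D D$ is disjoint from $\alpha$ and transverse to $\gamma$ with $i(\D D, \gamma) > 0$, so each such crossing contributes a crossing of $\D D$ with $\beta = \D \CN(\alpha \cup \gamma)$, forcing $i(\D D, \beta) > 0$ and contradicting disjointness. For condition (2) (no essential annuli), I would classify essential annuli $(A, \D A) \subset (C, \D C \setminus \CN(\alpha \cup \beta \cup \eta))$ by how $\D A$ distributes across the two components of $\D C$. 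Annuli with both boundaries on $\D_i C$ are boundary-parallel since $\pi_1(\Sigma)$ is a free factor of $\pi_1(C) = \pi_1(\Sigma) * \BZ$, whence conjugate simple closed curves in $\Sigma$ are isotopic. Spanning annuli are isotopic to $\gamma_0 \times I$ for some essential $\gamma_0 \subset \Sigma$; the requirement $\D A \subset \D C \setminus \CN(\alpha \cup \beta \cup \eta)$ forces $\gamma_0$ to be disjoint from both $\eta$ and the arcs $\gamma_i$, which contradicts the filling property (noting that $\alpha$ is automatically avoided since it sits in the torus part of $\D_e C$). Annuli with both boundaries on $\D_e C$ reduce, via an innermost-arc argument along $D$ using Lemma~\ref{lem:ess-disks}, to pieces in $\Sigma \times I$ (boundary-parallel or spanning) or in $S^1 \times D^2$ (boundary-parallel, since essential annuli in a solid torus are boundary-parallel). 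Condition (3) (no essential M\"obius bands) is handled analogously: the boundary of a tubular neighborhood of an embedded M\"obius band in the orientable $C$ is an annulus with boundary in $\D C \setminus P$, whose essentiality would contradict the annulus analysis, forcing the M\"obius band to be boundary-parallel.

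The main obstacle is the case of essential annuli with both boundaries on $\D_e C$: these can cross the compressing disk $D$ in intricate ways and their pieces in $S^1 \times D^2$ may wind around the core. The cut-and-paste along $D$ must be executed carefully — using innermost circles (which bound disks on both sides by irreducibility of $C$) and then innermost spanning arcs — to reduce every piece to either a boundary-parallel annulus in one factor or a spanning annulus already handled. A secondary subtlety is simultaneously arranging the filling property and non-separability of $\eta$; this I would resolve by first constructing a (possibly separating) filling curve using the at-least-two-non-isotopic-arcs hypothesis, then band-summing with a suitable essential curve to make $\eta$ non-separating while preserving the intersection pattern with $\bigcup_i \gamma_i$.
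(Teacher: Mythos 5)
Your overall strategy (kill disks and annuli meeting only $\D_eC$ using $\alpha\cup\beta$, then choose $\eta$ on $\D_iC$ to kill the rest) is the right shape, but there is a genuine gap in your treatment of essential annuli with both boundary components on $\D_eC$. After isotoping such an annulus $A$ off $D$ into the $\Sigma\times[0,1]$ piece, you dismiss it as ``boundary-parallel.'' It is indeed parallel in $\Sigma\times[0,1]$ to an annulus $A'\subset\Sigma\times\{1\}$, but this does \emph{not} make it inessential in $C$: if the parallelism region contains the attaching disk $D_1$ (equivalently $D_1\subset A'$), the region between $A$ and $\D C$ in $C$ contains the solid torus and is not a product, so $A$ is essential in $C$ while having boundary entirely in $\D_eC\setminus(\alpha\cup\beta\cup\eta)$ a priori. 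Ruling out this configuration is exactly where the hypothesis that $\gamma\cap(\Sigma\times\{1\}\setminus D_1)$ contains two non-isotopic essential arcs is needed: since $\beta$ misses $\D A$ but crosses $\D D$, the curve $\gamma$ is trapped in the component $X$ of $\D_eC$ cut along $\D A$ containing $\D D$, and $Y=X\cap(\Sigma\times\{1\}\setminus D_1)$ is a pair of pants in which all arcs of $\gamma\cap Y$ end on $\D D$ --- so $\gamma$ cannot contribute two non-isotopic arcs there. Your write-up never invokes the two-arcs hypothesis in the $\D_eC$-annulus analysis (you only use the arcs to constrain $\eta$ for spanning annuli), which is the telltale sign of the missing case. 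A smaller instance of the same sketchiness appears in your incompressibility check, where you apply Lemma \ref{lem:ess-disks} to a compressing disk without first running the innermost-disk reduction to make it disjoint from $D$.

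On the choice of $\eta$, your route also diverges from the paper's. You tie $\eta$ to the arcs of $\gamma$, demanding that $\eta$ together with those arcs fill; since the subsurface filled by $D_1\cup\bigcup_i\gamma_i$ may be small, $\eta$ must single-handedly fill a possibly large complementary subsurface together with its boundary --- achievable for a single simple closed curve, but it needs an argument, as does the compatibility with non-separation. The paper instead first proves that $\alpha\cup\beta$ alone meets every essential disk, M\"obius band, and annulus with boundary in $\D_eC$, and then observes that all remaining essential annuli lie in the characteristic (Seifert) part $S$ of the JSJ splitting of $(C,\CN(\alpha\cup\beta))$, whose frontier annuli all touch $\D_iC$; any $\eta$ filling $\D_iC$ together with $X=\D(\D_iC\cap S)$ then works. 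That formulation avoids classifying spanning and $\D_iC$--$\D_iC$ annuli by hand. Your direct classification could be completed, but as written the $\D_eC$--$\D_eC$ case above is a real hole, not a routine omission.
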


\begin{proof}
We first prove prove that the multicurve $\alpha \cup \beta \subset
\D_eC$ intersects every properly embedded essential disk, M\"obius
band, and annulus $(A,\D A) \subset (C,\D_eC)$.
\medskip

\noindent{\bf Claim 1.} The multicurve $\alpha\cup\beta$ intersects
every properly embedded essential disk in $C$.

\begin{proof}[Proof of Claim 1]

Let $\Delta$ be a properly embedded essential disk in $C$ such that
$\D \Delta$ does not meet $\alpha \cup \beta$.

If $\Delta \cap D = \emptyset$, then $\Delta$ is an embedded essential
disk in $C\setminus D$, hence by Lemma \ref{lem:ess-disks}, $\Delta$
is isotopic to $D$ or meets $\alpha$.  By assumption it does not meet
$\alpha$.  But $i(\beta, \D D) = 2i(\gamma, \D D) > 0$, by choice of
$\beta$ (and $\gamma$), hence $\Delta$ cannot be isotopic to $D$.
Thus $\Delta \cap D \neq \emptyset$.

Consider the intersections $\Delta \cap D$.  These consist of closed
curves and arcs.  Using the irreducibility of $C$, along with an
innermost disk argument, we may assume no components of $\Delta \cap
D$ are closed curves (else they can be isotoped off).  So consider
arcs of intersection.  There is an innermost arc of intersection on
$\Delta$, which bounds a disk $E_1 \subset \Delta$ disjoint from $D$,
and some disk $E_2 \subset D$.  Together, these two disks form a disk
$E$ in $C$.  By pushing $E_2$ off $D$ slightly, we may assume $E$ is
embedded in $C\setminus D$.

If $E$ is not essential, then together $E_1$ and $E_2$ and their
boundary curve bound a ball in $C$.  We may isotope $E_1$ through this
ball to decrease the number of intersections of $\Delta$ and $D$.

So assume $E$ is essential.  Then by Lemma \ref{lem:ess-disks}, $E$ is
isotopic to $D$ or meets $\alpha$.  First, $E$ cannot meet $\alpha$,
for by assumption $E_1$ doesn't meet $\alpha$, and $E_2$ is a subset
of $D$, which does not meet $\alpha$.  So $E$ is isotopic to $D$.  But
then again we may isotope $E_1$ through $D$, reducing the number of
intersections of $\Delta$ and $D$.

Repeating this argument a finite number of times, we find $\Delta \cap
D$ must be empty, which is a contradiction.
\end{proof}

{\bf Claim 2.} The multicurve $\alpha\cup\beta$ intersects every
properly embedded M\"obius band in $(C,\D_eC)$ .

\begin{proof}[Proof of Claim 2]
Let $(M,\D M)\subset(C,\D_eC)$ be a properly embedded M\"obius
band. Since $C$ is orientable, $M$ has to be one-sided. In particular,
the homology class $[M]\in H_2(C,\D_eC;\BZ/2\BZ)$ is non-trivial; by
duality, there is some class $[c]$ in $H_1(C;\BZ/2\BZ)$ with
$[c]\cap[M]=1\mod 2$. The first homology of $C$ is generated by
$H_1(\D_iC)$ and $[\alpha]$. Since $\D_iC\cap M=\emptyset$, we deduce
that $\alpha$ has to intersect $M$.
\end{proof}

{\bf Claim 3.}  The multicurve $\alpha\cup\beta$ intersects every
properly embedded essential annulus $(A, \D A) \subset (C, \D_eC)$
which is disjoint of $D$.

\begin{proof}[Proof of Claim 3]
Let $(A, \D A)$ be an essential annulus in $(C, \D_eC)$, and assume
$\D A \cap (\alpha \cup \beta) = \emptyset$ and $A\cap D=\emptyset$.

First, if $A$ is contained in the component corresponding to
$\BS^1\times\BD^2$, then because $\D A \cap \alpha = \emptyset$, $\D
A$ must be parallel to $\alpha$ or $\D D$.  In either case, $A$ will
be inessential.  Thus $A$ will lie in the component of $C\setminus D$
corresponding to $\Sigma \times [0,1]$. In particular, the annulus $A$
is isotopic, relative to its boundary, to an annulus $A'$ contained in
$\Sigma\times\{1\}$. The assumption that $A$ is essential in $C$
implies that $D_1\subset A'$ (recall $D_1 \subset \Sigma \times \{1\}$
is the disk $D_1 = D$ in $C$).

Cut open the surface $\D_eC$ along $\D A' = \D A$ and let $X$ be the
component containing $\D D$. Observe that $Y = X \cap (\Sigma \times
\{1\} \setminus D_1)$ is a pair of pants.  By assumption, the curve
$\beta$ is disjoint of $\D A$ and by construction intersects $\D D$.
In particular, $\beta\subset X$.  This implies that the curve $\gamma$
is also contained in $X$.  By assumption, $\gamma \cap (\Sigma \times
\{1\} \setminus D_1)$ contains at least two nonisotopic properly
embedded essential arcs.  Hence $\gamma \cap Y$ contains at least two
nonisotopic arcs.  This is impossible, since $Y$ is a pair of pants
and all the arcs in $\gamma \cap Y$ end in the same boundary
component.  This contradiction concludes the proof of Claim 3.
\end{proof}

{\bf Claim 4.}  The multicurve $\alpha \cup \beta$ intersects every
properly embedded essential annulus $(A, \D A) \subset (C, \D_eC)$.

\begin{proof}[Proof of Claim 4]
Let $(A, \D A)$ be an essential annulus in $(C, \D_eC)$, and assume
$\D A \cap (\alpha \cup \beta) = \emptyset$.  By Claim 3, we may
assume that $A \cap D \neq \emptyset$.  As in the proof of Claim 1,
consider the arcs and curves of intersection.

Because $A$ is essential, and any curve of intersection bounds a disk
in $D$, it must bound a disk in $A$, and thus we may isotope $A$ so
that there are no closed curves of intersection with $A$.

Suppose there is an arc $\tau$ of $A \cap D$ such that both endpoints
of $\tau$ lie on the same component of $\partial A$.  Then $\tau$,
along with a portion of $\D A$, bounds a disk in $A$.  The only
components of $A\cap D$ that lie in that disk will also have endpoints
on the same component of $\D A$.  Thus we may assume $\tau$ is an
innermost arc of intersection, which, together with a portion of $\D
A$, bounds a disk in $A$ disjoint from $D$.  The arc $\tau$ also
bounds a disk on $D$.  The union of these two disks can be pushed off
of $D$ slightly to give an embedded disk $E$ in $C\setminus D$.

Now, if $E$ is inessential, we can isotope $A$ through $D$ to reduce
the number of intersections.  So assume $E$ is essential.  Then by
Lemma \ref{lem:ess-disks}, $E$ is isotopic to $D$ or meets $\alpha$.
$E$ cannot meet $\alpha$, for neither $A$ nor $D$ meet $\alpha$.  Thus
$E$ is isotopic to $D$.  But then again we may isotope $A$ through $D$
and reduce the number of intersections $A\cap D$.

So we may assume no arcs of $A\cap D$ have both endpoints on the same
component of $\D A$.


Thus assume each arc of intersection has endpoints on distinct
components of $\D A$.  Let $\tau_1$ and $\tau_2$ be consecutive arcs
of intersection, such that they bound a disk on $A$ disjoint from $D$.
Note $\tau_1$ and $\tau_2$ will also bound disjoint disks on $D$.
Putting these disks together and pushing off $D$ slightly, again we
have a disk $E$ embedded in $C\setminus D$.

If $E$ is essential in $C\setminus D$, then again Lemma
\ref{lem:ess-disks} gives a contradiction:  $E$ cannot meet $\alpha$
because $A$ and $D$ do not, and if $E$ is isotopic to $D$ then we can
isotope $A$ through $D$, reducing the number of intersections.

So $E$ is inessential in $C\setminus D$.  But then we may isotope the
portion of $A$ between $\tau_1$ and $\tau_2$ to lie on the boundary
$\D_eC$.  This is true of any consecutive arcs $\tau_1$ and $\tau_2$
on $A$.  Since $A$ can be cut into pieces, each of which is bounded by
consecutive arcs, $A$ is not an essential annulus.  This contradiction
finishes the proof of the claim.
\end{proof}

We can now conclude the proof of Proposition \ref{prop:curves}. Let
$\CN(\alpha \cup \beta)$ be a regular neighborhood of the multicurve
$\alpha \cup \beta$.  By Claim 1, $\D C \setminus \CN(\alpha \cup
\beta)$ is incompressible.  Assume that $(C, \CN(\alpha \cup \beta))$
is not acylindrical.  Then we can consider the JSJ--splitting of $C$
relative to a regular neighborhood of $\CN(\alpha \cup \beta)$; let
$S$ be the union of the Seifert pieces.  Since $C$ is not itself
Seifert fibered, $S$ is a proper subset of $C$.  The boundary of $S$
consists of a collection of properly embedded essential annuli; it is
easy to see that this implies that the interior boundary $\D_iC =
\Sigma \times \{0\}$ of $C$ is not contained in $S$.  On the other
hand, all the boundary annuli of $S$ have at least one of their
boundary components in $\D_iC$.  Let $X$ be the boundary in $\D_iC$ of
$\D_iC \cap S$.  Any curve $\eta$ which, together with $X$, fills the
surface $\D_iC$ has the property of intersecting every properly
embedded essential annulus in $(C, \CN(\alpha \cup \beta))$. In
particular, $(C, \CN(\alpha \cup \beta \cup \eta))$ is acylindrical;
this concludes the proof of Proposition \ref{prop:curves}.
\end{proof}

\subsection{Limits of link complements}

In this section we prove a version of Proposition \ref{prop:reduction}
for links instead of knots.  As mentioned above, we will obtain the
desired knots needed to prove Proposition \ref{prop:reduction} using
Dehn filling.  In order to be able to do so, we need to construct
links satisfying certain conditions.  One of those conditions is that
a certain slope has a long \emph{normalized length}, as in \cite{HK}.

\begin{defi*}
Let $M$ be a hyperbolic 3--manifold with a rank 2 cusp $T$, and let
$H$ be an embedded horoball neighborhood of the cusp with boundary $\D
H$.  Let $s$ be a \emph{slope} on the cusp , that is, an isotopy class
of simple closed curves on $T$.  The \emph{normalized length} of $s$
is defined to be the length of a geodesic representative of $s$ on $\D
H$, divided by the square root of the area of the torus $\D H$.  Note
that this definition is independent of choice of horoball neighborhood
$H$.
\end{defi*}

We prove now:

\begin{prop}\label{prop:links}
Let $\overline N$ be a compact irreducible and atoroidal submanifold
of $\BS^3$ with connected boundary of genus at least 2, and let $\eta
\subset \D\overline N$ be a simple closed curve with $(\D\overline N,
\CN(\eta))$ acylindrical and $\D\overline N \setminus \eta$
connected.  Then there is a sequence of hyperbolic link complements
$M_{L_i}$ converging geometrically to a hyperbolic manifold $N_\eta$
homeomorphic to the interior of $\overline N$ and such that $\eta$
represents a parabolic element in $N_\eta$.

Moreover, the links $L_i$ have exactly four components, $\alpha_i$,
$\beta_i$, $\kappa_i$, and $\eta_i$.  The normalized length of the
standard meridian on $\eta_i$ is increasing in an unbounded manner,
and $\alpha_i$ and $\beta_i$ bound disjoint disks in the complement of
$\eta_i$, $\alpha_i$ and $\beta_i$ in $\BS^3$.
\end{prop}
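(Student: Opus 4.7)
The plan is to realize the link complements as hyperbolic structures on topological gluings of $\overline N$ with a standard compression body, with the gluing varying by powers of a pseudo-Anosov; the geometric convergence will then follow from Proposition~\ref{prop:gluings}.

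First I would build the compression-body piece using Proposition~\ref{prop:curves}. Let $\Sigma$ be a closed surface of the same genus as $\D\overline N$, let $C$ be the compression body obtained from $\Sigma \times [0,1]$ by attaching a 1-handle as in the setup of that proposition, and let $\alpha,\beta \subset \D_eC$ and $\kappa \subset \D_iC$ be the curves it produces, so that $(C, \CN(\alpha\cup\beta\cup\kappa))$ is acylindrical. Since $\kappa$ is non-separating in $\D_iC$ and $\eta$ is non-separating in $\D\overline N$ (because $\D\overline N\setminus\eta$ is connected), I fix a homeomorphism $\phi\co S\to S'$ between $S := \D\overline N \setminus \CN(\eta)$ and $S' := \D_iC \setminus \CN(\kappa)$, both connected surfaces of equal genus with two boundary circles. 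Fix also a pseudo-Anosov mapping class $\psi\co S\to S$.

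Next I would realize the gluings $N^i := \overline N \cup_{\phi \circ \psi^i} C$ as the complements of explicit 4-component links $L_i \subset \BS^3$. Using the embedding $\overline N \hookrightarrow \BS^3$, attach a 1-handle inside $\BS^3 \setminus \overline N$ whose core is an unknotted arc with endpoints on $\D\overline N$; this embeds a copy of $C$ alongside $\overline N$ in $\BS^3$, and with suitable choices the meridian $\alpha$ of this 1-handle bounds a cocore disk as well as a disk in $\BS^3\setminus H$ (with $H := \overline N \cup \text{1-handle}$), while $\beta$ can be arranged to bound a disjoint disk in the complement of $\alpha \cup \eta \cup \kappa$. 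Transporting $\psi^i$ to $\D\overline N$ via $\phi$, set $\kappa_i := \psi^i(\kappa)$ and define $L_i := \alpha \cup \beta \cup \kappa_i \cup \eta$; replacing $\kappa$ by $\kappa_i$ implements pre-composition of the gluing with $\psi^i$, so that $\BS^3 \setminus L_i$ contains $N^i$ as a $\pi_1$-injective submanifold with every curve in $P_1 \cup P_2$ parabolic. By Thurston's geometrization theorem for Haken manifolds, each $\BS^3 \setminus L_i$ carries a complete finite-volume hyperbolic structure $M_{L_i}$.

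With this setup, the hypotheses of Proposition~\ref{prop:gluings} hold for $(\overline N_1, P_1) = (\overline N, \CN(\eta))$ and $(\overline N_2, P_2) = (C, \CN(\alpha\cup\beta\cup\kappa))$: both pairs are acylindrical (by hypothesis and by Proposition~\ref{prop:curves} respectively), $S$ is connected, and $S' \subset \D C\setminus P_2$. That proposition yields a geometric limit $N_\eta$ of $\{M_{L_i}\}$ homeomorphic to the interior of $\overline N$, with parabolic locus $\CN(\eta)$ and a single degenerate end. For the normalized-length condition, as $M_{L_i} \to N_\eta$ geometrically the cusp torus $T_i$ at $\eta_i$ converges to the cusp torus at $\eta$ in $N_\eta$; however, the slope on $T_i$ corresponding to the standard $\BS^3$-meridian of $\eta_i$ is twisted by the action of $\psi^i$ on the dual framing, so that in the limit cusp it becomes a primitive slope wrapping an unbounded number of times, and after passing to a subsequence its normalized length tends to infinity. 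The main obstacle I anticipate is the topological realization step, namely verifying that replacing $\kappa$ by $\psi^i(\kappa)$ in the link truly yields the twisted gluing $\overline N \cup_{\phi\circ\psi^i} C$ as $\BS^3 \setminus L_i$, while preserving the unlinked status of $\alpha \cup \beta$ with disjoint spanning disks avoiding $\eta$ and $\kappa_i$; ruling out essential tori in $\BS^3\setminus L_i$ (hence hyperbolicity) also requires care. A secondary subtlety is the normalized-length calculation, which depends on tracking how the $\BS^3$-framing at $\eta_i$ evolves under $\psi^i$ in order to deduce unbounded growth.
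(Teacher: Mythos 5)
Your overall strategy is the paper's: glue $\overline N$ to the compression body of Proposition \ref{prop:curves} along $\phi\circ\psi^i$, realize the result inside $\BS^3$, and feed the sequence to Proposition \ref{prop:gluings}. The execution, however, has a genuine gap centered on the fourth link component. In the paper the four components are $\eta$ itself (the interface curve, identified with the curve produced by Proposition \ref{prop:curves} and then drilled), the images of $\alpha$ and $\beta$ from $\D_eC$, and a knot $\kappa_i$ chosen in the \emph{exterior} region $\BS^3\setminus\iota'(X_{\psi^i\circ\phi})$ so as to meet every properly embedded essential disk, annulus and M\"obius band there. That last choice is what makes $\BS^3\setminus L_i$ irreducible and atoroidal, hence hyperbolic by Thurston; nothing in your link controls the exterior of $\overline N\cup C$ in $\BS^3$ (which can easily contain essential tori or reducing spheres, e.g.\ if $\overline N$ sits in $\BS^3$ in a complicated way), and you flag hyperbolicity as ``requiring care'' without supplying the mechanism. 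Your $\kappa_i:=\psi^i(\kappa)$ cannot fill this role: the curve $\kappa$ is exactly the one you have already identified with $\eta$ and drilled, so $\psi^i(\kappa)$ is isotopic to $\eta$ (a mapping class of $S$ preserves $\D S$), and $\alpha\cup\beta\cup\kappa_i\cup\eta$ would have two parallel components. Relatedly, the $i$-dependence sits in the wrong place: keeping the gluing fixed and drilling a $\psi^i$-translate of a fixed curve does not produce $\overline N\cup_{\phi\circ\psi^i}C$, since $\psi^i$ extends to a homeomorphism of neither side. In the paper the twisting is recorded by the embedded position of $\beta_{\psi^i\circ\phi}$ in $\BS^3$ --- that is the component that genuinely changes with $i$, while $\eta$, $\alpha$ and the embedding $\iota'$ stay fixed.

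A secondary point: your mechanism for the normalized length is not the right one. The meridian of $\eta$ is not twisted by $\psi^i$; the curve $\eta$ and its $\BS^3$-framing are the same for all $i$. The paper's argument is that in the limit $N_\eta$ the cusp at $\eta$ is a rank-one cusp; the standard meridian meets $\D\overline N$ twice and therefore cannot remain inside the almost-isometric image of a large compact core of $N_\eta$ (which is homeomorphic to the interior of $\overline N$), so its length tends to infinity, while curves meeting the meridian once stay of bounded length. This yields unbounded normalized length along the whole sequence, with no need to pass to a subsequence.
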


The construction given in the proof of Proposition \ref{prop:links} is
quite involved.  There are simpler ways to build $N_\eta$ as the
geometric limit of link complements.  However, our choice of link
complements and of $N_\eta$ needs the extra care so that we may turn
these links into knots to prove Proposition \ref{prop:reduction} in
the following section.
\medskip

Before launching the proof of Proposition \ref{prop:links}, let
$\Sigma$ be a closed surface homeomorphic to $\D\overline N$, $C$ the
compression body considered in the previous section and $\alpha, \beta
\subset \D_e C$ and $\eta' \subset \D_iC$ the curves provided by
Proposition \ref{prop:curves}.  Let also $\phi\co \D\overline N \to \D_iC$
be any homeomorphism with $\phi(\eta)= \eta'$.  Denote by $X_\phi$ the
manifold obtained by gluing $\overline N$ and $C$ via $\phi$ and
removing a regular neighborhood of the the curve $\eta = \eta'$.  For
the sake of book--keeping we denote by $\alpha_\phi$ and $\beta_\phi$
the curves in $\D X_\phi$ corresponding to $\alpha$ and $\beta$.
Observe that the pair $(X_\phi, \CN(\alpha_\phi \cup \beta_\phi))$ is
acylindrical.

\begin{lem}\label{lemma:key}
The embedding $\iota\co \overline N \hookrightarrow \BS^3$ extends to an
embedding $\iota'\co X_\phi \hookrightarrow \BS^3$ with the property
that the curves $\iota'(\alpha_\phi)$ and $\iota'(\beta_\phi)$ bound
properly embedded disjoint disks in the closure of $\BS^3 \setminus
\iota'(X_\phi)$.
\end{lem}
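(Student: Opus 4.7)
The plan is to embed $C$ as a thin external collar of $\D\overline N$ in $W:=\overline{\BS^3\setminus\overline N}$ together with a small $1$--handle $h$ attached to this collar, chosen so that the core arc of $h$ is unknotted, i.e.~cobounds an embedded disk with an arc on $\D\overline N$. This will force $\alpha$ to bound a cancellation disk in $\tilde W:=\BS^3\setminus\iota'(\overline N\cup C)=W\setminus h$. Since the regular neighborhood of $\alpha\cup\gamma$ on $\D_eC$ is a once-punctured torus with boundary $\beta$, the free homotopy class of $\beta$ is that of the commutator $[\alpha,\gamma]$; the vanishing of $\alpha$ in $\pi_1(\tilde W)$ then makes $\beta$ null-homotopic there, and Dehn's Lemma, applied after cutting along the first disk, produces a disjoint second disk.

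In detail, I would pick a small disk $d\subset\D\overline N$ disjoint from $\eta$, two disjoint subdisks $D_1^a,D_1^b\subset d$, and an arc $\xi\subset d$ joining them. Take $c\subset W$ to be $\xi$ pushed slightly off $\D\overline N$ into $W$, so that $c\cup\xi$ cobounds a small embedded disk $\Delta\subset W$, and let $h\subset W$ be a thin tubular neighborhood of $c$ with attaching disks $D_1^a\cup D_1^b$. Setting $\iota'(C)$ equal to the union of a thin outer collar of $\D\overline N$ with $h$, the classification of compression bodies lets me identify the abstract $C$ with $\iota'(C)$; since any two longitudes of the solid-torus component of $C$ disjoint from $D$ are related by a self-homeomorphism of $C$ (realized by Dehn twists along the meridian), I may further arrange that the abstract $\alpha$ is sent to the simple closed curve $c'\cup\xi$ on $\D_eC$, where $c'\subset\D h$ is the push-off of $c$ onto the tube boundary. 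Pushing the $c$-side boundary of $\Delta$ onto $c'$ then yields an embedded disk $E_\alpha\subset\tilde W$ with $\D E_\alpha=\iota'(\alpha)$.

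Because $E_\alpha$ exists, $\iota'(\alpha)=1$ in $\pi_1(\tilde W)$, and since $\iota'(\beta)$ is freely homotopic on $\D_eC$ to $[\iota'(\alpha),\iota'(\gamma)]$, it follows that $\iota'(\beta)$ is null-homotopic in $\tilde W$. To obtain disjoint disks, I would cut $\tilde W$ along $E_\alpha$ to form $\tilde W'$; since $E_\alpha$ ``cancels'' the $1$--handle and is non-separating, $\pi_1(\tilde W)=\pi_1(\tilde W')\ast\BZ$ as an HNN extension, and in particular $\pi_1(\tilde W')$ injects into $\pi_1(\tilde W)$. Hence $\iota'(\beta)$, which is disjoint from $\iota'(\alpha)$ on $\D_eC$ and thus lies on $\D\tilde W'$, is also null-homotopic in $\tilde W'$, and Dehn's Lemma applied in $\tilde W'$ produces an embedded disk $E_\beta\subset\tilde W'\subset\tilde W$ bounded by $\iota'(\beta)$, automatically disjoint from $E_\alpha$.

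Finally, choosing $\iota'(\CN(\eta))$ to be a sufficiently small regular neighborhood of $\iota(\eta)\subset\D\overline N$ in $\BS^3$ makes it disjoint from $h$, $E_\alpha$, and $E_\beta$, thanks to the choice $d\cap\eta=\emptyset$. Then $\iota'(X_\phi)=\iota(\overline N)\cup\iota'(C)\setminus\iota'(\CN(\eta))$ is the desired extension, and $E_\alpha,E_\beta$ are the required disjoint properly embedded disks in $\overline{\BS^3\setminus\iota'(X_\phi)}$ bounding $\iota'(\alpha_\phi)$ and $\iota'(\beta_\phi)$. The main technical step I expect is the identification of the abstract $\alpha$ with the concrete cancellation loop $c'\cup\xi$ on $\D_eC$; once this identification is pinned down, the rest of the argument is a routine application of Dehn's Lemma and basic $3$--manifold topology.
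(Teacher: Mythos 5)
Your proof is correct, and the first half of it is essentially the paper's construction in different clothing: the paper cuts $X_\phi$ along the disk $D$ into $Y_\phi\cong\overline N\setminus\CN(\text{push-in of }\eta)$ and a solid torus $U$, embeds $U$ unknottedly in $\BS^3\setminus\hat\iota(Y_\phi)$ so that $\alpha$ bounds a disk $\Delta$ outside, and joins the pieces by a $1$--handle missing $\Delta$; your ``thin external collar plus boundary-parallel $1$--handle, normalized by meridional Dehn twists so that $\alpha$ becomes the obvious cancellation loop'' produces the same embedding, and your $E_\alpha$ is their $\Delta$. Where you genuinely diverge is the second disk. The paper produces it explicitly in one line: since $i(\alpha,\gamma)=1$ and $\beta=\D\CN(\alpha\cup\gamma)$, the frontier of a regular neighborhood of $\gamma\cup\Delta$ in the complement is a properly embedded disk with boundary $\beta$, automatically disjoint from $\Delta$. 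You instead observe that $\beta$ is freely homotopic to the commutator $[\alpha,\gamma]$, hence null-homotopic once $\alpha$ bounds, and then invoke Dehn's Lemma in the manifold cut along $E_\alpha$ to regain embeddedness and disjointness. Both work; the paper's route is more elementary (no Papakyriakopoulos) and hands you disjointness for free, while yours needs the extra cutting step and the injectivity of $\pi_1(\tilde W')\to\pi_1(\tilde W)$ -- which, by van Kampen, holds whether or not $E_\alpha$ separates, so your appeal to non-separation (true here because $i(\alpha,\gamma)=1$ forces $\alpha$, and hence $E_\alpha$, to be non-separating) is not actually load-bearing. The one step you flag as delicate, pinning $\alpha$ down as the cancellation loop up to meridional twists, is handled correctly: any two simple generators of $\pi_1$ of the solid-torus summand disjoint from $D$ differ by twists along the meridian disk, supported away from $\D_iC$, so the gluing via $\phi$ is unaffected.
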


\begin{proof}
Using the embedding $C \hookrightarrow X_\phi$, identify the disk $D$
with a disk in $X_\phi$, and let $Y_\phi$ be the component of $X_\phi
\setminus D$ containing $\overline N$.

The manifold $Y_\phi$ is homeomorphic to the complement in $\overline
N$ of the regular neighborhood of some curve contained in the interior
of $\overline N$ and isotopic to $\eta$.  In particular, the embedding
$\iota \co \overline N \hookrightarrow \BS^3$ extends to an embedding
$\hat\iota \co Y_\phi \hookrightarrow \BS^3$.

The other component, say $U$, of $X_\phi \setminus D$ containing
$\alpha$, is homeomorphic to $\BS^1 \times \BD^2$.  In particular, we
can embed $U$ in $\BS^3 \setminus \hat\iota(Y_\phi)$ in such a way
that the image of the curve $\alpha$ bounds a properly embedded disk
$\Delta$ in the complement of $U$ and $\hat\iota(Y_\phi)$.

In order to extend these two embeddings to an embedding $\iota'\co
X_\phi \hookrightarrow \BS^3$ we map the 1--handle joining $Y_\phi$
and $U$ to a 1--handle in $\BS^3 \setminus (\hat\iota(Y_\phi) \cup U)$
whose core is disjoint from $\Delta$.

So far, we have an embedding $\iota'\co X_\phi \hookrightarrow \BS^3$
with the properly that $\iota'(\alpha_\phi)$ bounds an embedded disk
$\Delta$ in the complement of the image of $\iota'$.  Recall that the
curve $\beta$ is obtained as the boundary of the regular neighborhood
of $\alpha \cup \gamma$ where $\gamma$ intersects $\alpha$ once.  The
boundary of a regular neighborhood in $\BS^3 \setminus \iota'(X_\phi)$
of $\iota'(\gamma) \cup \Delta$ is a properly embedded disk with
boundary $\beta$, which is disjoint of $\Delta$.
\end{proof}

\begin{bem}
Neither the constructed embedding $\iota'$, nor the image of the curve
$\alpha_\phi$ depend on $\phi$.  However, the curve
$\iota'(\beta_\phi)$ is very sensitive to $\phi$.
\end{bem}

We are now ready to prove Proposition \ref{prop:links}:

\begin{proof}[Proof of Proposition \ref{prop:links}]
Choose a homeomorphism $\phi$ as above once and for ever.  Choose a
mapping class $\psi \co \D_iC \to \D_iC$ with $\psi(\eta') = \eta'$
and whose restriction to $\D_iC \setminus \eta'$ is pseudo-Anosov.
Consider the sequence of manifolds $X_{\psi^n \circ \phi}$.

For all $n$, choose once and for ever a knot $\kappa_n \subset \BS^3
\setminus \iota'(X_{\psi^n \circ \phi})$ which intersects every
properly embedded essential disk, annulus and M\"obius band therein.
Consider the link
$$L_n = \eta \cup \iota'(\alpha_{\psi^n \circ \phi}) \cup
\iota'(\beta_{\psi^n \circ \phi}) \cup \kappa_n$$ By choice of
$\kappa_n$, the complement $M_{L_n} = \BS^3 \setminus L_n$ of this
link is irreducible and atoroidal.  In particular, by Thurston's
hyperbolization theorem (see \cite{Kap,Otal96,Otal-Haken}), $M_{L_n}$
admits a complete hyperbolic metric.

Observe that by construction, $\overline N$ (minus a small regular
neighborhood of the boundary) is a $\pi_1$-injective submanifold of
$M_{L_n}$.  Moreover, it follows from the construction that the
sequences of manifolds $M_{L_n}$ and submanifolds $\overline N \subset
M_{L_n}$ satisfy the conditions of Proposition \ref{prop:gluings}.

It follows that the sequence $\BS^3 \setminus L_n$ has a geometric
limit $N_\eta$ with the following property:
\begin{itemize}
\item[(*)] $N_\eta$ is homeomorphic to the interior of $\overline N$,
$\eta$ is parabolic in $N_\eta$ and the only geometric end of $N_\eta$
is degenerate.
\end{itemize}


It remains only to show that the normalized length of the standard
meridian of the link component corresponding to $\eta$ tends to
infinity.  This is due to the fact that these manifolds approach
$N_\eta$, whose end is degenerate, and hence in the geometric limit,
the link components tending to $\eta$ tend to a rank 1 cusp, which is
an infinite annulus.  This rank 1 cusp has fixed translation length,
but unbounded length in another direction.

We claim that the unbounded direction corresponds to the limit of
standard meridians of $\eta$ in $M_{L_n}$.  That is, note a standard
meridian of $\eta$ in $M_{L_n}$ intersects $\D\overline N$ twice in
$\iota'(X_{\psi^n \circ \phi})$.  The geometric limit $N_\eta$ is
homeomorphic to the interior of $\overline N$, hence a curve meeting
$D\overline N$ twice must have unbounded length.

Because the translation length of the rank 1 cusp is fixed, lengths of
curves meeting the meridian once have bounded length for large $n$ in
$M_{L_n}$.  Hence the normalized length of the meridian must become
arbitrarily long.
\end{proof}

\subsection{Proof of Proposition \ref{prop:reduction}}
The goal of this section should be clear from the title.

\begin{prop}\label{prop:reduction}
Let $\overline N$ be a compact irreducible and atoroidal submanifold
of $\BS^3$ with connected boundary of genus at least 2, and let $\eta
\subset \D\overline N$ be a simple closed curve with $(\D\overline N,
\CN(\eta))$ acylindrical and $\D\overline N \setminus \eta$ connected.
Then there is a sequence of hyperbolic knot complements $\{M_{K_i}\}$
converging geometrically to a hyperbolic manifold $N_\eta$
homeomorphic to the interior of $\overline N$ and such that $\eta$
represents a parabolic element in $N_\eta$.
\end{prop}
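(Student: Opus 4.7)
The plan is to derive Proposition \ref{prop:reduction} from Proposition \ref{prop:links} by performing hyperbolic Dehn filling on three of the four link components of the links $L_i$ provided there, while leaving the cusp corresponding to $\eta_i$ intact. The slopes must be chosen so that (a) the resulting 3--manifold is topologically $\BS^3$ minus a single knot, and (b) the filling slopes have normalized length large enough that Hodgson--Kerckhoff's quantified Dehn surgery theorem ensures the filled manifold is hyperbolic and bilipschitz--close to $M_{L_i}$ on any prescribed compact set.

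To achieve (a), I would first strengthen the proof of Proposition \ref{prop:links} so that $\kappa_i$ is additionally chosen to be unknotted in $\BS^3$ while retaining its intersection properties with essential disks, annuli, and M\"obius bands in $\BS^3\setminus\iota'(X_{\psi^n\circ\phi})$. This is possible because $\kappa_i$ is only required to satisfy an intersection condition, and a small unknot in $\BS^3$ can be modified by band--sums along suitable arcs to meet the required surfaces without becoming knotted in $\BS^3$. Since $\alpha_i$ and $\beta_i$ bound disjoint disks in $\BS^3\setminus(\eta_i\cup\alpha_i\cup\beta_i)$ and are therefore unknots, all three of $\alpha_i,\beta_i,\kappa_i$ are now unknots in $\BS^3$. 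Performing $1/n_\alpha$--surgery on $\alpha_i$, $1/n_\beta$--surgery on $\beta_i$, and $1/n_\kappa$--surgery on $\kappa_i$ simultaneously recovers $\BS^3$ as the ambient space (since $1/n$--surgery on an unknot yields $\BS^3$) and transforms $\eta_i$ into a single knot $K_i$ via the Rolfsen twists around the disks bounded by the surgered components.

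To achieve (b), choose the surgery coefficients $n_\alpha^{(i)},n_\beta^{(i)},n_\kappa^{(i)}$ tending to infinity fast enough that the normalized lengths of the slopes $1/n_\alpha^{(i)},1/n_\beta^{(i)},1/n_\kappa^{(i)}$ on the respective cusps of $M_{L_i}$ all exceed $i$, and that the sum of their reciprocals squared tends to zero as $i\to\infty$. Since the normalized length of the slope $1/n$ on any fixed cusp grows linearly in $|n|$, this is possible. Hodgson--Kerckhoff's theorem then yields, for all $i$ sufficiently large, a hyperbolic structure on the filled manifold $M_{K_i}$ which is $(1+\epsilon_i)$--bilipschitz to $M_{L_i}$ on any compact subset disjoint from the filled cusps, with $\epsilon_i\to 0$. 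Since $(M_{L_i},p_i)\to(N_\eta,p)$ geometrically with basepoints near the (unfilled) $\eta$--cusp by Proposition \ref{prop:links}, a diagonal argument gives $(M_{K_i},p_i)\to(N_\eta,p)$ geometrically. The knot $K_i$ must be hyperbolic for large $i$ since $N_\eta$ has infinite volume and a degenerate end, forcing $M_{K_i}$ to contain arbitrarily large bilipschitz copies of pieces of $N_\eta$, which rules out $K_i$ being trivial, torus, or satellite.

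The main obstacle is the coordinated bookkeeping between the topological construction and the quantitative geometric estimate. On the topological side, one must ensure that $\kappa_i$ can be selected as an unknot in $\BS^3$ without destroying the acylindricity condition required to invoke Proposition \ref{prop:gluings} inside the proof of Proposition \ref{prop:links}; on the geometric side, one must verify that the Hodgson--Kerckhoff hypotheses hold uniformly for three simultaneous fillings on a sequence of link complements whose geometry is itself varying. The normalized length hypothesis on the meridian of $\eta_i$ from Proposition \ref{prop:links} plays an important role in controlling how the $\eta_i$--cusp sits inside $M_{L_i}$ as $i\to\infty$, which is essential for ensuring that the basepoints stay at bounded distance from the unfilled cusp throughout the argument.
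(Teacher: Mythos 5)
Your plan diverges from the paper's proof in a way that introduces a genuine gap at its central topological step. You keep the cusp of $\eta_i$ and try to recover $\BS^3$ by performing $1/n$--surgery simultaneously on $\alpha_i$, $\beta_i$ and $\kappa_i$, so that $\eta_i$ becomes the knot. But $1/n$--surgery on each of several unknotted components of a link does not in general return $\BS^3$ (the Borromean rings are the standard counterexample). Concretely: a Rolfsen twist along the disk bounded by $\alpha_i$ removes $\alpha_i$ at the cost of twisting everything that meets that disk. By construction $\kappa_i$ must meet the disks bounded by $\alpha_i$ and $\beta_i$ (it is chosen to intersect every essential disk in the complement of $\iota'(X_{\psi^n\circ\phi})$, and those disks are such disks), so after the two twists $\kappa_i$ is replaced by a twisted curve which is generically knotted even if $\kappa_i$ started out unknotted --- twisting an unknot along a disk it meets geometrically but not algebraically is exactly how one produces twist knots. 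By Gordon--Luecke, $1/m$--surgery ($m\neq 0$) on a nontrivially knotted curve never yields $\BS^3$, so your final filling does not produce a knot complement. In addition, the surgery coefficient on $\kappa_i$ changes under the Rolfsen twists by $n_\alpha\,\ell k(\alpha_i,\kappa_i)^2+n_\beta\,\ell k(\beta_i,\kappa_i)^2$ and need not remain of the form $1/m$. Choosing $\kappa_i$ unknotted, as you propose, does not repair any of this.

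The paper's proof is organized precisely to avoid this difficulty, and the knot it produces is (a re-embedding of) $\kappa_i$, not $\eta_i$. One first fills the cusp of $\eta_i$ along its standard meridian: topologically this trivially returns the three--component link complement $\BS^3\setminus(\alpha_i\cup\beta_i\cup\kappa_i)$, and geometrically the unbounded growth of the normalized length of that meridian (the conclusion of Proposition \ref{prop:links} whose role you assign instead to basepoint bookkeeping) is exactly what lets Theorem \ref{Dehn-filling} preserve the geometric limit $N_\eta$. Then, since $\alpha_i$ and $\beta_i$ bound disjoint disks in the complement of $\eta_i\cup\alpha_i\cup\beta_i$, the manifold $\BS^3\setminus(\alpha_i\cup\beta_i)$ is $(\BD^2\times\BS^1)\#(\BD^2\times\BS^1)$, which admits infinitely many fillings of its two cusps yielding $\BS^3$; each such filling re-embeds $M_{\alpha_i\cup\beta_i\cup\kappa_i}$ into $\BS^3$ and exhibits it as the complement of a knot $K_{i,j}$. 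For fixed $i$ these knot complements converge to $M_{\alpha_i\cup\beta_i\cup\kappa_i}$ by classical Dehn filling, and Lemma \ref{lemma:silly} finishes the diagonal argument. If you want to salvage your version, you would have to produce, for each $i$, infinitely many slope triples on $\alpha_i,\beta_i,\kappa_i$ whose filling yields $\BS^3$, with slope lengths tending to infinity; nothing in your proposal supplies such slopes, and the existence of even one (other than the three meridians) is unclear.
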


\begin{proof}
By Proposition \ref{prop:links}, we may assume there is a sequence of
hyperbolic link complements $\{M_{L_i}\}$ converging geometrically to
$N_\eta$ homeomorphic to the interior of $\overline N$ and such that
$\eta$ represents a parabolic element in $N_\eta$.  Moreover, we know
that the link $L_i$ has four components $\alpha_i$, $\beta_i$,
$\kappa_i$, and $\eta_i$, that the normalized length of the standard
meridian of $\eta_i$ grows unbounded, and that $\alpha_i$ and
$\beta_i$ bound disjoint disks in the complement of $\alpha_i$,
$\beta_i$, and $\eta_i$ in $\BS^3$.  To prove the proposition, we will
perform hyperbolic Dehn filling on the link components corresponding
to $\alpha_i$, $\beta_i$, and $\eta_i$.  We will use the following
version, whose proof is written carefully in Aaron Magid's thesis
\cite[Theorem 4.3]{Magid}, of Hodgson and Kerckhoff's quantified
Dehn-filling theorem \cite{HK}.  See also Bromberg \cite[Theorem
2.5]{Bromberg-nonlocal}.

\begin{sat}\label{Dehn-filling}
Let $J>1$ and $\epsilon$ positive and smaller than the Margulis
constant. Then there is some $L > 16\pi^2 + 2\pi/\epsilon$ such that
if $M$ is a finite volume hyperbolic 3--manifold, and $s \subset
\D\overline M$ is a slope with normalized length at least $L$, then:
\begin{enumerate}
\item The interior $M_s$ of the Dehn filled manifold $\overline M(s)$
obtained from $\overline M$ by surgery along $s$ is hyperbolic,
\item the geodesic $\gamma_s \subset M_s$ isotopic to the core of the
attached solid torus has at most length $(2\pi) / (L^2 - 16\pi^2) <
\epsilon$, and
\item there is a $J$-bi-Lipschitz embedding $\phi\co M \setminus
\BT_{s,\epsilon} \hookrightarrow M_s$ of the complement in $M$ of the
component in $M^{<\epsilon}$ corresponding to the filled cusp into the
Dehn filled manifold $M_s$.
\end{enumerate}
\end{sat}

Let $M_{\alpha_i\cup\beta_i\cup\kappa_i}$ be the manifold obtained
from $M_{L_i}$ by performing surgery on $M_{L_i}$ along the standard
meridian of $\eta_i$.  Since the normalized length of this meridian
grows unbounded, we deduce from Theorem \ref{Dehn-filling} that for
all sufficiently large $i$ the manifold $M_{\alpha_i \cup \beta_i \cup
\kappa_i}$ is hyperbolic and that $N_\eta$ is also a geometric limit
of the sequence $\{M_{\alpha_i\cup\beta_i\cup\kappa_i}\}$.

Fixing now $i$, consider the manifold $M_{\alpha_i\cup\beta_i}$
obtained by filing the standard meridian of the knot $\kappa_i$.  In
other words, $M_{\alpha_i \cup \beta_i} = \BS^3 \setminus(\alpha_i
\cup \beta_i)$.  Since the two link components $\alpha_i$ and
$\beta_i$ bound disjoint embedded disks, we see that the manifold
$M_{\alpha_i \cup \beta_i}$ is homeomorphic to the interior connected
sum $(\BD^2 \times \BS^1) \# (\BD^2 \times \BS^1)$ of two solid tori.
In particular, there are infinitely many ways in which one can Dehn
fill each of the two ends of $M_{\alpha_i\cup\beta_i}$ so that we
obtain the 3--sphere $\BS^3$.  Each one of these fillings yields a new
embedding of $M_{\alpha_i \cup \beta_i}$ into $\BS^3$; observe that
the knot $\kappa_i \subset M_{\alpha_i\cup\beta_i}$ is mapped under
these new embeddings to a knot which may not be isotopic to the
original $\kappa_i$.

Hence for each $i$ the hyperbolic manifold $M_{\alpha_i \cup \beta_i
\cup \kappa_i}$ admits infinitely many Dehn fillings of the cusps
associated to $\alpha_i$ and $\beta_i$ so that the obtained manifold
is homeomorphic to the complement of a knot in $\BS^3$.  Choosing a
sequence of more and more complicated Dehn fillings in each of the
cusps $\alpha_i$ and $\beta_i$, we obtain a sequence
$\{M_{K_{i,j}}\}_j$ of knot complements.  We deduce from Theorem
\ref{Dehn-filling}, or even from the classical version of Thurston's
Dehn filling theorem \cite{thurston}, that for each fixed $i$ the
following holds:
\begin{itemize}
\item For all sufficiently large $j$, say for all $j$, the knot
complement $M_{K_{i,j}}$ is hyperbolic, and
\item the sequence of knot complements $\{M_{K_{i,j}}\}$ converges
geometrically to $M_{\alpha_i \cup \beta_i \cup \kappa_i}$.
\end{itemize}
Since by construction the sequence $\{M_{\alpha_i \cup \beta_i \cup
\kappa_i}\}$ converges to $N_\eta$, the claim of Proposition
\ref{prop:reduction} follows now from Lemma \ref{lemma:silly}.
\end{proof}


\section{Convex submanifolds and the proof of Theorem \ref{yair}}\label{sec:yair}

In this section we prove Theorem \ref{yair}.  However, before doing so
we have to establish a (perhaps well--known or possibly of independent
interest) property of certain convex subsets in 3--dimensional
hyperbolic space.  Essentially we prove that the completion of the
complement of the convex hull of any subset of $\D_\infty\BH^3$ is a
locally CAT(-1) space.  This fact allows us to adapt an argument due
to Lackenby to prove Theorem \ref{yair}.  We end this section with a
few unrelated observation on the complements of convex submanifolds of
hyperbolic 3--manifolds.

\subsection{Convex hulls}
Let $X\subset\D_\infty\BH^3$ be a closed subset in the boundary at
infinity of $\BH^3$ and assume that $X$ contains at least three points.
Let $U$ be a connected component of the complement $\BH^3 \setminus
CH(X)$ of the convex hull $CH(X)$ of $X$ in $\BH^3$. We consider $U$
with the induced interior metric and let $\overline U$ be its metric
completion.

\begin{bem}
Recall that the completion of a subspace of a metric space may be
different from its closure.  Think of the open interval $(0,1)$ in the
circle $\BR/\BZ$.
\end{bem}

\noindent We prove:

\begin{prop}\label{convex-cat}
Let $X \subset \D_\infty\BH^3$ be a closed set in the boundary at
infinity of $\BH^3$ containing at least three points, and $U$ a
connected component of the complement $\BH^3 \setminus CH(X)$ of the
convex hull $CH(X)$ of $X$ in $\BH^3$.  The metric completion
$\overline U$ of $U$ is a locally CAT(-1) manifold with totally
geodesic boundary $\D\overline U = \overline U \setminus U$,
under the path metric inherited from $\BH^3$.
\end{prop}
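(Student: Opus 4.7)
My approach will be local: I will identify the isometry type of a neighborhood of each point of $\overline U$ and verify the CAT(-1) condition by analyzing the corresponding space of directions (link). Set $C = CH(X)$. Since $C$ is convex in $\BH^3$, its boundary is a pleated surface with a bending geodesic lamination. At a point $p \in \overline U$ a small neighborhood in the path metric is isometric to one of three models: (i) an open ball in $\BH^3$, when $p \in U$; (ii) a closed half-ball in $\BH^3$, when $p$ lies on a smooth totally geodesic piece of $\D\overline U$; or (iii) a hyperbolic wedge of dihedral angle $\theta_U = 2\pi - \theta_C$ about a geodesic axis, when $p$ lies on an atomically bent leaf, where $\theta_C \le \pi$ is the dihedral angle on the $C$-side. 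Convexity of $C$ forces $\theta_U \ge \pi$, and the local models glue to give a topological 3--manifold structure on $\overline U$ whose topological boundary is identified with the relevant portion of $\D C$.

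To prove the locally CAT(-1) property I will invoke the standard link criterion for piecewise hyperbolic spaces: it suffices to check that the link at every point is CAT(1). For cases (i) and (ii) the link is $S^2$ or a closed hemisphere, trivially CAT(1). For case (iii) the link is a spherical bigon $B_{\theta_U}$, the region on $S^2$ bounded by two great arcs from the north to the south pole making angle $\theta_U$ at each pole. I will show $B_{\theta_U}$ is CAT(1) as follows. At any non-pole point $B_{\theta_U}$ has constant curvature $+1$, so is locally CAT(1). At either pole the link in $B_{\theta_U}$ is a spherical arc of length $\theta_U$, which contains no closed loops and is therefore trivially CAT(1); this is a corner singularity rather than a cone singularity, so the usual obstruction ``cone angle $\ge 2\pi$'' is replaced by a vacuous condition on arcs. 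Since $B_{\theta_U}$ is simply connected, has diameter $\pi$ (realized from pole to pole, with any other pair shortened by routing through a pole), and is locally CAT(1), the local-to-global theorem for CAT(1) spaces yields that $B_{\theta_U}$ is globally CAT(1), completing the CAT(-1) verification for $\overline U$.

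For the total geodesicity of $\D\overline U$, take a local geodesic $\gamma$ in the intrinsic pleated-surface metric of $\D\overline U$. Away from the bending lamination $\gamma$ lies on a totally geodesic plane of $\BH^3$ and is automatically a local geodesic of $\overline U$. If $\gamma$ crosses a bent leaf at a point $p$, then in the link $B_{\theta_U}$ of $p$ the incoming and outgoing directions of $\gamma$ lie on the two opposite boundary arcs at the same ``latitude'' (perpendicular to the edge direction), at intrinsic angular distance $\theta_U \ge \pi$. This $\ge \pi$ inequality in the link is precisely the criterion for $\gamma$ to be a local geodesic of $\overline U$ through $p$, and so geodesics of the pleated surface remain local geodesics of $\overline U$; hence $\D\overline U$ is totally geodesic.

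The main obstacle will be the case where the bending lamination is not a discrete union of weighted leaves but carries a non-atomic transverse measure. In that case no single leaf carries atomic bending and the wedge of model (iii) degenerates to a half-space at every individual point; the pleated surface is nevertheless $C^1$ at such points, because the transverse measure of arbitrarily small transversals tends to zero, so model (ii) applies there and the CAT(-1) analysis still goes through. A technically cleaner alternative would be to first prove the proposition when $X \subset \D_\infty \BH^3$ is a finite set, so that $C$ is an ideal polyhedron and the bending locus is a discrete union of geodesic edges where the above link analysis is immediate, and then deduce the general case by a pointed Gromov--Hausdorff limit argument as $X$ is approximated by finite subsets, invoking the stability of local CAT(-1) under such limits.
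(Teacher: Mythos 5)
Your primary, purely local argument has a genuine gap at exactly the point you flag as the ``main obstacle'': when the bending lamination of $\D CH(X)$ carries a non-atomic transverse measure, the boundary is indeed $C^1$, but $C^1$ is far from totally geodesic, and a point $p$ in the support of the bending measure does \emph{not} have a neighborhood in $\overline U$ isometric to a half-ball (if it did, all support planes near $p$ would coincide and there would be no bending near $p$ at all). So ``model (ii) applies there'' is false, and the trichotomy of local models breaks down: in the non-atomic case $\overline U$ is not piecewise hyperbolic near $p$, and the link criterion you invoke is only valid for piecewise constant-curvature complexes, not for spaces that merely have CAT(1) spaces of directions (a positively curved Riemannian manifold has round-sphere links everywhere). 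Your fallback --- prove the statement for finite $X$, where $CH(X)$ is an ideal polyhedron and the bending is concentrated on finitely many geodesic edges with exterior wedge angle $\ge\pi$, then approximate a general $X$ by an exhausting sequence of finite subsets --- is not merely ``technically cleaner''; it is the only version of your argument that works, and it is precisely the route the paper takes. Note, however, that even there ``stability of local CAT(-1) under such limits'' needs care: a pointed Hausdorff limit of locally CAT(-1) spaces need not be locally CAT(-1) without further input. The paper first checks that small balls about each point are simply connected uniformly in $i$, passes to universal covers (which are globally CAT(-1) by Cartan--Hadamard), and applies the limit theorem of Bridson--Haefliger to those.

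Two smaller points. First, your verification that the lune $B_{\theta_U}$ is CAT(1) cites simple connectivity and the local-to-global theorem, but for upper curvature bound $+1$ the relevant hypothesis is the absence of closed geodesics of length less than $2\pi$, not simple connectivity; the claim is true for $\pi\le\theta_U\le 2\pi$ (the only closed geodesic is the boundary, of length $2\pi$), but the justification as written is off. The paper sidesteps this entirely in the polyhedral case by doubling $\overline U$ along its boundary to obtain a hyperbolic cone--manifold with cone angles $2(2\pi-\theta_C)>2\pi$, quoting the standard Gromov--Thurston fact that such cone--manifolds are locally CAT(-1), and then observing that $\overline U$ is totally convex in its double (which also delivers the total geodesicity of $\D\overline U$ for free). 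Second, you omit the degenerate case in which $X$ lies on a round circle, so that $CH(X)$ is two--dimensional; there the completion differs from the closure, $\D\overline U$ is the double of $CH(X)$, and the paper treats this case separately (the doubled space is a cone--manifold with cone angle $4\pi$). Your framework could absorb it, but it needs to be said.
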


Before launching the proof of Proposition \ref{convex-cat} we would
like to remark that this is an almost exclusively $3$--dimensional
phenomenon.  For instance, it was pointed out to us by Larry Guth that
if $X$ is any finite set in $\D_\infty\BH^4$ then the complement of
$CH(X)$ is not aspherical and hence cannot be a locally CAT(-1)
manifold.  It would be interesting to determine for which subsets of
$\D_\infty\BH^n$, $n\ge 4$, Proposition \ref{convex-cat} remains
valid.

\begin{proof}
The assumption that $X$ has at least three points implies that $CH(X)$
is either a totally geodesic surface (if $X$ is contained in a round
circle in $\D_\infty\BH^3$) or a convex set with non-empty interior.

Assume that we are in the first case, or equivalently that $CH(X)
\subset \BH^2$.  If $CH(X) = \BH^2$ then each component of $\BH^3
\setminus CH(X)$ is an open halfspace, its closure is a closed
halfspace, and we have nothing to prove.  If $CH(X)$ is a proper
subset of $\BH^2$ then $U = \BH^3\setminus CH(X)$ is connected and
its metric completion $\overline U$ is homeomorphic to the complement
of a regular neighborhood of $CH(X)$ in $\BH^3$.  In particular,
$\overline U \setminus U$ is the double of $CH(X)$.  In fact, the map
$\overline U \to \BH^3$ induces the ``folding the double'' map
$\overline U \setminus U \to CH(X)$.  The double $D\overline U$ of
$\overline U$ is a hyperbolic cone--manifold with empty boundary and
all cone angles equal to $4\pi$.
This implies local uniqueness of geodesics, 
and hence $D\overline U$ is CAT(-1)
(compare with \cite{Gromov-Thurston}).  
The local uniqueness of
geodesics in $D\overline U$ implies that $\overline U$ is totally
convex and $\D\overline U$ totally geodesic
under the path metric on $\overline U$ inherited from the hyperbolic metric.
Since totally convex subsets of locally CAT(-1) spaces are locally
CAT(-1), the claim follows in this case.

In some sense, the case that $CH(X)$ has nonempty interior
is less confusing.  In this case the metric completion $\overline U$
of a component $U$ of $\BH^3 \setminus CH(X)$ is equal to its closure
in $\BH^3$.  Moreover, a theorem of Thurston (see for example
\cite{epstein-marden}) asserts that $\D\overline U = \overline U
\setminus U = \D CH(X)$ is, with respect to its intrinsic distance, a
complete hyperbolic surface.

Now choose a nested collection of finite subsets $X_i$ of $X$ with
dense union.  In other words,
\begin{equation}\label{eq:finite-approx}
X_1 \subset X_2 \subset X_3 \subset \dots \quad \hbox{and} \ \ 
X=\overline{\cup_{i=1}^\infty X_i}
\end{equation}
We may assume without loss of generality that none of the sets $X_i$
is contained in a round circle, since $X$ is not.
For each $i$, the set $U_i = \BH^3 \setminus CH(X_i)$ is connected
and contains $\BH^3 \setminus CH(X)$.  When $i$ tends to $\infty$,
the closures $\overline U_i$ of $U_i$ converge in the pointed
Hausdorff topology to the closure $\overline{\BH^3\setminus CH(X)}$
of $\BH^3\setminus CH(X)$.  Moreover, for every $p\in\BH^3\setminus
CH(X)$ there is $\epsilon>0$ such that for all $i$
sufficiently large, $B_p(\epsilon,\BH^3)\cap\overline U_i$ and
$B_p(\epsilon,\BH^3)\cap (\BH^3\setminus CH(X))$ are simply
connected. In other words, the sequence of universal covers of
$\overline U_i$ converge in the pointed Hausdorff topology to the
universal cover of $\overline{\BH^3\setminus CH(X)}$.  It follows now
from \cite[II, Theorem 3.9]{Bridson-H} that it suffices to show that
each of the $\overline U_i$ is locally CAT(-1).

In other words, in order to conclude the proof of Proposition
\ref{convex-cat} it remains to prove it for finite sets $X \subset
\D_\infty\BH^3$ which are not contained in a round circle.  Under this
assumption, $CH(X)$ is a convex ideal polyhedron with non-empty
interior.  In particular $CH(X)$ has finitely many totally geodesic
faces, finitely many geodesic edges and no vertices (other than the
ideal vertices at infinity).  Convexity of $CH(X)$ implies that every
interior dihedral angle of the polyheldron
is less than $\pi$. The closure $\overline U$ of the complement $U$ of
$CH(X)$ in $\BH^3$ is just the complement in $\BH^3$ of the interior
of the polyhedron $CH(X)$.  Doubling $\overline U$ we obtain a
hyperbolic cone--manifold $D\overline U$ with empty boundary and cone
angles greater than $2\pi$.  As above, the doubled $D\overline U$ is
CAT(-1) under the induced path metric
and, again as above, we deduce that $\overline U$ itself is CAT(-1)
with totally geodesic boundary under the induced path metric.
This concludes the proof of Proposition \ref{convex-cat}.
\end{proof}

We conclude with the following slightly more general version of
Proposition \ref{convex-cat}:

\begin{kor}\label{cor:convex-cat}
Let $\{X_i\}$ be a sequence of closed subsets of $\D_\infty\BH^3$ and
assume that for every $x \in \BH^3$ there is some $\epsilon_x>0$ such
that the ball $B_{\BH^3}(x,\epsilon_x)$ intersects at most one of the
convex hulls $CH(X_i)$.  Let $U$ be a connected component of $\BH^3
\setminus \cup_i CH(X_i)$ and $\widetilde{U}$ its universal cover.
Then $\overline{\widetilde{U}}$, the completion of $\widetilde{U}$
with respect to the lifted path metric, is a CAT(-1) space.
\end{kor}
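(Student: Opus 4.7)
The plan is to reduce to Proposition \ref{convex-cat} via the local finiteness hypothesis and then apply the Cartan--Hadamard theorem for locally CAT(-1) spaces (see \cite[II.4.1]{Bridson-H}) to promote local CAT(-1)-ness to a global statement on the simply connected space $\overline{\widetilde U}$.

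First, I would verify that $\overline U$ itself is locally CAT(-1). At any $p\in U$ a small enough ball is isometric to a hyperbolic ball, so there is nothing to check. At a boundary point $p \in \overline U\setminus U$, choose a Cauchy sequence $(p_n)$ in $U$ representing $p$. Since the length metric on $U$ dominates the hyperbolic metric, $(p_n)$ is also Cauchy in $\BH^3$ and converges there to some $q$ lying in one of the convex hulls $CH(X_j)$. The separation hypothesis provides $\epsilon_q>0$ such that $B_{\BH^3}(q,\epsilon_q)$ meets only $CH(X_j)$ among the convex hulls in the collection. Consequently, a sufficiently small neighborhood of $p$ in $\overline U$ is isometric to the corresponding neighborhood in the metric completion of the unique component of $\BH^3 \setminus CH(X_j)$ containing the tail of $(p_n)$. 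Proposition \ref{convex-cat} guarantees that this latter completion is locally CAT(-1), so $p$ admits a CAT(-1) neighborhood in $\overline U$.

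Second, I would lift this local structure to the universal cover. The covering map $\widetilde U\to U$ is a local isometry of length metric spaces, and every Cauchy sequence in $\widetilde U$ projects to a Cauchy sequence in $U$, so it extends canonically to a locally isometric map $\overline{\widetilde U}\to \overline U$. The CAT(-1) neighborhoods in $\overline U$ produced above are simply connected (the single-convex-hull model near a boundary point is a half-ball, a dihedral wedge along an edge, or an ideal corner), so they lift homeomorphically and isometrically to $\overline{\widetilde U}$. Hence $\overline{\widetilde U}$ is locally CAT(-1), and it is complete by construction. To apply Cartan--Hadamard it remains to check simple connectedness: since in each local model the boundary is totally geodesic of codimension one, any continuous loop in $\overline{\widetilde U}$ can be perturbed off the boundary into $\widetilde U$, and the simple connectivity of $\widetilde U$ then fills it with a disk.

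The main subtlety I foresee is the precise identification of small neighborhoods of boundary points of $\overline{\widetilde U}$ with the single-convex-hull models of Proposition \ref{convex-cat}. The metric completion can in principle identify or separate points in ways that are invisible from the ambient hyperbolic structure, and one must use the local finiteness hypothesis carefully to ensure that only one $CH(X_i)$ is relevant in any sufficiently small intrinsic ball. Once this identification is in place, all the work is done by Proposition \ref{convex-cat} and the Cartan--Hadamard theorem, which together yield that $\overline{\widetilde U}$ is globally CAT(-1).
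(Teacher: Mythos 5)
Your overall strategy -- local models coming from a single convex hull, Cartan--Hadamard, and pushing loops off the boundary to get simple connectivity of $\overline{\widetilde U}$ -- is exactly the paper's strategy. But there is a genuine gap: you only ever invoke Proposition \ref{convex-cat}, which requires $X$ to contain at least three points, while the corollary allows arbitrary closed subsets $X_i$. The problematic case is $|X_i|=2$, where $CH(X_i)$ is a single geodesic line. There your argument breaks in two places. First, the local model near such a point is a hyperbolic ball minus a geodesic arc; its completion in the path metric is just the ball again (codimension two), which is CAT(-1) but is \emph{not} one of the models you list (half-ball, dihedral wedge, ideal corner) and, more importantly, small punctured neighborhoods of such points in $U$ are not simply connected, so your step ``the CAT(-1) neighborhoods in $\overline U$ are simply connected, hence lift homeomorphically and isometrically to $\overline{\widetilde U}$'' fails. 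Second, the correct local model in $\overline{\widetilde U}$ over such a point is the completion of the \emph{universal cover} of the complement of a geodesic, i.e.\ an infinite-angle cone over the line; that this is CAT(-1) is not covered by Proposition \ref{convex-cat} and requires a separate input -- the paper cites Soma \cite{Soma-shrink} for precisely this case.

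Relatedly, your two-stage reduction (first show $\overline U$ is locally CAT(-1), then lift) is slightly off even in spirit: the paper works directly with $\overline{\widetilde U}$, asserting that each of its points has a neighborhood isometric to the completion of the universal cover of the complement of a single $CH(X_i)$, and then splits into the two-point case (Soma) and the at-least-three-point case (Proposition \ref{convex-cat}). If you add the two-point case with the appropriate reference and phrase the local identification at the level of universal covers rather than of $\overline U$, your argument matches the intended proof.
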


\begin{proof}
The assumption that each point in $x \in \BH^3$ is the center of some
ball which only intersects one of the convex hulls $CH(X_i)$ implies
that each point in $x \in \overline{\widetilde{U}}$ has a small
neighborhood isometric to the completion of the universal cover of the
complement of $CH(X_i)$.  If $X_i$ consists of only two points, then
it follows from Soma \cite{Soma-shrink} that $x$ has a small CAT(-1)
neighborhood in $\overline{\widetilde{U}}$.  If $X_i$ has at least
three points then we obtain the same consequence from Proposition
\ref{convex-cat}.  Using this local description of the completion of
the universal cover, it is easy to see that every curve in
$\overline{\widetilde{U}}$ can be homotoped to the a curve in
$\widetilde U$ and hence that $\overline{\widetilde{U}}$ is simply
connected.  In other words, $\overline{\widetilde U}$ is a simply
connected locally CAT(-1) space and hence is CAT(-1) by the
Hadamard--Cartan theorem.
\end{proof}

\subsection{Proof of Theorem \ref{yair}}
We prove now Theorem \ref{yair}. 

\begin{named}{Theorem \ref{yair}}
Let $M$ be a hyperbolic 3--manifold.  If the manifold $M$ has at least
two convex cocompact ends, then $M$ is not the geometric limit of any
sequence of hyperbolic knot complements in $\BS^3$.
\end{named}

\begin{proof}
Let $M$ be as in the statement of the theorem and assume that there is
a sequence of hyperbolic knot complements $\{M_{K_i}\}$ converging
geometrically to $M$.  Let $\CN(CC(M))$ be a regular neighborhood of
$CC(M)$ with smooth strictly convex boundary and let $\Sigma_1$ and
$\Sigma_2$ be two compact components of $\D\CN(CC(M))$.  Fix now a
compact, connected 3--dimensional submanifold $W$ of $M$ whose boundary
contains both $\Sigma_1$ and $\Sigma_2$.  For some $d$ to be
determined below, let $W_d=\{x\in M\vert d_M(x,W)\le d\}$.  By
geometric convergence, for large $i$ we have better and better almost
isometric embeddings
$$f_i\co W_d \to M_{K_i}.$$ We may assume, passing to a subsequence,
that for all $i$ the surfaces $f_i(\Sigma_1)$ and $f_i(\Sigma_2)$ are
locally convex.

The two surfaces $f_i(\Sigma_1)$ and $f_i(\Sigma_2)$ are closed and
disjoint.  Since the complement of a knot in $\BS^3$ does not contain
non-separating closed surfaces, we obtain that $f_i(\Sigma_1)$ and
$f_i(\Sigma_2)$ separate the knot complement $M_{K_i}$ into three
pieces.  Let $V_i^0$ be the component containing $f_i(W)$.  If
$M_{K_i} \setminus V_i^0$ has an unbounded component, let $V_i^1$ be
this component; otherwise choose $V_i^1$ to be either remaining
component.  Set $V_i = V_i^0 \cup V_i^1$ and observe that it is
convex, and that its boundary is one of the two surfaces
$f_i(\Sigma_1), f_i(\Sigma_2)$.  Up to relabeling and passing to a
subsequence, we may assume $\D V_i = f_i(\Sigma_1)$ for all $i$.

It follows from the convexity of $V_i$ that its fundamental group
$\pi_1(V_i)$ injects into $\pi_1(M_{K_i})$.  Let $M_{V_i}$ be the
associated cover of $M_{K_i}$ and lift the inclusion $V_i
\hookrightarrow M_{K_i}$ to the inclusion $V_i \hookrightarrow
M_{V_i}$.  The convexity of $V_i$ implies that the convex core
$CC(M_{V_i})$ of $M_{V_i}$ is contained in $V_i$.  In particular, the
restriction of the covering $M_{V_i} \to M_{K_i}$ to $CC(M_{V_i})$ is
injective.  Before moving on we observe:
\begin{enumerate}
\item The submanifolds $CC(M_{V_i})$ and $V_i$ of $M_{K_i}$ are
isotopic.
\item The boundary of $CC(M_{V_i})$ is a closed connected surface
$S_i$.  The surfaces $S_i$ are uniformly bi-Lipschitz equivalent to
$\Sigma_1$.  In particular, there is some uniform $\delta$ with
$\inj(S_i) \ge \delta$ for all $i$.
\item The surface $\D CC(M_{V_i})$ has, for large $i$, a collar of at
least width $d$ in $M_{K_i}\setminus CC(M_{V_i})$,
since recall $f_i$ is an almost isometric embedding of $W_d$,
containing $W$. In particular, any essential arc $(\kappa, \D\kappa) \subset(M_{K_i}
\setminus CC(M_{V_i}), \D CC(M_{V_i}))$ has, for all sufficiently
large $i$ length at least $2d$.
\end{enumerate}

Let now $U_i' = M_{K_i} \setminus V_i$ and $U_i = M_{K_i} \setminus
CC(M_{V_i})$ be the complements of $V_i$ and $CC(M_{V_i})$ in
$M_{K_i}$ respectively.  It follows from (1) that $U_i$ and $U_i'$ are
isotopic and from the construction that $U_i' \subset U_i$.  Let
$\overline U_i = U_i \cup S_i$ be the closure of $U_i$ in $M_{K_i}$.
Now, $\overline U_i$ is the completion of the complement of a convex
set $CC(M_{V_i})$ in the hyperbolic manifold $M_{K_i}$.  Lift to the
universal $\BH^3$ of $M_{K_i}$.  Lifts of $CC(M_{V_i})$ are convex
hulls of sets $X_j$ in $\D \BH^3$.  By Corollary \ref{cor:convex-cat},
the completion of the universal cover of a component of $\BH^3
\setminus \cup_j CH(X_j)$ is a CAT(-1) space with geodesic boundary.
This is the universal cover of $\overline U_i$.  Hence with respect to
the path metric induced from the interior metric, $\overline U_i$ is a
locally CAT(-1) manifold with totally geodesic boundary.

It is a well--known fact that a compact 3--manifold which admits a
hyperbolic metric with respect to which the boundary is totally
geodesic is irreducible, atoroidal and has incompressible and
acylindrical boundary.  The argument applies verbatim to locally
CAT(-1) metrics with again totally geodesic boundary.  We deduce hence
that $\overline U_i$ is irreducible, atoroidal and has incompressible
and acylindrical boundary.

Via the embedding $M_{K_i} \hookrightarrow \BS^3$, we consider
$\overline U_i$ as a submanifold of the 3--sphere and let $H_i = \BS^3
\setminus \overline U_i$ be its complement.  Since a component of $\D
H_i$ has at least genus $2$ we deduce that $H_i$ cannot be simply
connected.  In particular, the homomorphism
$$\pi_1(H_i) \to \pi_1(\overline U_i \cup H_i) = \pi_1(\BS^3) = 1$$
cannot be injective. We deduce as in \cite[Section
2]{Lackenby-attaching} that the acylindrical manifold $\overline U_i$
contains an immersed incompressible, boundary incompressible planar
surface
$$(X_i, \D X_i) \subset (\overline U_i, \D\overline U_i)$$ with
negative Euler characteristic $\chi(X_i) = 2-k_i$; here $k_i$ is the
number of boundary components of $X_i$.

We proceed now as for example in \cite{Soma-shrink} to obtain a
CAT(-1) metric with geodesic boundary on $X_i$.  Start with a
triangulation of $X_i$ with a single vertex at each boundary component
and no vertices in the interior.  Homotope the map $X_i \to \overline
U_i$ so that the boundary curves go to geodesics in $\D\overline U_i$.
Then, keeping the boundary fixed, homotope the remaining edges of the
1--skeleton of $X_i$ to geodesic arcs.  Finally homotope each of faces
of the triangulation to a ruled surface.

Pulling back the CAT(-1) metric of $\overline U_i$ we obtain a CAT(-1)
metric on $X_i$ with geodesic boundary.  The Gau\ss --Bonnet theorem
implies then that
\begin{equation}\label{eq:GB}
\vol(X_i) \le 2\pi(k_i-2) < 2\pi k_i
\end{equation}
Assume that $(\kappa,\D\kappa)\subset(X_i,\D X_i)$ is an essential
arc.  Since the surface $X_i$ is boundary incompressible we deduce
from (3) that the image of $\kappa$ has at least length $2d$.  On the
other hand, the map $X_i \to \overline U_i$ is 1--Lipschitz.  Hence
each of the boundary components of $X_i$ has an embedded collar of
width $d$ and all these collars are disjoint. At the same time, each
one of the boundary components is an essential curve in $S_i =
\D\overline U_i'$ and hence has at least length $\delta$ by (2). We
deduce that each one of the $k$ collars has at least volume $d\delta$
and that all these collars are disjoint. Thus
$$\vol(X_i)\ge k_id\delta.$$ 
In particular, if $d$ is sufficiently
large we obtain a contradiction to the area bound \eqref{eq:GB}.  This
concludes the proof of Theorem \ref{yair}.
\end{proof}

At this point we would like to add a few observations on the geometry
of complements of convex submanifolds in hyperbolic 3--manifolds.  For
example, remark that during the proof of Theorem \ref{yair} we have
essentially obtained the following result:

\begin{prop}\label{convex-embedding}
Let $M = \BH^3/\Gamma$ be a hyperbolic 3--manifold and $V \subset M$ a
3--dimensional submanifold with locally convex, compact boundary and
non-abelian fundamental group $\pi_1(V)$.  Consider $U = M \setminus
V$ and let $\overline U = U \cup \D V$ be its metric completion.  The
embedding $U \hookrightarrow M$ is isotopic (but perhaps not ambient
isotopic) to a second embedding $\phi\co U \hookrightarrow M$ with the
following properties:
\begin{enumerate}
\item $U \subset \phi(U)$ and moreover, if $\D V$ is smooth and none
of its components are totally geodesic, then $\phi(U) \setminus U$ is
homeomorphic to $\D\overline U \times \BR$.
\item $\phi$ extends continuously to a locally injective $\bar\phi\co
\overline U \to M$.  Moreover, unless some component of $V$ is a
regular neighborhood of a non-separating totally geodesic surface, the
map $\bar\phi$ is injective when restricted each connected component
of $\overline U$.
\item If we endow $\overline U$ with the unique interior distance such that the
map $\bar\phi$ preserves the lengths of curves, then $\overline U$ is
a CAT(-1) space with totally geodesic boundary.\qed
\end{enumerate}
\end{prop}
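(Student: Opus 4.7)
The plan is to construct $\phi$ by shrinking $V$ to a submanifold $V' \subset V$ whose boundary consists of pleated surfaces coming from the convex cores of the covers of $M$ associated to each component of $V$, then realize $\phi$ as the restriction to $U$ of an ambient isotopy that pushes $V$ onto $V'$. The CAT(-1) property of the resulting completion then follows directly from Corollary \ref{cor:convex-cat}.

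I would work component by component. For each component $V_j$ of $V$, fix a lift of $\pi_1(V_j)$ to a subgroup $\Gamma_j < \Gamma$, yielding a cover $M_{V_j} = \BH^3/\Gamma_j$ of $M$. The local convexity of $\D V$ together with the compactness of $\D V_j$ implies that $V_j$ lifts isometrically to a convex submanifold of $M_{V_j}$, which I continue to denote $V_j$. Since $\pi_1(V_j)$ is non-abelian, the limit set $\Lambda_j \subset \D_\infty \BH^3$ of $\Gamma_j$ contains at least three points, and its convex hull $CH(\Lambda_j) \subset \BH^3$ is contained in every lift of $V_j$. Consequently the convex core $C_j := CH(\Lambda_j)/\Gamma_j$ is a closed convex subset of $V_j$ inside $M_{V_j}$. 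Let $V'_j \subset M$ denote the image of $C_j$ under the covering projection $M_{V_j} \to M$, and set $V' = \bigcup_j V'_j \subset V$.

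Next, I would define $\phi$ via an ambient isotopy $\Psi \co M \to M$ supported in a neighborhood of $V$ that shrinks each component $V_j$ onto $V'_j$ by pushing inward along normal geodesics, and set $\phi = \Psi|_U$. This yields $\phi(U) = M \setminus \Psi(V) \supset U$; when $\D V$ is smooth and has no totally geodesic components, each $V'_j$ is strictly smaller than $V_j$ with $\D V'_j$ a pleated surface, so $V \setminus V'$ is the expected collar and one obtains the identification $\phi(U) \setminus U \cong \D\overline U \times \BR$ of (1). The map $\phi$ extends to $\bar\phi \co \overline U \to M$ by sending each boundary point of $\overline U$ lying above $p \in \D V$ to the point $\Psi(p) \in \D V'$. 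Local injectivity of $\bar\phi$ follows because, near each point of $\D V'$, the image is a single totally geodesic face of $\D CH(\Lambda_j)$ embedded in $M$. Injectivity on a connected component of $\overline U$ may fail only when two distinct boundary faces of $CH(\Lambda_j)$ are identified under $\Gamma$; the classical Kleinian-group fact that such an identification forces $CH(\Lambda_j)$ to be symmetric across a totally geodesic plane shows that this can happen only in the exceptional non-separating totally geodesic case of (2).

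Finally, for (3), I endow $\overline U$ with the pullback interior metric under $\bar\phi$, so that $\bar\phi$ is a local isometry. The universal cover of a connected component of $\overline U$ in this metric is then isometric to the metric completion of a connected component of $\BH^3 \setminus \bigcup \gamma \cdot CH(\Lambda_j)$, the union ranging over all $\Gamma$-translates of the convex hulls $CH(\Lambda_j)$ for the various components of $V$. Discreteness of $\Gamma$ ensures the local finiteness required by Corollary \ref{cor:convex-cat}, which then yields that this completion is a CAT(-1) space with totally geodesic boundary; passing to the quotient, $\overline U$ inherits a locally CAT(-1) path metric with totally geodesic boundary, proving (3). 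The main obstacle will be the precise injectivity analysis of (2): showing that non-injectivity on a component of $\overline U$ implies the existence of a non-separating totally geodesic boundary face requires tracking which elements of $\Gamma$ can identify two support planes of $CH(\Lambda_j)$, and correctly handling the degenerate case where $CH(\Lambda_j)$ itself is two-dimensional.
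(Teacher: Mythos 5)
Your proposal follows essentially the same route as the paper: Proposition \ref{convex-embedding} is stated there with a \emph{qed} precisely because the argument is the one already carried out inside the proof of Theorem \ref{yair} --- pass to the cover $M_{V_j}$ determined by $\pi_1(V_j)$, use convexity to see that $CC(M_{V_j})\subset V_j$ so that the convex core projects injectively into $M$, replace $V$ by these projected convex cores, and invoke Corollary \ref{cor:convex-cat} for the CAT(-1) structure. The only point worth flagging is that the failure of injectivity in (2) does not arise from elements of $\Gamma$ identifying distinct faces of $CH(\Lambda_j)$ (this is impossible, since $CH(\Lambda_j)$ lies in the embedded lift of $V_j$ and the $\Gamma$-translates of that lift are pairwise disjoint), but solely from the local doubling of the boundary when $CH(\Lambda_j)$ is two--dimensional and the resulting totally geodesic surface is non-separating --- the degenerate case you already single out at the end.
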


As a consequence of Proposition \ref{convex-embedding} and Soma
\cite{Soma-shrink} we have:

\begin{kor}\label{convex-acylindrical}
Let $M = \BH^3/\Gamma$ be a hyperbolic 3--manifold and $V \subset M$ a
3--dimensional submanifold with locally convex, compact boundary.
Assume that no component of $V$ is simply connected and let $\overline
U$ be the metric completion of $U = M \setminus V$.  Then $\overline
U$ is irreducible, atoroidal and has incompressible and acylindrical
boundary.\qed
\end{kor}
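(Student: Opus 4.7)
The plan is to show that $\overline U$ carries a locally CAT$(-1)$ path metric with totally geodesic boundary, and then invoke the standard fact that a compact 3--manifold admitting such a metric is irreducible, atoroidal, and has incompressible and acylindrical boundary. To build the CAT$(-1)$ structure I work one component $V_0$ of $V$ at a time, using the non--simply--connectedness hypothesis to rule out trivial pieces. For any component $V_0$ with non--abelian fundamental group, Proposition \ref{convex-embedding} applies directly and yields a locally CAT$(-1)$ path metric on the completion of $M \setminus V_0$ with totally geodesic boundary. A component with $\pi_1(V_0)$ abelian and nontrivial is, by discreteness and torsion--freeness, either a convex tube about a closed geodesic (so $\pi_1(V_0) \cong \BZ$) or a rank two cusp neighborhood (so $\pi_1(V_0) \cong \BZ^2$); in both cases its lifts to $\BH^3$ are convex sets whose ideal boundaries consist of two points or a single point. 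For such lifts, Soma \cite{Soma-shrink} shows that the completion of the universal cover of the complement is CAT$(-1)$---the very ingredient used in the passage from Proposition \ref{convex-cat} to Corollary \ref{cor:convex-cat}. Assembling these local descriptions gives the desired global locally CAT$(-1)$ structure on $\overline U$ with totally geodesic boundary.

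Given this structure, the topological conclusions follow exactly as at the end of the proof of Theorem \ref{yair}. Irreducibility is immediate from contractibility of the CAT$(-1)$ universal cover $\widetilde{\overline U}$: any embedded 2--sphere in $\overline U$ lifts there and hence bounds a ball. For atoroidality, incompressibility of $\partial\overline U$, and acylindricity, one homotopes a putative essential torus, compressing disc, or essential annulus in the relevant homotopy class to a ruled---hence CAT$(-1)$---surface, geodesic on its boundary when the boundary lies on $\partial\overline U$, and derives a Gau\ss--Bonnet contradiction from the strict negative curvature of the interior of $\overline U$. This is the same schema carried out in detail for the planar surface $X_i$ in the proof of Theorem \ref{yair}.

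The main technical point is the treatment of the abelian components of $V$, since Proposition \ref{convex-embedding} is stated only under the hypothesis that $\pi_1(V)$ is non--abelian; this is precisely why the statement of the corollary cites \cite{Soma-shrink} as an essential input. Once the global CAT$(-1)$ structure on $\overline U$ with totally geodesic boundary has been assembled, the passage to the 3--manifold topological conclusions is entirely classical and needs no further input beyond what already appears in the proof of Theorem \ref{yair}.
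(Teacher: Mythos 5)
Your proposal is correct and follows essentially the same route as the paper, which states the corollary as a direct consequence of Proposition \ref{convex-embedding} (covering the components of $V$ with non-abelian fundamental group) together with Soma \cite{Soma-shrink} (covering the remaining non-simply-connected, abelian components), and then appeals to the standard fact, already invoked in the proof of Theorem \ref{yair}, that a locally CAT($-1$) manifold with totally geodesic boundary is irreducible, atoroidal, and has incompressible acylindrical boundary. Your identification of the abelian components as the point requiring the Soma input is exactly the reason the paper cites that reference.
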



\section{Various}\label{sec:various}
In this section we discuss some of the examples mentioned in the
introduction, we make some more or less interesting remarks and we ask
a few questions.
\medskip

We start constructing hyperbolic knots whose complements have very big
injectivity radius at some point.  In order to do so, it suffices to
remark that $\BH^3$ is a complete hyperbolic 3--manifold with trivial,
and hence finitely generated, fundamental group which is homeomorphic
to a ball and hence has a single end and embeds into $\BS^3$.  In
other words, it satisfies the conditions of Theorem \ref{thm:main} and
hence $\BH^3$ is a geometric limit of some sequence of hyperbolic knot
complements $\{M_{K_i}\}$.  Recall that this means that there are $p
\in \BH^3$ and $p_i \in M_{K_i}$ such that for all $r$ and $\epsilon$
and all sufficiently large $i$ the ball $B_p(r) \subset \BH^3$ with
center $p$ and radius $r$ can be $(1+\epsilon)$ isometrically embedded
into $M_{K_i}$ by a map which maps $p$ to $p_i$.  This implies that
for all sufficiently large $i$, the set of points in $M_{K_i}$ which
is at distance at most say $R/2$ of $p_i$ is simply connected.  In
other words, for all large $i$ we have $\inj(p_i, M_{K_i}) \ge R/2$.
Since $R$ was arbitrary we obtain:

\begin{named}{Corollary \ref{large-inj}}
For every $R>0$ there is a hyperbolic knot complement $M_K$ and $x \in
M_K$ with injectivity radius $\inj(x, M_K) > R$. \qed
\end{named}

The proof of Corollary \ref{large-inj} is somehow disappointing
because the involved knots are produced in a very very indirect
way.  We ask:

\begin{quest}
Give an explicit construction of knots as in Corollary
\ref{large-inj}.
\end{quest}

The same strategy used to prove Corollary \ref{large-inj} can be used
to show that there are hyperbolic knots whose complement has
arbitrarily large Heegaard genus, arbitrarily large volume,
arbitrarily many arbitrarily small eigenvalues of the Laplacian,
arbitrarily many arbitrarily short geodesics, contains surfaces with
arbitrarily small principal curvatures... It should be said that knots
with most of these properties were either known to exist (see for
instance \cite{Leininger}) or no one bothered to try to construct them
before.
\medskip

We show now how to prove that some other hyperbolic 3--manifolds not
covered by Theorem \ref{thm:main} are also geometric limits of knot
complements.

It is a by now almost folklore fact that every hyperbolic 3--manifold
$M$ homeomorphic to the trivial interval bundle $\Sigma_g \times \BR$
over a closed surface of genus $g$ and which has at least one
degenerate end is the geometric limit of a sequence of hyperbolic
3--manifolds $\{M_i\}$ where each of $M_i$ is homeomorphic to the
interior of a handlebody of genus $g$ (compare for instance with
\cite[Section 3]{biringer-souto}).  Each one of the manifolds $M_i$
satisfies the assumption of Theorem \ref{thm:main} and hence is a
geometric limit of knot complements.  It follows now from Lemma
\ref{lemma:silly}:

\begin{kor}\label{kor1}
Every hyperbolic 3--manifold $M$ homeomorphic to $\Sigma_g \times \BR$
and which has at least one degenerate end is the geometric limit of a
sequence of hyperbolic knot complements.\qed
\end{kor}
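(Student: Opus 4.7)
The plan is to combine Theorem \ref{thm:main} with an approximation of $M$ by handlebody manifolds, glued together by Lemma \ref{lemma:silly}. The two--step reduction will proceed as follows: first, show that $M$ is a geometric limit of hyperbolic handlebody manifolds $\{M_i\}$; second, apply Theorem \ref{thm:main} to each $M_i$ and extract a diagonal subsequence.

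For the first step, I would establish that any hyperbolic $M$ homeomorphic to $\Sigma_g \times \BR$ with at least one degenerate end is a geometric limit of hyperbolic manifolds $\{M_i\}$, each homeomorphic to the interior of a handlebody of genus $g$. A natural construction proceeds as follows: let $\lambda$ be the ending lamination of a degenerate end of $M$, fix a handlebody $H_g$ with $\D H_g = \Sigma_g$ chosen so that $\lambda$ lies in the Masur domain of $H_g$, and pick simple closed curves $\gamma_n \subset \Sigma_g$ converging projectively to a measured lamination supported on $\lambda$. By the Density Theorem for handlebodies together with the work of Kim--Lecuire--Ohshika, there exist hyperbolic structures $M_n$ on the interior of $H_g$ in which $\gamma_n$ is parabolic. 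Choosing basepoints so that one ``sees'' the full product region $\Sigma_g \times \BR$ in the geometric limit (rather than just the algebraic limit, which is a one-ended handlebody), an argument parallel to Proposition \ref{prop:limit-degenerate} --- using Proposition \ref{detect-ending} and the Ending Lamination Theorem --- identifies the geometric limit of $\{M_n\}$ isometrically with $M$.

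For the second step, each $M_i$ is homeomorphic to the interior of a genus-$g$ handlebody, which has finitely generated (free) fundamental group, a single topological end, and a standard embedding in $\BS^3$ as the complement of another handlebody. Hence Theorem \ref{thm:main} yields, for each $i$, a sequence of hyperbolic knot complements converging geometrically to $M_i$. Lemma \ref{lemma:silly} then packages these two approximation layers into a single sequence of hyperbolic knot complements converging geometrically to $M$.

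The main obstacle is the first step, since the topological type genuinely changes in the geometric limit: the approximating manifolds have free fundamental group while $M$ has surface group. One must therefore track both the algebraic limit, a one-ended handlebody manifold with a degenerate end, and the geometric limit, the full product manifold $\Sigma_g \times \BR$, with the Canary--Thurston covering theorem providing the link between them. This construction is by now standard, and is treated in more detail in \cite{biringer-souto}.
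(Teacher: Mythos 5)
Your proposal is correct and follows essentially the same route as the paper: approximate $M$ by hyperbolic manifolds homeomorphic to the interior of a genus-$g$ handlebody (the folklore fact, compare \cite[Section 3]{biringer-souto}), apply Theorem \ref{thm:main} to each of these, and conclude with Lemma \ref{lemma:silly}. The paper simply cites the first step as known rather than sketching the pinching construction, but the content is the same.
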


More generally, if $M$ is homeomorphic to the interior of a
compression body with exterior boundary of genus $g$ and such that
each of the interior ends is degenerate, then $M$ the geometric limit
of a sequence $\{M_i\}$ where each of the $M_i$ is homeomorphic to a
genus $g$ handlebody (compare again \cite[Section 3]{biringer-souto}).
Paying the price of having a large exterior boundary, we have
compression bodies with as many interior boundary components as we
wish.  In particular we deduce:

\begin{kor}\label{kor2}
For every $n$ and $g$ there is a hyperbolic 3--manifold $M$ with at
least $n$ ends which have neighborhoods homeomorphic to $\Sigma_g
\times \BR_+$ and which arise as geometric limits of knot
complements.\qed
\end{kor}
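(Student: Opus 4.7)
The plan is to follow exactly the blueprint suggested in the paragraph preceding the statement, and reduce to Theorem \ref{thm:main} via Lemma \ref{lemma:silly}. Fix $n$ and $g$. First I would choose a compression body $\overline{C}$ whose exterior boundary $\D_e\overline{C}$ has some sufficiently large genus $h \ge n\cdot g$ and whose interior boundary $\D_i\overline{C}$ consists of exactly $n$ components, each a closed surface of genus $g$; such a $\overline{C}$ is obtained from a handlebody $H_h$ of genus $h$ by attaching $n$ disjoint regular neighborhoods of $\Sigma_g \times [0,1]$ along appropriately chosen disks in $\D H_h$, or dually by compressing the interior boundary of a product $\Sigma_{h} \times [0,1]$ along a suitable collection of disjoint disks.

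Second, I would equip the interior of $\overline{C}$ with a complete hyperbolic structure $M$ having empty parabolic locus, such that each of the $n$ geometric ends associated to a component of $\D_i\overline{C}$ is degenerate. The existence of such a structure follows from Thurston's uniformization theorem together with the density/ending lamination machinery recalled in section \ref{sec:prelim}: take any convex cocompact hyperbolic structure on the interior of $\overline{C}$ and pinch each interior component of the conformal boundary to a degenerate end by approximating in $AH(\overline{C})$ with ending data whose laminations on the interior boundary components are filling and doubly incompressible (which is automatic for laminations in the Masur domain of each interior boundary component). By the ending lamination theorem this produces an $M$ as desired. In particular, $M$ has exactly $n+1$ geometric ends, of which the $n$ interior ones have neighborhoods homeomorphic to $\Sigma_g \times \BR_+$.

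Third, I would invoke the folklore fact (see \cite[Section 3]{biringer-souto} and the discussion in the paragraph before the corollary) that $M$ is then a geometric limit of a sequence $\{M_i\}$ of hyperbolic $3$--manifolds each homeomorphic to the interior of the handlebody $H_h$: one chooses appropriate sequences of simple closed curves on the exterior boundary of $\overline{C}$ whose lengths go to zero and which pinch the exterior end into the configuration of degenerate interior ends, exactly as in Proposition \ref{prop:limit-degenerate} applied in reverse. Each $M_i$ has finitely generated fundamental group, a single topological end, and is homeomorphic to the interior of the handlebody $H_h$, which embeds in $\BS^3$. Therefore Theorem \ref{thm:main} applies to each $M_i$, and each is a geometric limit of a sequence of hyperbolic knot complements in $\BS^3$. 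A final application of Lemma \ref{lemma:silly}, diagonalizing the two geometric convergences, shows that $M$ itself is a geometric limit of hyperbolic knot complements.

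The main obstacle I expect is not conceptual but verifying the folklore approximation by handlebodies in the precise form needed: one has to make sure that the pinching curves on the exterior boundary can be chosen compatibly with the $n$ prescribed degenerate ending laminations on the interior components, so that the resulting algebraic limits of handlebody groups are strong limits producing exactly $M$. This is handled by the same combination of Proposition \ref{detect-ending}, Proposition \ref{alg-strong}, and the ending lamination theorem that drove the proof of Proposition \ref{prop:limit-degenerate}, so no new machinery is required beyond what has already been developed.
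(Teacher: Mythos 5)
Your proposal follows essentially the same route as the paper: the paper also takes a compression body with exterior boundary of large genus and $n$ interior boundary components of genus $g$, puts a hyperbolic structure on its interior with all interior ends degenerate, invokes the same folklore approximation by manifolds homeomorphic to genus--$h$ handlebodies (citing \cite[Section 3]{biringer-souto}), and concludes via Theorem \ref{thm:main} and Lemma \ref{lemma:silly}. The extra care you take in justifying the existence of the hyperbolic structure and the handlebody approximation goes slightly beyond what the paper writes down, but the argument is the same.
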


The statement of Corollary \ref{kor1} and Corollary \ref{kor2} become
more interesting when compared with Theorem \ref{yair} since the
latter shows that for instance not every hyperbolic manifold
homeomorphic to $\Sigma_g \times \BR$ is a geometric limit of knot
complements.
 
\begin{quest}\label{q1}
Assume that $M$ has finitely generated fundamental group, embeds into
the sphere and has more than one end. What are the possible hyperbolic
metrics on $M$ which make $M$ a geometric limit of knot complements?
Are there restrictions on the possible ending laminations of the
degenerate ends?
\end{quest}

\begin{quest}\label{q2}
Is there an irreducible and atoroidal 
 3--manifold with finitely generated fundamental group
which embeds into $\BS^3$ and is not homeomorphic to any geometric
limit of knot complements?
\end{quest}

In relation with these two last questions, we would like to observe
that in the course of the proof of Theorem \ref{thm:main} we proved
something slightly stronger than what we state.  In fact, if $M$ is as
in the statement of the theorem and we fix an embedding $\overline M
\hookrightarrow \BS^3$ of the manifold compactification of $M$ into
the sphere, then we can find knots $K_i$ in the complement of
$\overline M$ in $\BS^3$ such that $M$ is a geometric limit of the
$M_{K_i}$.  In other words, if we identify $\overline M$ with a
standard compact core of $M$, then the initial embedding of $\overline
M$ into $\BS^3$ and the embedding obtained by composing the
homeomorphism $\overline M \hookrightarrow M$ with the almost
isometric embedding $M \hookrightarrow M_{K_i}$ provided by geometric
convergence, followed by the standard embedding $M_{K_i}
\hookrightarrow \BS^3$, are isotopic.

It seems that whenever $M$ has at least two ends the situation
dramatically changes.  In particular, question \ref{q1} and question
\ref{q2} may actually turn to be questions on the possible
re-embeddings of submanifolds of the sphere.

Also related to question \ref{q1} and question \ref{q2} but in a
little different spirit we ask:

\begin{quest}
Is there a geometric limit of knot complements $M$ with finitely
generated $\pi_1(M)$ which has two geometrically finite geometric ends
contained in two different topological ends?
\end{quest}

For the sake of playfulness, or what is almost the same, to answer a
question of Yair Minsky, we discuss which hyperbolic 3--manifolds of
the form $\BH^3/\Gamma$, with $\Gamma \subset \PSL_2\BR$ a
torsion--free Fuchsian group, arise as geometric limits of knot
complements.  If the surface $\BH^2/\Gamma$ is compact, then
$\BH^3/\Gamma$ has two convex cocompact ends and hence is not a limit
of knot complements by Theorem \ref{yair}.  On the other hand, if the
surface $\BH^2/\Gamma$ is not compact then let $S_1 \subset S_2
\subset \dots$ be a nested sequence of compact connected
$\pi_1$--injective subsurfaces of $\BH^2/\Gamma$ with $\BH^2/\Gamma =
\cup S_i$.  Let $\Gamma_1 \subset \Gamma_2 \subset \dots$ be the
associated sequence of subgroups $\Gamma_i = \pi_1(S_i)$ of $\Gamma$.
Then the hyperbolic manifold $\BH^3/\Gamma$ is the geometric limit of
the sequence $\{\BH^3/\Gamma_i\}$.  On the other hand, each of the
manifolds $\BH^3/\Gamma_i$ is homeomorphic to a handlebody and hence
is a geometric limit of knot complements by Theorem \ref{thm:main}.
Combining these two observations we obtain:

\begin{kor}\label{fuchsian}
Let $\Gamma \subset \PSL_2\BR$ be a torsion free, but possibly
infinitely generated, Fuchsian group.  Then the hyperbolic manifold
$\BH^3/\Gamma$ is a geometric limit of knot complements if and only if
the surface $\BH^2/\Gamma$ is open.\qed
\end{kor}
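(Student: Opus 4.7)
The plan is to verify both implications, drawing on Theorem \ref{thm:main}, Theorem \ref{yair}, and Lemma \ref{lemma:silly}, which together do the heavy lifting.

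For the converse direction, suppose $\BH^2/\Gamma$ is compact. Then $\Gamma$ is cocompact in $\PSL_2\BR$, and viewed in $\PSL_2\BC$ its limit set is the round equator $\D\BH^2 \subset \D_\infty\BH^3$. Hence $CC(\BH^3/\Gamma) = \BH^2/\Gamma$ is a closed, totally geodesic surface, and the complement $\BH^3/\Gamma \setminus CC(\BH^3/\Gamma)$ consists of two components, each homeomorphic to $(\BH^2/\Gamma) \times (0,\infty)$.  In particular $\BH^3/\Gamma$ has two convex cocompact ends, and Theorem \ref{yair} immediately rules it out as a geometric limit of knot complements.

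For the forward direction, assume $\BH^2/\Gamma$ is an open surface.  I would exhaust it by a nested sequence $S_1 \subset S_2 \subset \dots$ of compact, connected, $\pi_1$-injective subsurfaces with $\bigcup_i S_i = \BH^2/\Gamma$ (if $\Gamma$ has cusps, taking the $S_i$ to cut each cusp deeper at each step), and set $\Gamma_i := \pi_1(S_i)$.  Each $S_i$ has nonempty boundary, so $\Gamma_i$ is a finitely generated free group of some rank $g_i$, and its limit set $\Lambda_{\Gamma_i}$ is a proper closed subset of the equator $\D\BH^2$.  Because the equator is not entirely contained in $\Lambda_{\Gamma_i}$, the complement $\D_\infty\BH^3 \setminus \Lambda_{\Gamma_i}$ is connected; its quotient by $\Gamma_i$ is the closed genus-$g_i$ surface obtained as the double of $\BH^2/\Gamma_i$ along its ideal boundary, and the only parabolics of $\Gamma_i$ (necessarily rank one, since $\Gamma_i \subset \PSL_2\BR$) contribute only annular pieces to the parabolic locus.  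It follows that $\BH^3/\Gamma_i$ is the interior of a genus-$g_i$ handlebody, so it has finitely generated fundamental group, a single topological end, and embeds into $\BS^3$.  Theorem \ref{thm:main} therefore shows that each $\BH^3/\Gamma_i$ is a geometric limit of hyperbolic knot complements.

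It remains to show that $\BH^3/\Gamma$ is a geometric limit of the sequence $\{\BH^3/\Gamma_i\}$.  Fix a basepoint $\tilde p \in \BH^2 \subset \BH^3$ and let $p_i \in \BH^3/\Gamma_i$ and $p \in \BH^3/\Gamma$ be its projections.  For any radius $R$, only finitely many $\Gamma$-translates of $\tilde p$ meet the ball $B(\tilde p, R) \subset \BH^3$; since $\bigcup_i \Gamma_i = \Gamma$, those translates all lie in $\Gamma_i$ for sufficiently large $i$, so the covering $\BH^3/\Gamma_i \to \BH^3/\Gamma$ restricts to an isometry between $B(p_i, R)$ and $B(p, R)$.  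This is precisely geometric convergence of $(\BH^3/\Gamma_i, p_i)$ to $(\BH^3/\Gamma, p)$, and Lemma \ref{lemma:silly} closes out the argument.  The main obstacle is the topological identification of $\BH^3/\Gamma_i$ as a handlebody embedded in $\BS^3$: this relies critically on the Fuchsian hypothesis, since it is exactly the non-cocompactness of the Fuchsian subgroup $\Gamma_i$ that forces $\Lambda_{\Gamma_i}$ to leave gaps on $\D\BH^2$ through which the two hemispheres of $\D_\infty\BH^3$ are joined, making the conformal boundary connected and the compactification a handlebody.
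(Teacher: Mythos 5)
Your proof is correct and follows essentially the same route as the paper: the compact case is excluded via Theorem \ref{yair} because $\BH^3/\Gamma$ is then Fuchsian with two convex cocompact ends, and in the open case one exhausts $\BH^2/\Gamma$ by compact $\pi_1$--injective subsurfaces $S_i$, notes that each $\BH^3/\pi_1(S_i)$ is an open handlebody to which Theorem \ref{thm:main} applies, checks geometric convergence of the covers, and concludes with Lemma \ref{lemma:silly}. You in fact supply more detail than the paper does (the Chabauty-style argument for geometric convergence of the increasing union, and the handling of possible parabolics in $\Gamma_i$), and all of it checks out.
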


At this point we would like to mention that the argument used to prove
Theorem \ref{thm:main} together with the remark after the proof of
Proposition \ref{prop:only-degenerate} imply that if $\BH^2/\Gamma$ is
a closed hyperbolic surface which admits an orientation preserving
involution without fixed points, then $\BH^3/\Gamma$ is the geometric
limit of a sequence of link complements where each link has two
components.

During the discussion of Corollary \ref{fuchsian} we observed that
Theorem \ref{thm:main} can also be used to prove that some hyperbolic
3--manifolds with infinitely generated fundamental group are geometric
limits of knot complements.  It may be that every one--ended
hyperbolic 3--manifold which embeds into the sphere is a geometric
limit of still one--ended hyperbolic 3--manifolds which embed into the
sphere and have finitely generated fundamental group.  Hence we ask:

\begin{quest}
Is it true that every one--ended hyperbolic 3--manifold which embedds
into $\BS^3$ is a limit of knot complements?  In other words, does
Theorem \ref{thm:main} holds for manifolds with infinitely generated
$\pi_1$?
\end{quest}

On the other hand, it could be that some hyperbolic 3--manifold $M$
which does not embed into the sphere is a geometric limit of
hyperbolic 3--manifolds which embed into the sphere.  This prompts the
following question:

\begin{quest}\label{question-last}
Is it true that every geometric limit of hyperbolic knot complements
embeds into $\BS^3$?
\end{quest}

The answer to question \ref{question-last} is most likely negative.


\providecommand{\bysame}{\leavevmode\hbox to3em{\hrulefill}\thinspace}
\providecommand{\href}[2]{#2}

\bigskip

\noindent Department of Mathematics, Brigham Young University
\newline \noindent
\texttt{jpurcell@math.byu.edu}

\bigskip

\noindent Department of Mathematics, University of Michigan
\newline \noindent
\texttt{jsouto@umich.edu}

\end{document}